\definecolor{cmyk}{cmyk}{.0,.4,.9,.5}								
\definecolor{orcidlogocol}{HTML}{A6CE39}
\tikzset{
  orcidlogo/.pic={
    \fill[orcidlogocol] svg{M256,128c0,70.7-57.3,128-128,128C57.3,256,0,198.7,0,128C0,57.3,57.3,0,128,0C198.7,0,256,57.3,256,128z};
    \fill[white] svg{M86.3,186.2H70.9V79.1h15.4v48.4V186.2z}
                 svg{M108.9,79.1h41.6c39.6,0,57,28.3,57,53.6c0,27.5-21.5,53.6-56.8,53.6h-41.8V79.1z M124.3,172.4h24.5c34.9,0,42.9-26.5,42.9-39.7c0-21.5-13.7-39.7-43.7-39.7h-23.7V172.4z}
                 svg{M88.7,56.8c0,5.5-4.5,10.1-10.1,10.1c-5.6,0-10.1-4.6-10.1-10.1c0-5.6,4.5-10.1,10.1-10.1C84.2,46.7,88.7,51.3,88.7,56.8z};
  }
}
\newcommand\orcidicon[1]{\href{https://orcid.org/#1}{\mbox{\scalerel*{
\begin{tikzpicture}[yscale=-1,transform shape]
\pic{orcidlogo};
\end{tikzpicture}
}{|}}}}
\renewcommand{\O}{\mathcal{O}}	
\newcommand{\T}{\mathcal{T}}\newcommand{\U}{\mathcal{U}}
\newcommand{\Eg}{\mathfrak{E}}\newcommand{\Gg}{\mathfrak{G}}
\newcommand{\Ig}{\mathfrak{I}}
\renewcommand{\AA}{{\mathbb{A}}} 
\newcommand{\CC}{{\mathbb{C}}}
\newcommand{\RR}{{\mathbb{R}}} 
\newcommand{\TT}{{\mathbb{T}}}
\newcommand{\As}{{\mathscr{A}}}\newcommand{\Bs}{{\mathscr{B}}}\newcommand{\Cs}{{\mathscr{C}}}
\newcommand{\Ds}{{\mathscr{D}}}
\newcommand{\Es}{{\mathscr{E}}}\newcommand{\Fs}{{\mathscr{F}}}\newcommand{\Gs}{{\mathscr{G}}}
\newcommand{\Is}{{\mathscr{I}}}\newcommand{\Js}{{\mathscr{J}}}\newcommand{\Ks}{{\mathscr{K}}} 
\newcommand{\Ls}{{\mathscr{L}}}	
\newcommand{\Ms}{{\mathscr{M}}}\newcommand{\Os}{{\mathscr{O}}}
\newcommand{\Rs}{{\mathscr{R}}}\newcommand{\Ss}{{\mathscr{S}}}
\newcommand{\Xs}{{\mathscr{X}}}
\newcommand{\Ys}{{\mathscr{Y}}}
\DeclareFontFamily{U}{rsfs}{\skewchar\font127 }
\DeclareFontShape{U}{rsfs}{m}{n}{%
   <5> <6> rsfs5
   <7> rsfs7
   <8> <9> <10> <10.95> <12> <14.4> <17.28> <20.74> <24.88> rsfs10
}{}
\DeclareSymbolFont{rsfs}{U}{rsfs}{m}{n} 
\DeclareSymbolFontAlphabet{\scr}{rsfs}
\newcommand{\Af}{\scr{A}}
\newcommand{\Tf}{\scr{T}}
\newcommand{\Xf}{\scr{X}}
\DeclareMathOperator{\ke}{Ker}
\DeclareMathOperator{\spa}{span} 
\DeclareMathOperator{\id}{Id} 
\DeclareMathOperator{\Ob}{Ob}
\DeclareMathOperator{\Hom}{Hom}
\DeclareMathOperator{\Aut}{Aut} 
\DeclareMathOperator{\Sp}{Sp}
\DeclareMathOperator{\ev}{ev}
\DeclareMathOperator{\rk}{Rk}
\renewcommand{\emph}{\textbf} 										
\newcommand{\cj}[1]{\overline{#1}}									
\newcommand{\ip}[2]{\langle #1\mid #2\rangle}						
\renewcommand{\iff}{\Leftrightarrow}								
\newcommand{\imp}{\Rightarrow}										
\newcommand{\st}{\ : \ }											
\newcommand{\dya}{\scaleobj{0.8}{\Join}}
\newcommand{\ddya}{\scaleobj{0.8}{\mathrlap{\hspace{0.4pt}\Join}\bigcirc}}
\newcommand{\Xddya}{\overleftrightarrow{\kern -2pt\Xf}_{\kern -2pt\As}^{\ddya}}
\newcommand{\Xdya}{\overleftrightarrow{\kern -2pt\Xf}_{\kern -2pt\As}^{\dya}}
\newcommand{\dpartial}{\partial\kern-4.5pt /}
\newcommand{\yad}{\ \scaleobj{0.6}{\pmb{|}} \kern-2.5pt \scaleobj{0.8}{\Join}}
\newcommand{\hlink}[2]{\href{#1}{\texttt{#2}}} 						
\newcommand{\xqedhere}[2]{%
  \rlap{\hbox to#1{\hfil\llap{\ensuremath{#2}}}}}
\newcommand{\xqed}[1]{%
  \leavevmode\unskip\penalty9999 \hbox{}\nobreak\hfill
  \quad\hbox{\ensuremath{#1}}}
\theoremstyle{plain}
\newtheorem{theorem}{Theorem}[section]							
\newtheorem{lemma}[theorem]{Lemma}
\newtheorem{proposition}[theorem]{Proposition}
\newtheorem{definition}[theorem]{Definition}
\theoremstyle{definition} 
\newtheorem{remark}[theorem]{Remark}
\numberwithin{equation}{section}  									
\title{\textbf{Spectral Theory for Non-full Commutative C*-categories}}
\author{
\normalsize  
\orcidicon{0000-0002-1387-9283} Paolo Bertozzini$^a$, 
\orcidicon{0000-0003-3128-6884} Roberto Conti$^b$,
\orcidicon{0000-0003-2575-7184} Wicharn Lewkeeratiyutkul$^c$
\orcidicon{0009-0000-7477-2992} Kasemsun Rutamorn$^d$  
\\  
\normalsize $^a$ \textit{Independent Researcher - Bangkok, Thailand}
\\ \normalsize 
$^b$  \textit{Dipartimento di Scienze di Base e Applicate per l'Ingegneria} 
\\
\normalsize \textit{Sapienza Universit\`a di Roma, Via A. Scarpa 16, I-00161 Roma, Italy}
\\ \normalsize 
$^c$ $^d$ \textit{Department of Mathematics and Computer Science, Faculty of Science} 
\\
\normalsize \textit{Chulalongkorn University, Bangkok 10330, Thailand} 
\\ 
\normalsize e-mail: 
\ 
\quad 
$^a$ \texttt{paolo.th@gmail.com} 
\quad 
$^b$ \texttt{roberto.conti@sbai.uniroma1.it} 
\\ \normalsize \phantom{e-mail:}
\quad 
$^c$ \texttt{Wicharn.L@chula.ac.th}
\quad 
$^d$ \texttt{kasemamorn270@hotmail.com} 
}
\date{\normalsize{
corrected version for publication: 08 July 2025
\\
second version: 18 February 2025
\quad 
first version: 20 October 2024
\quad 
started: 09 November 2023 
%
%
}}
\begin{document}

\maketitle


\begin{abstract} \noindent 
We extend the spectral theory of commutative C*-categories to the non-full case, introducing a suitable notion of spectral spaceoid providing a duality between a category of ``non-trivial'' $*$-functors of non-full commutative C*-categories and a category of Takahashi morphisms of ``non-full spaceoids'' (here defined). 
\\ 
As a byproduct we obtain a spectral theorem for a non-full generalization of  imprimitivity Hilbert \hbox{C*-bi}\-modules over commutative unital C*-algebras via continuous sections vanishing at infinity of a Hilbert \hbox{C*-line-bundle} over the graph of a homeomorphism between open subsets of the corresponding Gel'fand spectra of the C*-algebras. 

\medskip

\noindent
\emph{Keywords:} 
C*-category, Fell bundle, Spectrum. 

\smallskip

\noindent
\emph{MCS-2020:} 
46L87, 46M15, 46L08, 46M20, 16D90, 18F99

\end{abstract}

\tableofcontents 

\section{Introduction and Motivation}

Duality between ``algebra'' and ``geometry'' is still an essential mathematical theme of investigations at least since~\cite{Des37}. The specific correspondence between commutative unital C*-algebras and compact Hausdorff spaces that is central in this paper was initiated in the fundamental work of~\cite{GeNa43}; it was formulated in categorical duality form in~\cite{Ne69,Ne71} (see also~\cite{Se71}) and it is one of the leading ideological motivations behind the current development of non-commutative differential geometry~\cite{Co80,Co94}. 

\medskip 

Following initial studies by~\cite{Kap51,Fe60,Fe61} significant attempts to generalize such duality to the case of non-commutative C*-algebras (and their Hilbert C*-modules) were initially developed in~\cite{DH68,Ho72a,Ho72b} and further eleborated in~\cite{Va74a,Va74b,Ta79a,Ta79b} (see also~\cite{DG83,FD98,Ho11}).  

\medskip 

C*-categories were initially introduced in~\cite{GLR85} in view of important applications to the theory of superselection sectors~\cite{DR89} in algebraic quantum field theory, where their usage has been constantly expanding, and they have been further investigated in~\cite{Mi02}. 

\medskip 

In the general context of ``categorification'' (see the introduction of~\cite{BCLS20} for details and references), \hbox{C*-categories} are a ``horizontal categorification'' (i.~e.~many-object version) of the notion of \hbox{C*-algebra} and hence it is perfectly natural to investigate, in suitable commutative situations, the dual ``horizontal categorification'' of the notion of Gel'fand spectrum of a commutative C*-algebra. 

\medskip 

In a C*-category, the diagonal Hom-sets, i.e.~the Hom-sets between any object and itself, are unital C*-algebras. Likewise, in a C*-category the off-diagonal Hom-sets are Hilbert C*-bimodules over the corresponding diagonal C*-algebras. 
A C*-category is said to be \textit{commutative} if all the diagonal C*-algebras are commutative.

\medskip 

An extension of Gel'fand-Na\u\i mark duality to commutative C*-categories was announced in~\cite{BCL08a} and later presented in~\cite{BCL11}, under the assumption of fullness (imprimitivity of the Hom-sets bimodules), a result that continues to motivate some further (ongoing) developments in the study of spectral theory of non-commutative C*-algebras (see~\cite{BCP19} and forthcoming work). 

\medskip 

The specific purpose of our paper is to examine the effect of removal of the original fullness assumption in the spectral theory of commutative C*-categories~\cite{BCL11}. We will provide here a notion of non-full topological spectral spaceoids adapted to this more general situation. 
In short, the main result is a duality between the category of non-necessarily full commutative C*-categories, with \hbox{$*$-functors} satisfying a suitable non-degeneracy condition as morphisms, and the category of non-full spaceoids, as topological counterpart playing the role of spectra of commutative C*-categories (see below for more precise statements). 

\medskip 

For important technical reasons, we limit here the discussion of our duality to the category of those $*$-functors, between small commutative non-full C*-categories, that are ``non-degenerate'' (their induced pull-back preserves, on each Hom-set, the non-triviality of functionals). The removal of this restriction will be done in forthcoming work and we anticipate here, for the curious reader, that this will require a ``rethinking'' of some basic aspects of the usual Gel'fand spectral theory also in the case of commutative C*-algebras, whose ``spectra'' will have to be identified with Fell line-bundles over the Alexandroff extension of the usual Gel'fand spectra, with a zero-dimensional fiber over the point at infinity. 

\medskip 

In order to facilitate the reader with some limited familiarity with the subject (who might not want to enter into the many discouraging technical details in the proofs of this paper) we are going to present a rather broad motivational summary of the material here developed. 

\medskip 

C*-categories are an extremely natural and ubiquitous ``many-objects'' generalization of unital C*-algebras (norm closed \hbox{$*$-subalgebras} of bounded linear operators on Hilbert spaces): they correspond to norm closed families of bounded linear operators, potentially between \textit{several Hilbert spaces}, that are algebraically closed under the operations of composition and adjunction. 

\medskip 

Commutative (unital) complex C*-algebras $\As$ are always canonically isomorphic to algebras of continuous $\CC$-valued functions $C(\Sp(\As);\CC)$ defined on a compact Hausdorff topological space $\Sp(\As)$, their Gel'fand spectrum, that consists of the family of unital $\CC$-valued $*$-homomorphisms $\As\xrightarrow{\omega}\CC$ equipped with the \hbox{weak-$*$-to}\-pology. The Gel'fand transform $\As\xrightarrow{\Gg_\As} C(\Sp(\As),\CC)$ provides the canonical isomorphism associating to every  $x\in\As$ a function $\hat{x}:\Sp(\As)\to\CC$ specified by $\hat{x}:\omega\mapsto \omega(x)$.  
Similarly every compact Hausdorff topological space $X$ is canonically homeomorphic, via the evaluation transform $X\xrightarrow{\Eg_X}\Sp(C(X;\CC))$ to the Gel'fand spectrum of the commutative unital C*-algebra of continuous $\CC$-valued functions on $X$: every point $p\in X$ corresponds to an evaluation $*$-homomorphism $\ev_p: f\to f(p)$ for $f\in C(X;\CC)$.  
This celebrated Gel'fand-Na\u \i mark duality result is not only fundamental for all the developments of spectral theory of operators on Hilbert spaces, but also provides the starting point of non-commutative topology as the study of non-commutative C*-algebras. 

\medskip 

Similar spectral results,, in the spirit of Serre-Swan{\color{brown}'s} theorem, have been obtained (see~\cite{Ta79a,Ta79b} for details) for Hilbert C*-modules $\Ms_\As$ over commutative (unital) C*-algebras $\As$, whose ``spectrum'' consists of Hilbert bundles over the $\Sp(\As)$ with fibers over $\omega\in\Sp(\As)$ consisting of the $\CC$-Hilbert space $\Ms/\ke(\omega)$. In the particular case of the module $\As_\As$, the bundle is simply the trivial $\CC$-line bundle $\Sp(\As)\times\CC\to\Sp(\As)$ over the previous Gel'fand spectrum $\Sp(\As)$.   
Imprimitivity bimodules over $\As$ (the bimodules ${}_\As\Ms_\As$ that are invertible, modulo isomorphisms, under the composition provided by $\otimes_\As$, the tensor product of Hilbert C*-bimodules over $\As$) correspond to (possibly non-trivial) modules of continuous sections of arbitrary $\CC$-Hilbert-line-bundles over $\Sp(\As)$. 

\medskip 

In a C*-category $\Cs$, for every pair of objects $A,B$, the Hom-sets $\Cs_{AB}:=\Hom_\Cs(B;A)$ are naturally Hilbert C*-bimodules (that are imprimitivity bimodules whenever they satisfy the fullness condition), on the left over the unital C*-algebra $\Cs_{AA}$ and on the right over the unital C*-algebra $\Cs_{BB}$; as a consequence, whenever the unital C*-algebras $\Cs_{AA}$ are commutative, it is possible to envision a spectral theory that suitably merges the spectra of all the bimodules $\Cs_{AB}$. 
Under the additional technical assumption of fullness (hence imprimitivity) for the bimodules $\Cs_{AB}$, all of this was achieved in~\cite{BCL11}: the spectrum $\Sigma(\Cs)$ of a commutative full \hbox{C*-category} $\Cs$ is what we call a \textit{topological (full) spaceoid}: a saturated Fell $\CC$-line bundle $\Es$ over $\Xs$, the graph of a pair-groupoid (with the same objects of $\Cs$) of homeomorphisms between the Gel'fand spectra of the ``diagonal'' C*-algebras $\Cs_{AA}$. 
The portion $\Es|_{AB}$ of the bundle supported over the graph $\Xs_{AB}$ of the homeomorphism $\Sp(\Cs_{BB})\to\Sp(\Cs_{AA})$ reproduces the previous spectrum of the imprimitivity bimodule ${}_{\Cs_{AA}}(\Cs_{AB})_{\Cs_{BB}}$. 
The obtained result is a duality providing at the same time a horizontal (many-objects) categorification of the Gel'fand-Na\u\i mark duality for commutative unital C*-algebras and of the Takahashi version of Serre-Swan equivalence for Hilbert C*-modules over commutative unital C*-algebras \cite{Ta79b}. 

\medskip 

The technical condition of fullness allowed significant shortcuts and simplifications in the proofs of the previous duality that are satisfactorily eliminated in the present work. 
The spectra of commutative C*-categories $\Cs$ are here given by \textit{(non-full) topological spaceoids} that consist of saturated Fell $\CC$-line bundles $\Es\to\Xs$ over a topological groupoid $\Xs$ whose Hom-sets $\Xs_{AB}$ are graphs of homeomorphisms between a pair of open subsets of the Gel'fand spectra $\Sp(\Cs_{AA})$. 
Notice that the ``non-fulless'' of the topological spaceoid manifests in the possibly strict inclusion of some of the open subsets of the Gel'fand spectra: whenever the open sets coincide with the entire Gel'fand spectra, our results reproduce those of the already known spectral theory for full commutative C*-categories. 

\medskip 

The apparently elementary modification in the above definition of topological spaceoid does not give full justice to the increased complications that are unleashed by this generalization. 

\medskip 

First of all continuous sections of the $\CC$-line-bundle $\Es_{AB}\to\Xs_{AB}$ must be required to ``vanish at infinity'' in order to recover a correct off-diagonal Hom-set non-full bimodule $\Cs_{AB}$; this entails a new definition of the section functor $\Gamma$ associating to a topological spaceoid $\Es$ the C*-category $\Gamma(\Es)$ of its Hom-set continuous sections vanishing at infinity. 

\medskip 

More important and significant are the consequences for the spectrum functor $\Sigma$ associating to every non-full commutative C*-category $\Cs$ its ``spectral spaceoid'' $\Sigma(\Cs)$. Although $*$-functors $\omega:\Cs\to\CC$ on a full \hbox{C*-category} $\Cs$ have always non-trivial restrictions $\omega_{AB}:\Cs_{AB}\to\CC$ on every full C*-bimodule $\Cs_{AB}$, this property can fail in the case of non-full C*-categories; as a consequence the fiber $\Cs_{AB}/\ker(\omega_{AB})$ can be 0-dimensional. 

\medskip 

There are two different alternative choices that can be pursued at this point:
\begin{itemize}
\item
the most elegant and radical (that is not the one pursued in this work) consists in allowing 0-dimensional fibers $\Cs_{AB}/\ke(\omega_{AB})$ over trivial functionals $\omega_{AB}=0$ in defining spectral spaceoids as complex Fell bundles with fibers of variable dimension ``less than or equal to 1''. This move will complicate the nature of the tubular topology (see remark~\ref{rem: tubular}) of the total space of the Fell-bundle, that now must be defined via the well-known Dauns-Hofmann uniformities (see for example~\cite[definitions 1-2]{Va95} and the original references therein), since the bundle might fail to be locally trivial. Furthermore, the introduction of zero-dimensional fibers supported over zero-functionals, necessarily forces some significant conceptual changes in the description of spectral theory, propagating back even to the usual Gel'fand-Na\u\i mark spectrum of a commutative unital C*-algebra (where the Gel'fand spectrum will now additionally contain an isolated 0-dimensional fiber corresponding to the zero character!). The final spectral spaceoid for a non-full C*-cateogy will anyway be described as the \textit{zerification} (a canonical universal process of adding \textit{zero morphisms} with zero-dimensional fibers to every Hom-set) of the very same notion of spectral spaceoid that is defined in the present paper (that hence is an unavoidable preliminary step).   

\item
the more restrictive road (that is the one considered in the present paper) consists in allowing only the presence of 1-dimensional fibers in the definition of spectral spaceoids. This cleverly avoids the deep conceptual issues above, but the price to pay is the need to impose a ``non-degeneracy condition'' on the morphisms between non-full commutative C*-categories: although $*$-functors $\Cs\xrightarrow{\Phi}\Ds$ between full commutative C*-categories are automatically \textit{non-degenerate} (the $\Phi$-pull-back $\omega\circ\Phi$ of a $*$-functor $\omega:\Cs\to\CC$ that is non-trivial on a certain Hom-set $\Ds_{\Phi(A)\Phi(B)}$ is also a non-trivial functional on $\Cs_{AB}$) this is not necessarily true in the non-full situation. \footnote{
For example, for the only object-preserving $*$-functor $\Phi:\begin{bmatrix}\CC & 0 \\ 0 & \CC\end{bmatrix}\to\begin{bmatrix}\CC & \CC \\ \CC & \CC\end{bmatrix}$, non-triviality fails on off-diagonal Hom-sets.  
} 
The non-degeneracy condition on $*$-functors $\Cs\xrightarrow{\Phi}\Ds$ must be required in order for the their spectral counterpart $\Sigma(\Cs)\xleftarrow{\Sigma^\Phi}\Sigma(\Ds)$ to be well-defined as a Takahashi morphism of (Fell) line-bundles between the 1-dimensional spectral spaceoids. 
\end{itemize} 
 
We decided to avoid in this paper the introduction of the above-mentioned zerification process (that would have imposed a doubling of the size of the manuscript), postponing the delicate conceptual (and necessarily controversial) proposals to systematically introduce a zero-character in the spectral theory of operator algebras. As it can be appreciated from the explanation above, the ``non-triviality condition'' on $*$-functors is here temporarily introduced and it will naturally disappear as soon as the zerification process is applied (the missing target of the restriction $\omega\mapsto \omega_{AB}$ will be the canonically added zero-point in the Hom-set $\Xs_{AB}$).  

\medskip 

Since our result is a duality, every possible example of non-full C*-category is necessarily ``spectrally isomorphic'' to one of the non-full spaceoids abstractly defined here; practical examples will in particular cover all the cases considered in the previous paper~\cite{BCL11} with the added generality coming from the avoidance of the condition of fullness: a nice application is the spectral theorem for ``non-full imprimitivity'' Hilbert C*-bimodules treated in our last section. 

\medskip 

In view of the relevance of imprimitivity bimodules in Morita theory and K-theory, one can conjecture a possible interest of our results in generalizations of Morita groupoids and in categorified topological K-theory. 

\bigskip 

The structure of the paper, closely follows the original exposition of~\cite{BCL11}: after this introduction, section~\ref{sec: 2} and~\ref{sec: 3} respectively introduce the basic definitions of categories of commutative C*-categories and (non-full) spaceoids; section~\ref{sec: 4} and~\ref{sec: 5} respectively describe the section and spectral functors between the previous categories; sections~\ref{sec: 6} and~\ref{sec: 7} introduce the Gel'fand and the evaluation natural transforms; section~\ref{sec: 8} describes the final duality result; section~\ref{sec: 9} discusses a spectral theorem for certain ``non-full imprimitivity'' Hilbert C*-bimodules that is obtained as a byproduct of our duality for non-full C*-categories applied to the linking C*-category of the bimodules.

\section{The Category $\Af$ of Small Commutative C*-categories} \label{sec: 2}

We assume familiarity with the basic definitions of (small) category, functor and natural transformation; the reader can find detailed descriptions of all such category-theoretic concepts in any standard reference text, among them we suggest the recent~\cite{Le14} and~\cite{Ri16}. 
 
\medskip 

We only recall here the definition of duality (contravariant equivalence) between categories. 
\begin{definition}
A \emph{duality} of two categories $\Af$ and $\Tf$ is a pair of contravariant functors $\Tf\xrightarrow{\Gamma}\Af$ and $\Af\xrightarrow{\Sigma}\Tf$ such that $\Gamma\circ\Sigma$ and $\Sigma\circ\Gamma$ are naturally equivalent to the respective identity functors $I_{\Af}$ and $I_{\Tf}$ (this means that there exist natural transformation isomorphisms $I_{\Af}\xRightarrow{\Gg} \Gamma\circ\Sigma$ and $I_{\Tf}\xRightarrow{\Eg} \Sigma\circ\Gamma$).  
\end{definition}

\medskip 

We introduce here mostly notational conventions used in the paper. 

\medskip 

For a category $\Cs:=(\Cs,\circ,\iota)$, we will denote by $\Cs^0:=\Ob_\Cs$ its family of objects, by $\Cs^1:=\Hom_\Cs$ its family of morphisms; whenever necessary, $\Cs^1\xrightarrow{s,\ t}\Cs^0$, with  $s(x)\mapsfrom x\mapsto t(x)$, will respectively indicate the source and the target maps and, for all $A,B\in \Cs^0$, Hom-sets are written as $\Cs_{AB}:=\Hom_\Cs(B;A)$; furthermore $\Cs^0\xrightarrow{\iota}\Cs^1$, with $A\mapsto \iota_A$, indicates the identity and
$\Cs^1\times_{\Cs^0}\Cs^1:=\{(x,y) \ | \ s(x)=t(y)\}\xrightarrow{\circ}\Cs^1$, 
with $(x,y)\mapsto x\circ y$, the composition. 

\begin{definition}
An \emph{involutive category} (also known as a \textit{dagger category}) is a category $(\Cs,\circ,\iota)$ equipped with a contravariant functor $\Cs\xrightarrow{*}\Cs$ such that $(x^*)^*=x$, for all $x\in \Cs$, and $(\iota_A)^*=\iota_A$, for all objects $A\in\Cs^0$. 

\medskip 

A \emph{groupoid} is a category where every morphism is an isomorphism. \footnote{
Groupoids are special examples of inverse involutive categories where the involution is the inversion map $\gamma:x\mapsto x^{-1}$, for all $x\in\Cs^1$. 
}

\medskip 

A complex \emph{algebroid} \footnote{
This definition actually describes ``unital algebroids''; assuming that $\Cs$ is a ``non-unital category'' (a category without identities) one can obtain a similar definition of non-unital algebroid. 
} is a category $\Cs$ whose Hom-sets $\Cs_{AB}$, for all $A,B\in\Cs^0$, are equipped with a complex vector spaces structure making the composition $\CC$-bilinear. 
A complex \emph{involutive algebroid} is a complex algebroid that is also an involutive category with involution that is conjugate-linear. 

\medskip 

More generally, a complex \emph{(involutive) algebroid over an (involutive) category {\color{brown} $\Xs$}} is a ($*$-)functor $\Cs\xrightarrow{\pi}\Xs$ between (involutive) categories, such that each fiber $\pi^{-1}(x)$ is a $\CC$-vector space for all $x\in\Xs$, with composition that is fibrewise $\CC$-bilinear (and involution that is fibrewise $\CC$-conjugate linear). \footnote{
Notice, see later remark~\ref{rem: pair-g}, that (involutive) algebroids $\Cs$ are just (involutive) algebroids over the pair groupoid $\Cs^0\times\Cs^0$.} 
\end{definition}

From this point on we assume that the reader is familiar with some basic notions of operator C*-algebras. \footnote{
For reference one can utilize~\cite{FD98} or~\cite{Bl06} 
}  

\medskip 

The following definition originates in~\cite{GLR85}, see also~\cite{Mi02}. 
\begin{definition} 
A (small) \emph{C*-category} $\Cs$ is a (small) complex involutive algebroid that is equipped with a norm $\|\cdot\|:\Cs\to\RR$ that satisfies the following properties: 
\begin{itemize}
\item
$(\Cs_{AB},\|\cdot\|)$ is a Banach space, for all $A,B\in\Cs^0$, 
\item 
$\|x\circ y\|\leq\|x\|\cdot\|y\|, \quad \forall (x,y)\in\Cs^1\times_{\Cs^0}\Cs^1$,  
\item 
$\|x^*\circ x\|=\|x\|^2, \quad \forall x\in \Cs^1$, 
\item 
$x^*\circ x$ is positive in the C*-algebra $\Cs_{AA}$, for all $x\in\Cs_{BA}$. 
\end{itemize}
We say that the C*-category is \emph{commutative} if, for all objects $A\in\Cs^0$, the diagonal Hom-sets $\Cs_{AA}$ are commutative unital C*-algebras. 
The C*-category is \emph{full} if all the bimodules $\Cs_{AB}$ are full (and hence imprimitivity - see the third paragraph of remark~\ref{rem: -imp} for details). 
A C*-category $\Cs$ is said to be \emph{discrete} if $\Cs_{AB}$ is a trivial $\CC$-vector space, for all $A,B\in\Cs^0$ such that $A\neq B$. 
\end{definition}
\begin{remark}\label{rem: -imp}
Notice that the first three properties in the previous definition already imply that all the ``diagonal Hom-sets'' $\Cs_{AA}$ are unital C*-algebras, for all $A\in\Cs^0$ and hence the last positivity condition is meaningful. 

\medskip 

Furthermore, for all $A,B\in\Cs^0$ the ``off-diagonal Hom-sets'' $\Cs_{AB}$ are unital bimodules, left over the unital \hbox{C*-algebra} $\Cs_{AA}$ and right over the unital C*-algebra $\Cs_{BB}$, that are also equipped with a left $\Cs_{AA}$-valued inner product ${}_A\ip{x}{y}:=x\circ y^*$ and a right $\Cs_{BB}$-valued inner product $\ip{x}{y}_B:=x^*\circ y$, for all $x,y\in\Cs_{AB}$, that satisfy the following property: ${}_A\ip{x}{y}z=x\ip{y}{z}_B$, for all $(x,y^*,z)\in\Cs^1\times_{\Cs^0}\Cs^1\times_{\Cs^0}\Cs^1$. 

\medskip 

The Hilbert C*-bimodules $\Cs_{AB}$ might fail to be imprimitivity bimodules only because of the lack of fullness of the previous inner-products: for all $A,B\in\Cs^0$, 
${}_A\ip{\Cs_{AB}}{\Cs_{AB}}:=\cj{\spa\{{}_A\ip{x}{y} \ | \ x,y\in\Cs_{AB}\}}\subset\Cs_{AA}$ and 
$\ip{\Cs_{AB}}{\Cs_{AB}}_B:=\cj{\spa\{\ip{x}{y}_B \ | \ x,y\in\Cs_{AB}\}}\subset\Cs_{BB}$ are closed involutive ideals that do not necessarily coincide with the unital C*-algebras $\Cs_{AA}$ and $\Cs_{BB}$. 
See~\cite{BCL08} for further details.
\xqed{\lrcorner}
\end{remark}

\begin{remark}
For closed involutive ideals $\Is$ of a commutative unital C*-algebra $C(X)$, there is a well-known spectral theory (see for example~\cite[volume I, section 8.9]{FD98}): the ideal (as a symmetric \hbox{$C(X)$-bi}\-module) is isomorphic to the $C(X)$-bimodule of continuous $\CC$-functions in $C(X)$ vanishing on the complement $K^\Is:=X-O^\Is$ of a given open set 
$O^\Is\subset X$. 

\medskip 

We do not have here an immediate generalization of the previous result to the case of Hilbert bimodules ${}_{C(X)}\Ms_{C(Y)}$ over commutative unital C*-algebras, with inner products satisfying ${}_{C(X)}\ip{x}{y}z=x\ip{y}{z}_{C(Y)}$, for all $x,y,z\in \Ms$, 
\footnote{
Such a spectral theorem will be later obtained, in section~\ref{sec: 9}, as a byproduct of the spectral theory for the commutative linking \hbox{C*-category} of the bimodule $\Ms$. 
}
and hence our route to the spectral theorem for a commutative C*-category $\Cs$ will not consists in ``assembling'' individual Hom-set spectral theories of the bimodules $\Cs_{AB}$; still it is already possible to appreciate the presence of pairs $O_A^{AB}\subset X_A$ and $O_B^{AB}\subset X_B$ of open sets associated to the spectrum of the ideals 
${}_{\Cs_{AA}}\ip{\Cs_{AB}}{\Cs_{AB}}$ and $\ip{\Cs_{AB}}{\Cs_{AB}}_{\Cs_{BB}}$. 

\medskip 

The mutual relations between such families of open subsets uniquely determined by the spectral theory of the ideals generated by inner products, say for example between $O_B^{AB}$ and $O_B^{BC}$, is much more subtle and it will be later formally subsumed in definition~\ref{def: spaceoid} by the existence of a topological groupoid (the base $\Xs$ of our spectral spaceoid) whose $AB$-Hom-sets are span of source-target homeomorphisms $O^{AB}_A \simeq \Xs_{AB}\simeq O^{AB}_B$. 
\xqed{\lrcorner}
\end{remark}

\begin{remark}
Recall that a category $\Ss$ is a \emph{full subcategory} \footnote{
The notion of fullness of a sucategory should not be confused with the ``fullness'' of a C*-category and we will always use the term ``full-subcategory'' to avoid problems.
} of the category $\Cs$ if $\Ss\subset\Cs$ is a subcategory (it is a subset closed by all the operations and sources/targets in $\Cs$) and if $\Ss_{AB}=\Cs_{AB}$, for all $A,B\in\Ss^0$. 

\medskip 

For later usage, we will say that two (full) subcategories $\Ss_1$ and $\Ss_2$ of a C*-category $\Cs$ are in \emph{discrete position} whenever $\Cs_{AB}=\{0\}$, for all $A\in\Ss_1^0$ and $B\in\Ss_2^0$. We use the same terminology for a pair of objects in $\Cs$. 
\xqed{\lrcorner}
\end{remark}

We provide the definition of objects in the first category $\Af$ of our duality. 
\begin{definition}\label{def: Af}
The \emph{category $\Af$ of small commutative C*-categories} consists of the following: 
\begin{itemize}
\item
a class of objects $\Af^0$ that are small commutative C*-categories,
\item
for every pair of objects $\Cs,\Ds\in\Af^0$, a set of morphisms $\Hom_\Af(\Cs;\Ds)$ consisting of all the covariant $*$-functors $\Cs\xrightarrow{\Phi:=(\Phi^0,\Phi^1)}\Ds$ that are: 
\begin{itemize}
\item
\emph{object bijective}: $\Phi^0:\Cs^0\to\Ds^0$ is a bijective function,  
\item 
\emph{non-degenerate}: for every $*$-functor $\Ds\xrightarrow{\omega}\CC$, for every $A,B\in\Cs^0$
\begin{equation}\label{eq: non-trivial}  
\omega|_{\Ds_{\Phi^0(A)\Phi^0(B)}}\neq 0 \ \imp \ \omega\circ\Phi^1|_{\Cs_{AB}}\neq 0,
\end{equation} 
\end{itemize} 
\item 
composition of morphisms $\Cs\xrightarrow{\Psi}\Ds\xrightarrow{\Phi}\Es$ that is the usual composition of functors: 
\begin{equation*}
\Phi\circ\Psi=(\Phi^0,\Phi^1)\circ(\Psi^0,\Psi^1):=(\Phi^0\circ\Psi^0,\Phi^1\circ\Psi^1),
\end{equation*}
\item 
identity morphism $\Cs\xrightarrow{\iota_\Cs}\Cs$, for any object $\Cs\in\Af^0$, consisting of the identity covariant $*$-functor: 
\begin{equation*}
\iota_\Cs:=(\id_{\Cs^0},\id_{\Cs^1}). 
\end{equation*}
\end{itemize}
\end{definition}

\begin{remark}
Notice the omission of the fullness condition on the objects of $\Af$ compared to the original definition utilized in~\cite[section~2]{BCL11}. 

\medskip 

We stress the importance of the additional ``non-triviality'' condition~\eqref{eq: non-trivial} for $*$-functors between C*-categories, that was automatically satisfied in the case of full C*-categories, but that now needs to be explicitly stated. 

\medskip 

Omitting the non-degeneracy condition would force a significant and extremely interesting generalization of spectral theory, where the Gel'fand spectra of unital commutative C*-algebras must necessarily also include the trivial zero characters and spectral spaceoids of non-full C*-categories must contain zero-dimensional fibers. 

\medskip 

We decided to postpone these more delicate developments, that require an in-depth discussion of ``Alexandroff-zerifications'' of Fell-bundles, to a separate forthcoming paper. 
\xqed{\lrcorner}
\end{remark}

\bigskip

To prepare the ground for the next section, we need to mention here material on Fell bundles 
(over topological groupoids is sufficient here), 
a further generalization of the notion of C*-category, see~\cite{Ku98} 

\begin{definition}
A \emph{topological (involutive) category} is an (involutive) category $(\Cs,\circ,\iota,*)$ whose families of objects $\Cs^0$ and morphisms $\Cs^1$ are equipped with a topology making composition $\circ:\Cs^1\times_{\Cs^0}\Cs^1\to \Cs^1$, identity $\iota:\Cs^0\to\Cs^1$ (involution $*:\Cs^1\to\Cs^1$) and source/target maps $\Cs^1\xrightarrow{t,\ s}\Cs^0$ continuous. 

\medskip 

In the case of \emph{topological (involutive) algebroids} we also assume that addition $\Cs^1\times\Cs^1\xrightarrow{+}\Cs^1$ and scalar multiplication $\CC\times\Cs^1\xrightarrow{\cdot}\Cs^1$ are continous. 

\medskip 

In the more general case of \emph{topological (involutive) algebroids over topological (involutive) categories} $\Cs\xrightarrow{\pi}\Xs$, we assume the (involutive) functor $\pi$ to be continuous between the topological (involutive) categories $\Cs$ and $\Xs$ in addition to the continuity of addition and scalar multiplication in $\Cs$. 
\end{definition}
\begin{remark}\label{rem: pair-g}
Every (involutive) category $(\Cs,\circ,\iota,*)$ can equivalently be described as a bundle $\Cs^1\xrightarrow{\pi}\Cs^0\times\Cs^0$, where $\pi: x\mapsto(t(x),s(x))$ is a covariant (involutive) functor from the (involutive) category $\Cs$ to the pair groupoid $\Cs^0\times\Cs^0$, with fibers $\Cs_{AB}=\pi^{-1}((A,B))$, for all $A,B\in\Cs^0$. 

\medskip 

Whenever the (involutive) category $\Cs$ is topological, equipping the pair groupoid $\Cs^0\times\Cs^0$ with the product topology, will provide a topological bundle $\Cs\xrightarrow{\pi}\Cs^0\times\Cs^0$, with a continuous projection (involutive) functor $\pi$. 

\medskip 

A C*-category $\Cs$, with the topology induced by its norm, is always a topological involutive categorical bundle over the discrete pair groupoid $\Cs^0\times\Cs^0$. 
Fell bundles over groupoids, are a vast generalization of such structure, where the topological product pair groupoid $\Cs^0\times\Cs^0$ is substituted by an arbitrary topological groupoid $\Xs$. 
\xqed{\lrcorner}
\end{remark}

In order to define Fell bundles, we need to introduce first Banach bundles. 
The definition of Banach bundle is usually based on the notion of ``Dauns-Hofmann Banach bundle'', where the norm is only assumed to be upper semicontinuous, but this kind of generality is not actually necessary for our specific needs in this paper and hence we actually only consider ``Fell Banach bundles'' (see~\cite[section~1]{DG83} for a complete description of the situation). 

\begin{definition}\label{def: Bbundle}
A \emph{Fell type Banach bundle} $(\Es,\pi,\Xs)$ is a continuous surjective open map  $\Es\xrightarrow{\pi}\Xs$, between topological spaces $\Es$, $\Xs$, 
whose fibers $\Es_p:=\pi^{-1}(p)$ are Banach spaces, for all $p\in \Xs$, and such that the following properties are satisfied: 
\begin{itemize}
\item
addition $\Es\times_\Xs\Es\xrightarrow{+}\Es$ is continuous, where $\Es\times_\Xs\Es:=\left\{(e,f)\in \Es\times\Es \ | \ \pi(e)=\pi(f)\right\}$, 
\item
scalar multiplication $\CC\times\Es\xrightarrow{\cdot}\Es$ is continuous, 
\item
the zero section $\Xs\xrightarrow{\zeta}\Es$ given by $p\mapsto \zeta_p:=0_{\Es_p}$ is continuous, 
\item 
the norm $\Es\xrightarrow{\|\cdot\|}\RR$ is continuous, \footnote{
We stress again that whenever we assume continuity of the norm for the usual Euclidean topology on $\RR$ we obtain the definition of \emph{Banach bundle of Fell type}; 
assuming instead the continuity of the norm for the upper semicontinuity topology on $\RR$ is typical of the more general \emph{Banach bundle of Dauns-Hofmann type} see~\cite[definition~1.1]{DG83}. 
} 
\item 
for all $p\in \Xs$, the norm topology on $\Es_p$ coincides with the subspace topology, 
\item 
for all $p\in\Xs$ and every neighborhood $\U$ of $\zeta_p$, there exists $\epsilon >0$ and an open set $\O\in\Xs$ such that 
\begin{equation}\label{eq: tubular}
\T^\Es_{\epsilon,\O}:=\pi^{-1}(\O)\cap\{e\in\Es \ | \ \|e\|<\epsilon\}\subset \U.
\end{equation} 
\end{itemize}
A (continuous) \emph{section} of the Banach bundle $(\Es,\pi,\Xs)$ is a (continuous) map $\sigma:\Xs\to\Es$ such that $\pi\circ\sigma=\id_\Xs$. 

We denote the family of continuous sections by $\Gamma(\pi)$ and use the notation $p\mapsto \sigma_p\in\Es$, for $p\in \Xs$ and $\sigma\in\Gamma(\pi)$. 

\medskip 

A (Banach) bundle is $\Gamma$-\emph{saturated} if, for all $e\in \Es$, there exists a continuous section $\sigma\in\Gamma(\pi)$ such that $\sigma_{\pi(e)}=e$. 
\end{definition}

Following~\cite{Ta79a,Ta79b}, we introduce a notion of morphism between Banach bundles.

\begin{definition}
A \emph{Takahashi morphism between Banach bundles} $(\Es^1,\pi^1,\Xs^1)\xrightarrow{(f,F)}(\Es^2,\pi^2,\Xs^2)$ consists of: 
\begin{itemize}
\item
a continuous map $\Xs^1\xrightarrow{f}\Xs^2$, 
\item
a fibre-preserving, fibrewise $\CC$-linear, continuous map $\Es^1\xleftarrow{F}f^\bullet(\Es^2)$, where $f^\bullet(\Es^2)$ denotes here the total space of the standard $f$-pull-back of the bundle $\Es^2$, defined as $f^\bullet(\Es^2):=\{(p,e)\in \Xs^1\times\Es^2 \ | \ f(p)=\pi^2(e) \}$. 

\end{itemize}
\end{definition}

\begin{remark}
Notice that for every $\sigma\in\Gamma(\pi)$, the map $F_\sigma:e\mapsto e+\sigma(\pi(e))$ is an homeomorphism of the total space $\Es$ (that is fiber-preserving, but in general not fibrewise linear). Furthermore the addition map $+:\Es\times_\Xs\Es\to\Es$ (defined on the Whitney sum of $\Es$ with itself) and the scalar multiplication $\cdot: \CC\otimes_\Xs \Es\to \Es$ (when factorized via the fibrewise $\CC$-tensor product) are examples of Takahashi morphisms.  
\xqed{\lrcorner}
\end{remark}

\begin{remark}\label{rem: tubular}
The technical condition~\eqref{eq: tubular} above, in the case of $\Gamma$-saturated Banach bundles, is equivalent to the following: 
the topology of the total space $\Es$ coincides with the \emph{tubular topology} that is the topology induced by the family of \emph{tubular neighborhoods} of $e_o\in\Es$ of the form 
$\T^\Es_{\epsilon, \O}(\sigma):=\pi^{-1}(\O)\cap \left\{ e \ | \ \|e-\sigma_{\pi(e)}\|<\epsilon \right\}$, where $\O$ is a neighborhood of $\pi(e_o)$ in $\Xs$, $\epsilon >0$ and $\sigma\in\Gamma(\pi)$ with $\sigma_{\pi(e_o)}=e_o$. 

\medskip 

The existence of a tubular topology induced by bases of tubular neighborhoods of $e_o\in\Es$ as described above is assured, for example, by the conditions in~\cite[theorem~3]{Va95}. 
\xqed{\lrcorner}
\end{remark}

\medskip 

In the present work, we will exclusively deal with (Fell type) Banach bundles $(\Es,\pi,\Xs)$ over locally compact Hausdorff spaces $\Xs$. Whenever $\Xs$ is not already compact it is natural to consider, in place of continuous sections $\Gamma(\Es)$, the more restrictive set $\Gamma_o(\Es)$ of continuous sections vanishing at infinity. 
\begin{definition}\label{def: sec-to-0}
Given a Banach bundle $(\Es,\pi,\Xs)$ on a locally compact Hausdorff space $\Xs$ we say that a continuous section $\sigma\in\Gamma(\Es)$ is \emph{vanishing at infinity} if: 
for every $\epsilon>0$ and every compact set $\Ls\subset \Xs$, there exists a compact set $\Ks\subset \Xs$ such that $\sigma(\Xs-\Ks)\subset \T^\Es_{\epsilon,(\Ls-\Ks)}$.\footnote{
Equivalently, $\lim_{p\to\infty}\|\sigma(p)\|_\Es=0$: for every $\epsilon>0$ there exists $\Ks_\epsilon$ compact, such that $p\in (\Xs-\Ks_\epsilon)\imp\|\sigma(p)\|_\Es<\epsilon$. 
} 

\medskip 

The family of continuous sections of $\Es$ vanishing at infinity is denoted by $\Gamma_o(\Es)$. 

\medskip 

We say that $\Es$ is $\Gamma_o$-saturated if, for all $e\in\Es$ there exists $\sigma\in\Gamma_o(\Es)$ such that $\sigma(\pi(e))=e$. 
\end{definition}

Whenever the Banach bundle $(\Es,\pi,\Xs)$ is $\Gamma_o(\pi)$-saturated, there might potentially be two different topologies on the total space $\Es$. 
One is the original tubular topology induced by the family of continuous sections $\Gamma(\pi)$: that is the topology of $(\Es,\pi,\Xs)$ considered as a saturated Banach bundle.  
The other is the tubular topology induced by the continuous sections vanishing at infinity $\Gamma_o(\pi)$. 
These two topologies in general might differ, but they actually coincide for $\Xs$ Hausdorff as described in the following technical lemma. 
\begin{lemma}\label{lem: lcpr} 
Let $(\Es,\pi,\Xs)$ a $\Gamma_o(\pi)$-saturated Banach bundle. 
If the base space $\Xs$ is locally compact and pre-regular, whenever both tubular topologies induced by $\Gamma(\pi)$ and $\Gamma_o(\pi)$ exist,\footnote{
For this is enough that the conditions in~\cite[theorem 3]{Va95} are satisfied for both $\Gamma(\pi)$ and $\Gamma_o(\pi)$.
} 
they necessarily coincide.
\end{lemma}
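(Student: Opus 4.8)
The plan is to compare the two families of basic tubular neighborhoods directly and to show that each refines the other at every point. Write $\tau_\Gamma$ and $\tau_{\Gamma_o}$ for the tubular topologies on $\Es$ induced respectively by $\Gamma(\pi)$ and $\Gamma_o(\pi)$, whose basic neighborhoods of a point $e_o\in\Es$ (with $p_o:=\pi(e_o)$) are the tubes $\T^\Es_{\epsilon,\O}(\sigma)=\pi^{-1}(\O)\cap\{e \mid \|e-\sigma_{\pi(e)}\|<\epsilon\}$ of remark~\ref{rem: tubular}, taken with $\sigma\in\Gamma(\pi)$, respectively $\sigma\in\Gamma_o(\pi)$, subject to $\sigma_{p_o}=e_o$. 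One inclusion is immediate: since $\Gamma_o(\pi)\subseteq\Gamma(\pi)$, every basic $\tau_{\Gamma_o}$-tube is in particular a basic $\tau_\Gamma$-tube, so $\tau_{\Gamma_o}\subseteq\tau_\Gamma$. The whole content of the lemma is therefore the reverse inclusion $\tau_\Gamma\subseteq\tau_{\Gamma_o}$.

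To prove it, I would fix a $\tau_\Gamma$-open set $U$ and a point $e_o\in U$, and use the basis property to select $\sigma\in\Gamma(\pi)$ with $\sigma_{p_o}=e_o$, a radius $\epsilon>0$ and a neighborhood $\O$ of $p_o$ with $\T^\Es_{\epsilon,\O}(\sigma)\subseteq U$. The goal is then to manufacture a section $\tau\in\Gamma_o(\pi)$ with $\tau_{p_o}=e_o$, together with $\epsilon'>0$ and a neighborhood $\O'$ of $p_o$, such that $\T^\Es_{\epsilon',\O'}(\tau)\subseteq\T^\Es_{\epsilon,\O}(\sigma)\subseteq U$; producing such a $\Gamma_o$-tube through each point of $U$ exhibits $U$ as $\tau_{\Gamma_o}$-open.

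The crucial step is a cutoff construction. Using local compactness and pre-regularity of $\Xs$, I would invoke the corresponding (locally compact) Urysohn lemma to obtain a continuous $\phi\colon\Xs\to[0,1]$ with compact support $\supp\phi\subseteq\O$ and $\phi\equiv 1$ on some neighborhood $\O'\subseteq\O$ of $p_o$, and set $\tau:=\phi\cdot\sigma$, the fibrewise scalar multiple. Continuity of the bundle scalar multiplication makes $\tau$ a continuous section; it vanishes at infinity because $\|\tau_q\|=0$ for $q\notin\supp\phi$, which by the footnote to definition~\ref{def: sec-to-0} is exactly $\Gamma_o$-membership; and $\tau_{p_o}=\phi(p_o)\cdot\sigma_{p_o}=e_o$. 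Since $\phi\equiv 1$ on $\O'$, one has $\tau_q=\sigma_q$ for every $q\in\O'$. Taking $\epsilon':=\epsilon$, any $e\in\T^\Es_{\epsilon,\O'}(\tau)$ satisfies $\pi(e)\in\O'\subseteq\O$ and $\|e-\tau_{\pi(e)}\|<\epsilon$, whence $\|e-\sigma_{\pi(e)}\|<\epsilon$ because $\tau$ and $\sigma$ agree over $\O'$; thus $e\in\T^\Es_{\epsilon,\O}(\sigma)$, giving the desired containment.

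The main obstacle is precisely the existence of this scalar cutoff $\phi$, and this is where the hypotheses of local compactness and pre-regularity are spent. The only delicate point is running the locally compact Urysohn argument in the merely pre-regular (possibly non-Hausdorff) setting: I expect to handle this by passing to the Kolmogorov quotient of $\Xs$, which is locally compact Hausdorff, on which continuous scalar functions factor, and pulling $\phi$ back along the quotient map. The $\Gamma_o$-saturation hypothesis enters only to guarantee that the $\Gamma_o$-tubes genuinely form neighborhood bases at every point, so that $\tau_{\Gamma_o}$ is a legitimately defined topology; once $\phi$ is in hand, the remaining verifications are the routine continuity and triangle-inequality checks indicated above.
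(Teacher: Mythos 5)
Your proof is correct and follows essentially the same route as the paper: one inclusion of topologies is noted to be trivial since $\Gamma_o(\pi)\subseteq\Gamma(\pi)$, and the reverse inclusion is obtained by multiplying the given continuous section $\sigma$ through $e_o$ by a Urysohn-type scalar cutoff $\phi$ equal to $1$ near $\pi(e_o)$, so that $\phi\cdot\sigma\in\Gamma_o(\pi)$ agrees with $\sigma$ on a neighborhood and the resulting $\Gamma_o$-tube sits inside the given $\Gamma$-tube. The only divergence is where the cutoff comes from --- the paper applies Urysohn on the normal Alexandroff extension $\hat{\Xs}$ to get $\phi$ with $\phi|_K=1$ and $\phi(\infty)=0$, whereas you take $\phi$ compactly supported directly on $\Xs$ (via the Kolmogorov quotient) --- and your compact-support choice is, if anything, slightly more robust, since it makes $\phi\cdot\sigma$ vanish at infinity without any implicit boundedness assumption on $\|\sigma\|$.
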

\begin{proof}
It is known that $\Xs$ locally compact pre-regular implies that the Alexandroff extension $\hat{\Xs}:=\Xs\uplus\{\infty\}$ is compact pre-regular and hence normal. 

\medskip 

Any tubular neighborhood of a point $e\in\Es$ for the tubular topology induced by the family $\Gamma_o(\pi)$ is necessarily a tubular neighborhood for the tubular topology induced by the family $\Gamma(\pi)$. 

\medskip 

Let $e\in\Es$ and $\sigma\in\Gamma(\pi)$ a continuous section such that $\sigma(\pi(e))=e$. 
Consider, for $\epsilon>0$ and $O\subset \Xs$ an open set such that $\pi(e)\in O$, the basic tubular neighborhood $U^\epsilon_O:=\pi^{-1}(O)\cap \{f\in \Es \ | \ \|f-\sigma(\pi(f))\|<\epsilon\}$ of $e$ in the topology induced by $\Gamma(\pi)$. 

Since $\Xs$ is locally compact pre-regular, there exists a compact neighborhood $K$ of $\pi(e)$ such that $K\subset O$. Since $\hat{\Xs}$ is normal, by Urysohn lemma, we can find a continuous function $\phi:\hat{\Xs}\to \RR$ such that $\phi|_K$ is constant equal to $1_\RR$ and $\phi(o)=0_\RR$. It follows that the new section $\phi\cdot \sigma$ is a continuous section vanishing to zero at infinity in $\Gamma_o(\pi)$ that on the compact neighborhood $K$ coincides with $\sigma$: $\phi\cdot\sigma|_K=\sigma_K$ and hence the $\Gamma_o(\pi)$ tubular neigborhood $\pi^{-1}(K)\cap\{f\in \Es \ | \ \|f-\phi\cdot\sigma(\pi(f))\|<\epsilon\}$ is contained in the previous $\Gamma(\pi)$ neighborhood $U^\epsilon_O$ that implies that $U^\epsilon_O$ is also a neighborhood of $e$ in the tubular topology induced by $\Gamma_o(\pi)$. 
\end{proof}

\begin{remark}
Any Banach line-bundle is actually a locally trivial normed vector bundle. 
\xqed{\lrcorner}
\end{remark}

We will use the following definition of unital Fell bundle over a groupoid~\cite{Ku98}.  
\begin{definition}\label{def: Fell} 
A \emph{unital Fell bundle over a topological groupoid} $\Xs$ is a Banach bundle $(\Es,\pi,\Xs)$ that is also a topological involutive $\CC$-algebroid over the topological groupoid $\Xs$ and that also satisfies the following: 
\begin{itemize} 
\item 
$\|e_1\circ e_2\|\leq \|e_1\|\cdot\|e_2\|$, for all $(e_1,e_2)\in\Es\times_\Xs\Es$, 
\item 
$\|e^*\circ e\|=\|e\|^2$, for all $e\in\Es$, 
\item 
for all $e\in\Es_{\pi(e)}$, the element $e^*\circ e$ is positive in the C*-algebra $\Es_{\pi(e^*\circ e))}$. 
\end{itemize}
A Fell bundle is \emph{rank-one} (respectively \emph{rank-zero}) if $\Es_{p}$ is one-dimensional (zero-dimensional) $\forall p\in\Xs$.

\medskip 

A \emph{Takahashi morphism of Fell bundles} $(\Es,\pi,\Xs)\xrightarrow{(f,F)}(\Fs,\nu,\Ys)$   
over topological groupoids $\Xs$ and $\Ys$, consists of the following two ingredients: 
\begin{itemize}
\item
a continuous $*$-functor $\Xs\xrightarrow{f}\Ys$, 
\item 
a continuous fibre-preserving fibrewise-linear $*$-functor $\Es\xleftarrow{F}f^\bullet(\Fs)$. 
\end{itemize}
\end{definition}

\section{The Category $\Tf$ of Non-full Spaceoids} \label{sec: 3}

We present here a version of topological spaceoid (introduced in~\cite[definition~3.2]{BCL11} for full commutative C*-categories) that is adapted to the case of non-full commutative \hbox{C*-cat}\-egories. 
The main visible difference consists in the replacement of the product topological groupoid $\Delta_\Xs\times\Rs_\Os$ (that is a groupoid fibered over the discrete pair groupoid $\Rs_\Os$ with fibers that are all isomorphic to the trivial groupoid $\Delta_\Xs$ for a compact Hausdorff space $\Xs$) with a more general groupoid $\Xs$ fibered over $\Rs_\Os$. 

\medskip 

The actual motivation for the present definition comes from the construction of the ``spectral non-full spaceoid'' of a commutative C*-category that will be considered in section~\ref{sec: 5}. 
\begin{definition} \label{def: spaceoid} 
A \emph{non-full spaceoid (over the non-full base spaceoid $\Xs_{\theta,\Os}$)} is a saturated unital Fell line-bundle $(\Es,\pi,\Xs_{\theta,\Os})$, where the (non-full) base spaceoid $\Xs_{\theta,\Os}:=(\Xs\xrightarrow{\theta}\Rs_\Os)$ is a topological groupoid $\Xs$ fibered over the discrete pair groupoid $\Rs_\Os:=\Os\times\Os$ of a discrete set $\Os$,~\footnote{
With some abuse of notation we denote by $AB:=(A,B)\in\Rs_\Os$ the elements of the discrete pair groupoid of $\Os$. 
}
with fibers $\Xs_{A}:=\Xs_{AA}$, for $A\in\Os$, that are compact Hausdorff trivial groupoids of identities, and source/target maps $\Xs_A\xleftarrow{t_{AB}}\Xs_{AB}\xrightarrow{s_{AB}}\Xs_B$ that are homeomorphisms from $\Xs_{AB}:=\theta^{-1}(AB)$ to open subsets.~\footnote{
The source and target of $p\in\Xs_{AB}$ are the unique identities $s(p)\in \Xs_B$ and $t(p)\in\Xs_A$ that are left/right composable with $p$.
} 
\end{definition}
Since the definition above is fundamental, we give a more detailed description of its ingredients:
\begin{itemize}
\item 
$\Os$ is a discrete topological space and $\Rs_\Os:=\Os\times\Os$ denotes the discrete pair groupoid with objects in $\Os$, 
\item 
$\Xs$ is a topological groupoid, $\Xs\xrightarrow{\theta}\Rs_\Os$ is a continuous $*$-functor with fibers $\Xs_{AB}:=\theta^{-1}(AB)$, $AB\in\Rs_\Os$,\footnote{ 
Recall that in a groupoid $\Xs$ the involution $*$ is the inverse map $p\mapsto p^{-1}$ of morphisms $p\in \Xs^1$. 
} 
\item 
$\Xs_A:=\Xs_{AA}$ is a compact Hausdorff topological groupoid consisting only of identity isomorphisms,\footnote{
Here, with abuse of notation, we identify the compact Hausdorff space $\Xs_A$ of objects of the group $\Xs_{AA}$ with $\Xs_{AA}$ itself: every object $p\in\Xs_A$ corresponding to its identity $\iota(p):=(p,p)\in\Xs_{AA}$. 
} 
\item 
the sources and target maps $\Xs_A\xleftarrow{t_{AB}}\Xs_{AB}\xrightarrow{s_{AB}}\Xs_B$ are homeomorphisms onto open subsets, 
\item 
$\Es\xrightarrow{\pi}\Xs$ is a saturated unital Fell line-bundle 
\footnote{
Recall that in our case (as for every Banach line-bundle) the Banach bundle $(\Es,\pi,\Xs)$ is of Fell-type, as in definition~\ref{def: Bbundle}.}
over the topological groupoid $\Xs$ (see definition~\ref{def: Fell}). 
\end{itemize}

The following remark is important to formalize the notation that we will be subsequently using. 
\begin{remark}\label{rem: p}
With a mild abuse of notation, let $p_{AB}\in\Xs_{AB}$ be an arbitrary point of the $AB$-Hom-set of the groupoid $\Xs$; its unique source and target elements will be written as $\Xs_A\ni p_A\xleftarrow{t_{AB}}p_{AB}\xrightarrow{s_{AB}}p_B\in\Xs_B$. 

\medskip 

More generally, whenever $p_{AC}\in\Xs_{AB}\circ\Xs_{BC}\subset \Xs_{AC}$, from definition~\ref{def: spaceoid}, we have that there exists a unique pair of elements, denoted as $(p_{AB},p_{BC})\in\Xs_{AB}\times\Xs_{BC}$, such that $p_{AC}=p_{AB}\circ p_{BC}$.  

\medskip 

Similarly, whenever $p_{AB}\in\Xs_{AB}$, there exists a unique element $p_{BA}\in\Xs_{BA}$ such that $p_{BA}=(p_{AB})^*$, furthermore $p_{AB}\circ p_{BA}=p_A$ and $p_{BA}\circ p_{AB}=p_B$. 

\medskip 

As a consequence, for any given point $p_{XY}\in\Xs=\bigcup_{AB\in\Rs_\Os}{\Xs_{AB}}$ there exists a unique maximal pair subgroupoid $\Xs^p\subset\Xs$ (here denoted on purpose with a sup-script $p$, omitting the ``object indexes'' of the point $p_{XY}\in\Xs_{XY}$) and, for $A,B\in\Os^p:=\theta(\Xs^p)$, the point $p_{AB}$ is the unique element in the singleton set $\Xs^p_{AB}:=\Xs^p\cap \Xs_{AB}$. 

\medskip 

In practice, whenever the same letter, say $p$, is used to denote points $p_{AB}$ and $p_{CD}$ in different Hom-sets of the groupoid $\Xs$, we always assume that they are related by at least one chain of, necessarily unique, composable elements $p_{AB}=p_{AX_1}\circ\cdots\circ p_{X_nB}$. Such notational choice is self-consistent due to the ``triviality of holonomies'': whenever we have two chains of composable elements $p_{X_1X_2}\circ\cdots\circ p_{X_{n-1}X_n}$ and $q_{Y_1Y_2}\circ\cdots\circ q_{Y_{m-1}Y_m}$ such that $p_{X_1X_2}=q_{Y_1Y_2}$, if $X_{n-1}X_n=Y_{m-1}Y_m$, we necessarily have $p_{X_{n-1}X_n}=q_{Y_{m-1}Y_m}$.  
\xqed{\lrcorner}
\end{remark}

The relevant notion of morphism is similar to that used in~\cite[page 589]{BCL11} that was essentially inspired by~\cite{Ta79a,Ta79b} applied to the case of Fell line-bundles. 

\begin{definition}\label{def: m-spaceoid}
A \emph{morphism of non-full spaceoids} $(\Es^1,\pi^1,\Xs^1_{\theta^1,\Os^1})\xrightarrow{(f,F)}(\Es^2,\pi^2,\Xs^2_{\theta^2,\Os^2})$ 
is actually a special kind of Takahashi morphism of Fell bundles $(\Es^1,\pi^1,\Xs^1_{\theta^1,\Os^1})\xrightarrow{(f,F)}(\Es^2,\pi^2,\Xs^2_{\theta^2,\Os^2})$.  
Specifically a morphism is given by a pair $(f,F)$, with $f:=(f_\Xs,f_\Rs)$, where we have that:
\begin{itemize}
\item 
$\Rs_{\Os^1}\xrightarrow{f_\Rs:=(f_\Rs^0,f_\Rs^1)}\Rs_{\Os^2}$, denoted as 
$AB\overset{f^1_\Rs}{\mapsto} f^0_\Rs(A)f^0_\Rs(B)$,  
is an isomorphism of discrete pair groupoids,~\footnote{
This is equivalent to say that $f_\Rs:=(f_\Rs^0,f_\Rs^1)$ is functor with $f_\Rs^0:\Os^1\to\Os^2$ that is bijective. 
}
\item 
$\Xs^1\xrightarrow{f_\Xs}\Xs^2$, is a continuous $*$-functor between topological groupoids 
such that $\theta^2\circ f_\Xs=f_\Rs\circ\theta^1$, 
that is also \emph{converging at infinity}  
\begin{equation*}
\forall AB\in\Rs_{\Os^1}, \ \forall \Ks\subset \Xs^2_{f_\Rs(AB)} \ \text{compact}, \  \exists \Ls \subset \Xs^1_{AB} \ \text{compact} \st f_\Xs(\Xs^1_{AB}-\Ls)\subset \Xs^2_{f_\Rs(AB)}-\Ks,  
\end{equation*} 
\item
$F:f^\bullet(\Es^2)\to\Es^1$ is a continuous fibre-preserving fibrewise-linear $*$-functor that is \emph{vanishing at infinity} 
\begin{equation*}
\forall AB\in \Rs_{\Os^1}, \ \forall \epsilon>0, \ \forall \Ks\subset \Xs^1_{AB} \ \text{compact}, \ 
\exists \delta>0, \ \exists \Ls \subset\Xs^1_{AB} \ \text{compact} \st F(\T^{f^\bullet(\Es^2)}_{\delta,\Xs^1_{AB}-\Ls}) \subset \T^{\Es^1}_{\epsilon,\Xs^1_{AB}-\Ks}.   
\end{equation*}
\end{itemize}
\end{definition}

\begin{remark}
The additional ``convergence to zero at infinity requirements'' in the previous definition is meant to guarantee that morphisms of spaceoids preserve continuous sections vanishing at infinity: 
\begin{equation*}
\sigma\in\Gamma_o(\Es^2) \imp F\circ f^\bullet(\sigma)\in\Gamma_o(\Es^1). 
\end{equation*} 

We anticipate that such property actually means that a morphism between spaceoids actually uniquely lifts to certain universal {\it Alexandroff zerification extensions} of the involved spaceoids, with zero-dimensional fibers over the zero morphisms at infinity; such point of view will be pursued in detail in forthcoming work. 
\xqed{\lrcorner}
\end{remark}

\begin{definition}\label{def: Tf}
The category $\Tf$ of non-full spacoids consists of the following data: 
\begin{itemize}
\item
a class of objects $\Tf^0$ that are the non-full spaceoids as presented in definition~\ref{def: spaceoid},
\item
a class of morphisms $\Tf^1$ that are the Takahashi morphisms between non-full spaceoids as in~\ref{def: m-spaceoid}, 
\item
a composition of morphisms $\Es^1\xrightarrow{(g,G)}\Es^2\xrightarrow{(f,F)}\Es^3$ defined by
$\Es^1\xrightarrow{(f,F)\circ(g,G)}\Es^3$: 
\begin{equation*}
(f,F)\circ(g,G):=\left(f\circ g, \ G\circ g^\bullet(F)\circ \Theta^{\Es^3}_{f,g}\right), 
\end{equation*}
where, 
$\Theta^{\Es^3}_{f,g}:(f\circ g)^\bullet(\Es^3)\to g^\bullet(f^\bullet(\Es^3))$ 
is the canonical isomorphism of standard pull-backs and 
\begin{equation*}
f\circ g=(f_\Xs,f_\Rs)\circ(g_\Xs,g_\Rs):= (f_\Xs\circ g_\Xs,f_\Rs\circ g_\Rs), 
\end{equation*}
\item
an identity associating to every object $(\Es,\pi,\Xs_{\theta,\Os})$ the identity morphism $(\Es,\pi,\Xs_{\theta,\Os})\xrightarrow{\iota_{\Es}}(\Es,\pi,\Xs_{\theta,\Os})$: 
\begin{equation*}
\quad \quad 
\iota_\Es:=(\iota_\Xs,\Theta_\Es),
\quad \quad 
\iota_\Xs:=(\id_\Xs,\id_\Rs):\Xs_{\Os,\theta}\to \Xs_{\Os,\theta}
\end{equation*} 
where we denote by $\Theta_\Es:\iota_\Xs^\bullet(\Es)\to\Es$ the canonical isomorphism between $\iota_\Xs^\bullet(\Es)=\id_\Xs^\bullet(\Es)$ and $\Es$. 
\end{itemize}
\end{definition}

\begin{proposition}
The data listed in definition~\ref{def: Tf} constitute a category $\Tf$. 
\end{proposition}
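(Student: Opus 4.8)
The plan is to verify the three defining axioms of a category for the data of definition~\ref{def: Tf}: that the composite of two composable morphisms is again a morphism of non-full spaceoids (closure), that composition is associative, and that $\iota_\Es$ is a two-sided unit. Throughout I would exploit the elementary functoriality of the standard pull-back $f\mapsto f^\bullet$ and the properties of the canonical coherence isomorphisms $\Theta$ that appear in the definition of composition and identity.

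First I would establish closure. For the base datum $f\circ g=(f_\Xs\circ g_\Xs,\,f_\Rs\circ g_\Rs)$ the component $f_\Rs\circ g_\Rs$ is a composite of isomorphisms of discrete pair groupoids and hence an isomorphism, while $f_\Xs\circ g_\Xs$ is a composite of continuous $*$-functors; the intertwining relation $\theta^3\circ(f_\Xs\circ g_\Xs)=(f_\Rs\circ g_\Rs)\circ\theta^1$ follows by substituting $\theta^3\circ f_\Xs=f_\Rs\circ\theta^2$ and $\theta^2\circ g_\Xs=g_\Rs\circ\theta^1$ and re-associating. The convergence at infinity of $f_\Xs\circ g_\Xs$ is obtained by chaining the two individual conditions: given a compact $\Ks\subset\Xs^3_{(f_\Rs\circ g_\Rs)(AB)}$, I would first use convergence of $f_\Xs$ to produce a compact $\Ls'\subset\Xs^2_{g_\Rs(AB)}$ whose complement $f_\Xs$ maps outside $\Ks$, and then use convergence of $g_\Xs$ to produce the required compact $\Ls\subset\Xs^1_{AB}$ whose complement $g_\Xs$ maps outside $\Ls'$. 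For the bundle datum $G\circ g^\bullet(F)\circ\Theta^{\Es^3}_{f,g}$ each factor is a continuous, fibre-preserving, fibrewise-linear $*$-functor (the pull-back and the canonical isomorphism preserve all of these), so their composite is one as well; the one substantial verification is that the composite is again vanishing at infinity, which I would obtain by feeding the tubular estimate for $G$ into that for $g^\bullet(F)$, converting compact sets across the pull-backs via the convergence at infinity of the base maps, and matching the $\epsilon$--$\delta$ quantifiers of definition~\ref{def: m-spaceoid}.

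Next I would treat associativity for a triple $\Es^1\xrightarrow{(h,H)}\Es^2\xrightarrow{(g,G)}\Es^3\xrightarrow{(f,F)}\Es^4$. On base maps this is the plain associativity of functor composition, so nothing is required. On bundle maps, expanding both $((f,F)\circ(g,G))\circ(h,H)$ and $(f,F)\circ((g,G)\circ(h,H))$ reduces the claim to a coherence relation among the canonical pull-back isomorphisms $\Theta$: the two ways of identifying $(f\circ g\circ h)^\bullet(\Es^4)$ with $h^\bullet\big(g^\bullet(f^\bullet(\Es^4))\big)$ must agree, together with the naturality of $g^\bullet(-)$ and $h^\bullet(-)$ applied to $F$ and $G$. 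I expect this to be the main obstacle, precisely because the explicitly inserted $\Theta$'s must be shown to fit together on the nose rather than merely up to canonical isomorphism. The cleanest route is to check the equality fibrewise over an arbitrary $p\in\Xs^1$: there every standard pull-back fibre is canonically the fibre of $\Es^4$ over the image of $p$, so all the $\Theta$'s restrict to identity maps and both parenthesizations collapse to the same fibrewise composite of $H$, $G$ and $F$ evaluated at $p$, $h_\Xs(p)$ and $g_\Xs(h_\Xs(p))$ respectively.

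Finally, the unit laws $(f,F)\circ\iota_{\Es^1}=(f,F)$ and $\iota_{\Es^2}\circ(f,F)=(f,F)$ follow by unwinding $\iota_\Es=(\iota_\Xs,\Theta_\Es)$ with $\iota_\Xs=(\id_\Xs,\id_\Rs)$. On base maps $f\circ\iota_\Xs=f=\iota_\Xs\circ f$ is immediate. On bundle maps, the canonical isomorphism $\Theta_\Es\colon\id_\Xs^\bullet(\Es)\to\Es$ together with the compatibility of $\Theta^\Es_{f,\iota}$ (respectively $\Theta^\Es_{\iota,f}$) with identities collapses the inserted coherence maps and leaves exactly $F$ in both cases, again most transparently verified fibrewise. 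Assembling closure, associativity and the unit laws then yields that the data of definition~\ref{def: Tf} constitute a category $\Tf$.
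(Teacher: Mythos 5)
Your proposal is correct, and its skeleton — closure, associativity, unit laws, all hinging on functoriality of the standard pull-back and the coherence of the canonical isomorphisms $\Theta$ — coincides with the paper's proof. The differences are in execution and emphasis. For associativity and the unit laws the paper runs a purely symbolic chain of equalities, rewriting $\Theta^{\Es^3}_{g,h}\circ(g\circ h)^\bullet(F)$ as $h^\bullet(g^\bullet(F))\circ h^\bullet(\Theta^{\Es^4}_{f,g})$ (naturality of the coherence maps) and then collecting factors under $h^\bullet(-)$; you instead reduce the same identity to a fibrewise check, observing that every pull-back fibre is canonically a fibre of $\Es^4$ over the image point, so the $\Theta$'s become identities and both parenthesizations collapse to the same composite of $H$, $G$, $F$. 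Both are legitimate: yours is the more elementary and makes transparent why the coherence maps are harmless, the paper's keeps the naturality statement explicit and coordinate-free. Conversely, on closure the balance flips: the paper disposes of well-definedness of the composite in a single closing sentence (composition of $*$-functors converging at infinity converges at infinity, composition of morphisms vanishing at infinity vanishes at infinity), while you supply exactly the missing quantifier-chasing — chaining the two convergence-at-infinity conditions through nested compact sets, and feeding $G$'s tubular estimate into that of $g^\bullet(F)$ via the convergence at infinity of the base map — which is arguably the only analytically non-trivial content of the proposition. So your write-up is more complete than the paper's on the point the paper glosses over, and slightly more pedestrian on the point the paper spells out.
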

\begin{proof}
We must prove associativity of composition and unitality axioms. 

\medskip 

Consider the morphisms $(\Es^1,\pi^1,\Xs^1_{\Os^1,\theta^1})\xrightarrow{(h,H)}(\Es^2,\pi^2,\Xs^2_{\Os^2,\theta^2}) \xrightarrow{(g,G)}(\Es^3,\pi^3,\Xs^3_{\Os^3,\theta^3}) \xrightarrow{(f,F)}(\Es^4,\pi^4,\Xs^4_{\Os^4,\theta^4})$: 
\begin{align*}
(f,F)\circ\left((g,G)\circ(h,H)\right)
&=(f,F)\circ \left(g\circ h,\ H\circ h^\bullet(G)\circ\Theta^{\Es^3}_{g,h}\right)
\\
&=\left(f\circ(g\circ h), \ 
H\circ h^\bullet(G)\circ\Theta^{\Es^3}_{g,h}\circ (g\circ h)^\bullet(F)\circ\Theta^{\Es^4}_{f,(g\circ h)}
\right)
\\
&
=
\left((f\circ g)\circ h, \ 
H\circ h^\bullet(G)\circ h^\bullet(g^\bullet(F))\circ h^\bullet(\Theta^{\Es^4}_{f,g})
\circ\Theta^{\Es^4}_{(f\circ g), h} \right)
\\
&=\left((f\circ g)\circ h, \ 
H\circ h^\bullet(G\circ g^\bullet(F)\circ\Theta^{\Es^4}_{f,g})
\circ\Theta^{\Es^4}_{(f\circ g), h} \right)
\\
&=\left((f\circ g)\circ h, \ 
H\circ h^\bullet(G\circ g^\bullet(F)\circ\Theta^{\Es^4}_{f,g})
\circ\Theta^{\Es^4}_{(f\circ g), h}
\right)
\\
&=\left(f\circ g,\ G\circ g^\bullet(F)\circ\Theta^{\Es^4}_{f,g}\right)\circ(h,H)
=\left((f,F)\circ(g,G)\right)\circ(h,H);
\end{align*}
finally given the morphisms $(\Es^1,\pi^1,\Xs^1_{\Os^1,\theta^1})\xrightarrow{(g,G)}(\Es,\pi,\Xs_{\Os,\theta})\xrightarrow{\iota_\Es}(\Es,\pi,\Xs_{\Os,\theta})\xrightarrow{(f,F)}(\Es^2,\pi^2,\Xs^2_{\Os^2,\theta^2})$ we have:
\begin{align*}
(f,F)\circ (\iota_\Xs,\Theta_\Es)
=(f\circ\iota_\Xs, \ \Theta_\Es\circ\iota_\Xs^\bullet(F)\circ\Theta^{\Es^2}_{f,\iota_\Xs})
=(f, F), 
\\ 
(\iota_\Xs,\Theta_\Es)\circ(g,G)=(\iota_\Xs\circ g, \ 
G\circ g^\bullet(\Theta_\Es)\circ\Theta^{\Es}_{\iota_\Xs, g})
=(g,G),
\end{align*}
using the properties of canonical isomorphisms of pull-backs.  

\medskip 

Notice, in the case of identity morphisms, that $\iota_\Xs$ is continuous and converging at infinity and $\Theta^\Es$ is continuous vanishing at infinity; similarly the composition of continuous $*$-functors converging at infinity is a continuous $*$-functor converging at infinity and furthermore the composition of continuous morphisms vanishing at infinity is continuous and vanishing at infinity. 
\end{proof} 

\section{The Section Functor $\Gamma$} \label{sec: 4}

To every non-full spaceoid we can functorially associate a commutative C*-category of certain sections. 
\begin{definition}\label{def: Gamma}
The \emph{section functor} $\Tf\xrightarrow{\Gamma}\Af$, from the category $\Tf$ introduced in definition~\ref{def: Tf} to the category $\Af$ introduced in definition~\ref{def: Af}, is defined as follows: 
\begin{itemize}
\item
to every object $(\Es,\pi,\Xs_{\theta,\Os})\in\Tf^0$, $\Gamma$ associates the C*-category $\Gamma(\Es):=\Gamma(\Es,\pi,\Xs_{\theta,\Os})\in\Af$ given by: 
\begin{itemize}
\item
objects $\Gamma(\Es)^0:=\Os$ coinciding with the elements of the set $\Os$,  
\item
for $A,B\in\Os$, morphisms from $A$ to $B$ consisting of the continuous sections vanishing at infinity~\footnote{
Since sources and targets are homeomorphisms onto open subsets of compact Hausdorff spaces, it follows that $\Xs_{AB}$ is a locally compact Hausdorff space and hence (having Alexandroff compactification with point at infinity $\infty_{AB}$) it makes sense to define sections vanishing at infinity in the usual way, as in definition~\ref{def: sec-to-0}:  
for every $\epsilon>0$ there exists a compact subset $\Ks_\epsilon\subset \Xs_{AB}$ such that $\|\sigma(p)\|_\Es<\epsilon$, whenever $p\in \Xs_{AB}-\Ks_\epsilon$. 
}
\begin{equation*}
\Gamma(\Es)^1_{AB}:=\left\{\sigma:\Xs_{AB}\to \Es_{AB} \ | \ \pi_{AB}\circ\sigma=\id_{\Xs_{AB}}, \ \sigma \ \text{continuous}, \ \lim_{p_{AB}\to \infty_{AB}}\|\sigma(p_{AB})\|_\Es=0\right\}
\end{equation*}
of the $AB$-restriction $(\Es_{AB},\pi_{AB},\Xs_{AB}):=(\Es\cap\pi^{-1}({\Xs_{AB}}),\pi|^{\Xs_{AB}}_{\Es_{AB}},\Xs_{AB})$ of the spaceoid.
\item 
addition $\Gamma(\Es)_{AB}+\Gamma(\Es)_{AB}\subset\Gamma(\Es)_{AB}$, for all $A,B\in\Os$, that for all $\sigma,\rho\in\Gamma(\Es)_{AB}$, is given by:
\begin{equation*}
(\sigma+\rho)(p_{AB}):=\sigma(p_{AB})+\rho(p_{AB}), \quad \forall p_{AB}\in\Xs_{AB}, 
\end{equation*}
\item 
scalar multiplication $\CC\cdot\Gamma(\Es)_{AB}\subset\Gamma(\Es)_{AB}$, for $A,B\in\Os$, that for  $(\alpha,\sigma)\in\CC\times\Gamma(\Es)_{AB}$, is given by:
\begin{equation*}
(\alpha\cdot \sigma)_{p_{AB}}:=\alpha\cdot (\sigma(p_{AB})), \quad \forall p_{AB}\in\Xs_{AB}, 
\end{equation*}
\item 
composition $\Gamma(\Es)_{AB}\circ\Gamma(\Es)_{BC}\subset \Gamma(\Es)_{AC}$, for $A,B,C\in\Os$ defined $\forall (\sigma,\rho)\in \Gamma(\Es)_{AB}\times\Gamma(\Es)_{BC}$ by: 
\begin{equation*}
(\sigma\circ \rho)(p_{AC}):=
\begin{cases}
\sigma(p_{AB})\circ_\Es \rho(p_{BC}), \quad \quad \forall p_{AC}\in\Xs_{AB}\circ\Xs_{BC}\subset\Xs_{AC}, 
\\
\zeta^\Es_{AC}(p_{AC}):=0_{\Es_{p_{AC}}}, \quad \quad\forall p_{AC}\in \Xs_{AC}-\Xs_{AB}\circ\Xs_{BC},
\end{cases}
\end{equation*}
where we make use of the notation justified in remark~\ref{rem: p}. 
\item 
involution $\Gamma(\Es)_{AB}^*\subset \Gamma(\Es)_{BA}$, for $A,B\in\Os$ given by: 
\begin{equation*}
(\sigma^*)(p_{BA}):=\sigma(p_{AB})^*, \quad \forall p_{AB}\in \Xs_{AB}, \ \forall \sigma\in\Gamma(\Es)_{AB}, 
\end{equation*}
\item 
identities $\iota_A$, for all $A\in\Os$ given by the constant unit sections  
\begin{equation*}
\iota^{\Es}_A(p_{AA}):=1_{\Es_{p_{AA}}}, \quad \forall p_A\in \Xs_A, 
\end{equation*}
where $1_{\Es_{p_{AA}}}$ denotes the identity element of the unital C*-algebra $\Es_{p_{AA}}$ of the Fell bundle $\Es$, 
\item 
the norm $\|\cdot\|:\Gamma(\Es)\to\RR$ is defined, for all $A,B\in\Os$ and all $\sigma\in\Gamma(\Es)_{AB}$ by: 
\begin{equation}\label{eq: norm-G}
\|\sigma\|:=\sup_{p_{AB}\in\Xs_{AB}}\|\sigma(p_{AB})\|_\Es. 
\end{equation}
\end{itemize}
\item 
to every morphism of spaceoids $(\Es^1,\pi^1,\Xs^1)\xrightarrow{(f,F)}(\Es^2,\pi^2,\Xs^2)$ in $\Tf^1$, $\Gamma$ associates the morphism of C*-categories $\Gamma(\Es^1)\xleftarrow{\Gamma_{(f,F)}}\Gamma(\Es^2)$ given by: \footnote{ 
That $\Gamma_{(f,F)}$ is a well-defined function is described in the subsequent proposition. 
}
\begin{equation*}
\Gamma_{(f,F)}: \sigma\mapsto F(f^\bullet(\sigma)), \quad \forall \sigma\in \Gamma(\Es^2).   
\end{equation*} 
\end{itemize}
\end{definition} 

The following is our ``non-full version'' of~\cite[proposition~4.1]{BCL11}. 
\begin{proposition}
The previous definitions provide a well-defined contravariant functor $\Tf\xrightarrow{\Gamma}\Af$. 
\end{proposition}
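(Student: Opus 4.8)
The plan is to unpack the word ``functor'' into three separate obligations and to dispatch the routine parts quickly so as to concentrate on the two genuinely delicate points. First I would check that $\Gamma(\Es)$ is truly an object of $\Af$ for each spaceoid $\Es$; second that $\Gamma_{(f,F)}$ is truly a morphism of $\Af$ for each spaceoid morphism $(f,F)$; and finally that $\Gamma$ reverses composition and preserves identities. Most of the algebraic axioms reduce to fibrewise verifications inherited from the Fell-bundle structure, so I would isolate at the outset the two points that actually require analysis: the continuity and decay of the composed section, and the non-degeneracy of the induced $*$-functor.

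For the object part, the pointwise operations (addition, scalar multiplication, involution, the diagonal identities) visibly remain inside $\Gamma_o$, and the C*-norm identity together with positivity of $\sigma^*\circ\sigma$ hold because they hold in every fibre $\Es_{p}$ by definition~\ref{def: Fell}; finiteness of the norm~\eqref{eq: norm-G} follows from continuity plus vanishing at infinity (so each section is bounded), and completeness of each $\Gamma(\Es)_{AB}$ is the standard fact that a uniform limit of continuous sections vanishing at infinity is again such a section, which is legitimate here because the tubular topology is unambiguous by lemma~\ref{lem: lcpr}. Commutativity of the diagonal algebras is immediate: over the unit space $\Xs_A$ the fibres are one-dimensional unital C*-algebras, hence copies of $\CC$, so $\Gamma(\Es)_{AA}\cong C(\Xs_A;\CC)$. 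The delicate point is the composite $\sigma\circ\rho$ of definition~\ref{def: Gamma}: it is manifestly continuous on the open set $\Xs_{AB}\circ\Xs_{BC}$ and identically zero off its closure, so continuity must be established at boundary points of $\Xs_{AB}\circ\Xs_{BC}$ inside $\Xs_{AC}$. Here I would argue that approaching such a boundary point forces the source or the target to leave every compact subset of the relevant open set $\Xs_{AB}$ or $\Xs_{BC}$, whence the vanishing-at-infinity hypothesis drives $\sigma(p_{AB})$ or $\rho(p_{BC})$, and therefore the product, to zero; the same estimate simultaneously yields that $\sigma\circ\rho$ lies in $\Gamma_o$ over $\Xs_{AC}$.

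For the morphism part, well-definedness of $\Gamma_{(f,F)}$ is exactly what the ``converging at infinity'' condition on $f_\Xs$ and the ``vanishing at infinity'' condition on $F$ in definition~\ref{def: m-spaceoid} are designed to guarantee. That $\Gamma_{(f,F)}$ is a covariant $*$-functor follows from functoriality of the pull-back together with $F$ being a continuous fibrewise-linear $*$-functor, and object bijectivity is inherited from the bijection $f_\Rs^0$. The genuine obstacle is the non-degeneracy condition~\eqref{eq: non-trivial}, and the key observation is that $F$ is fibrewise an isomorphism. Indeed, as a $*$-functor $F$ preserves identities, and for any $e$ over $p$ the element $e\circ e^*$ is positive of norm $\|e\|^2$ in the one-dimensional diagonal fibre $\cong\CC$, so $F(e)\circ F(e)^*=F(e\circ e^*)\neq 0$ forces $F(e)\neq 0$; a nonzero linear map between one-dimensional fibres is invertible. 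Hence $\Gamma_{(f,F)}$ cannot annihilate a Hom-set on which a pulled-back functional is non-trivial, and I would convert this into~\eqref{eq: non-trivial} by tracing the relevant $*$-functor $\omega$ back along $f_\Xs$ and the fibrewise isomorphism $F$.

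Finally, preservation of identities is the statement $\Gamma_{\iota_\Es}=\id_{\Gamma(\Es)}$ modulo the canonical identification $\Theta_\Es$ of $\iota_\Xs^\bullet(\Es)$ with $\Es$, which is immediate. Contravariance, $\Gamma_{(f,F)\circ(g,G)}=\Gamma_{(g,G)}\circ\Gamma_{(f,F)}$, is obtained by unwinding the composition formula of definition~\ref{def: Tf} using functoriality of $g^\bullet$ and the cocycle identities for the canonical pull-back isomorphisms $\Theta$; this is a diagram chase parallel to the associativity computation already carried out in the proof that $\Tf$ is a category. I expect the boundary continuity/decay argument for the composite section and the non-degeneracy verification to be the two places demanding real care, the remaining steps being routine.
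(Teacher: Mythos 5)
Your overall decomposition coincides with the paper's own proof: first verify that $\Gamma(\Es)$ is an object of $\Af$ by fibrewise verification of the algebroid and C*-axioms together with preservation of $\Gamma_o$ under the operations (with $\Gamma(\Es)_{AA}\cong C(\Xs_A;\CC)$ giving commutativity of the diagonals), then verify that $\Gamma_{(f,F)}$ is an object-bijective non-degenerate $*$-functor, then check contravariance and unitality via the canonical pull-back isomorphisms $\Theta$. Two of your refinements actually go beyond what the paper writes down. The paper dismisses continuity of $\sigma\circ\rho$ with ``by composition of continuous functions'', which only covers the open set $\Xs_{AB}\circ\Xs_{BC}$; your escape-to-infinity argument at boundary points (approaching a boundary point of $\Xs_{AB}\circ\Xs_{BC}$ inside $\Xs_{AC}$ forces one factor to leave every compact subset of $\Xs_{AB}$ or of $\Xs_{BC}$, so vanishing at infinity kills that factor while the other stays bounded) is exactly the missing justification, and it simultaneously yields $\sigma\circ\rho\in\Gamma_o$, where the paper's one-line limit estimate implicitly assumes both factors diverge. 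Likewise, you prove that $F$ is fibrewise an isomorphism (unit preservation plus positivity in the one-dimensional diagonal fibres), a fact the paper's proof merely asserts.

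The one place where your sketch is materially thinner than the paper's proof is the conversion of the fibrewise-isomorphism property into the non-degeneracy condition~\eqref{eq: non-trivial}. Fibrewise injectivity of $F$ does not by itself prevent $\omega\circ\Gamma_{(f,F)}$ from vanishing on a Hom-set: since $\Gamma_{(f,F)}(\rho)=F\circ f^\bullet(\rho)$ depends only on the values of $\rho$ on $f_\Xs(\Xs^1_{AB})$, the map $\Gamma_{(f,F)}$ can perfectly well annihilate nonzero sections when $f_\Xs$ is not surjective, so your assertion that ``$\Gamma_{(f,F)}$ cannot annihilate a Hom-set on which a pulled-back functional is non-trivial'' is not literally the statement needed. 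The complete argument, as in the paper, requires two further ingredients: (i) Gel'fand duality on the diagonal, writing $\omega_{AA}$ as evaluation at a point $p_A\in\Xs^1_A$ modulo the Gel'fand--Mazur identification of the fibre with $\CC$, which (because $|\omega_{AB}(\tau)|^2=\omega_{AA}(\tau^*\circ\tau)$ evaluated at $p_A$) localizes a non-trivial $\omega_{AB}$ at the unique point $p_{AB}\in\Xs^1_{AB}$ lying over $p_A$; and (ii) $\Gamma_o$-saturation of $\Es^2$, which produces a witness section $\rho$ with $\rho(f_\Xs(p_{AB}))\neq 0$. Only then does the fibrewise isomorphism give $F(\rho(f_\Xs(p_{AB})))\neq 0$ and hence $\omega(\Gamma_{(f,F)}(\rho))\neq 0$. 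Your phrase ``tracing $\omega$ back along $f_\Xs$'' gestures at (i), but saturation is nowhere invoked in your non-degeneracy paragraph, and without it the witness section need not exist.
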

\begin{proof} 
We see that $\Gamma(\Es)$ in definition~\ref{def: Gamma} is a well-defined commutative small C*-category for any $(\Es,\pi,\Xs_{\theta,\Os})$. 

\medskip 

From definition~\ref{def: Fell}, all the operations of addition, scalar multiplication, composition, involution, additive and multiplicative identity are continuous in the unital Fell-line bundle $(\Es,\pi,\Xs)$ and hence, by composition of continuous functions, for all $A,B,C\in\Os$ all $\sigma_1,\sigma_1,\sigma\in\Gamma(\Es)_{AB}, \rho\in\Gamma(\Es)_{BC}$ and all $\alpha\in\CC$, we have that $\sigma_1+\sigma_2$, $\alpha \cdot \sigma$, are continuous sections of $\Es_{AB}$; $\sigma\circ\rho$ is a continuous section of $\Es_{AC}$; $\sigma^*$ is a continuous section of $\Es_{BA}$; $\zeta^\Es_A$ and $\iota^\Es_A$ are all continuous sections of $\Es_{AA}$. Since $\Xs_A$ is already compact (hence all continuous sections vanish at infinity), $\zeta^\Es_A,\ \iota^\Es_A\in\Gamma(\Es)_{AA}$; furthermore  $\sigma_1+\sigma_2,\ \alpha\cdot \sigma\in\Gamma(\Es)_{AB}$ because
\begin{equation*} \lim_{p_{AB}\to\infty_{AB}}\|\sigma_1+\sigma_2\|\leq\lim_{p_{AB}\to\infty_{AB}}\|\sigma_1\|+\lim_{p_{AB}\to\infty_{AB}}\|\sigma_2\|=0, 
\quad  
\lim_{p_{AB}\to\infty_{AB}}\|\alpha\cdot\sigma\|=|\alpha|\cdot\lim_{p_{AB}\to\infty_{AB}}\|\sigma\|=0;
\end{equation*} 
finally $\sigma^*\in\Gamma(\Es)_{BA}$ and $\sigma\circ\rho\in\Gamma(\Es)_{AC}$ because: 
\begin{equation*}
\lim_{p_{AB}\to\infty_{BA}}\|\sigma^*\|=\lim_{p_{AB}\to\infty_{AB}}\|\sigma\|=0, 
\quad 
\lim_{p_{AC}\to\infty_{AC}}\|\sigma\circ\rho\|\leq \lim_{p_{AB}\to\infty_{AB}}\|\sigma\|\cdot\lim_{p_{BC}\to\infty_{BC}}\|\rho\|=0.
\end{equation*}

With the above well-defined operations, $\Gamma(\Es)$ is an involutive complex unital algebroid: 
\begin{itemize}
\item 
for all $A,B\in\Os$, the Hom-set $\Gamma(\Es)_{AB}$ with pointwise addition and scalar multiplication, is a complex vector space with zero vector $0_{\Gamma(\Es)_{AB}}=\zeta^\Es|_{AB}$ and opposite given as $(-\sigma)(p_{AB})=-(\sigma(p_{AB}))$, for all $p_{AB}\in\Xs_{AB}$, for all $\sigma\in\Gamma(\Es)_{AB}$,  
\item
the composition $\circ:\Gamma(\Es)_{AB}\times\Gamma(\Es)_{BC}\to\Gamma(\Es)_{AC}$, for $A,B,C\in\Os$, is $\CC$-bilinear:  
\begin{itemize}
\item 
whenever $p_{AC}\in\Xs_{AC}-\Xs_{AB}\circ\Xs_{BC}$, we have, $\forall \sigma,\sigma_1,\sigma_2\in\Gamma(\Es)_{AB}, \ \rho,\rho_1,\rho_2\in\Gamma(\Es)_{BC}$,  
\begin{align*}
(\sigma\circ (\rho_1+\rho_2))(p_{AC})=\zeta^\Es_{AC}(p_{AC})=\sigma\circ\rho_1 (p_{AC})+\sigma\circ\rho_2 (p_{AC})
=(\sigma\circ\rho_1+\sigma\circ\rho_2)(p_{AC}),
\\
((\sigma_1+\sigma_2)\circ\rho)(p_{AC}) 
=\zeta_{AC}^\Es(p_{AC})
=\sigma_1\circ\rho(p_{AC})+\sigma_2\circ\rho(p_{AC})
=(\sigma_1\circ\rho+\sigma_2\circ\rho)(p_{AC}), 
\end{align*}
\item
otherwise, whenever $\forall p_{AC}\in\Xs_{AB}\circ\Xs_{BC}$, we obtain, for same sections,  
\begin{align*}
(\sigma\circ (\rho_1+\rho_2))(p_{AC})
&=\sigma(p_{AB})\circ (\rho_1+\rho_2)(p_{BC})
=\sigma(p_{AB})\circ \rho_1(p_{BC})+\sigma(p_{AB})\circ\rho_2(p_{BC})
\\
&=(\sigma\circ\rho_1+\sigma\circ\rho_2)(p_{AC}), 
\\ 
((\sigma_1+\sigma_2)\circ\rho)(p_{AC}) 
&=(\sigma_1+\sigma_2)(p_{AB})\circ\rho(p_{BC})
=\sigma_1(p_{AB})\circ\rho(p_{BC})+\sigma_2(p_{AB})\circ\rho(p_{BC})
\\
&=(\sigma_1\circ\rho+\sigma_2\circ\rho)(p_{AC}),
\end{align*}
\end{itemize} 
\item  
the composition is also unital and associative: 
\begin{itemize}
\item 
for any section $\sigma\in\Gamma(\Es)_{AB}$, for all $p_{AB}\in\Xs_{AB}$, since $\Xs_A\circ\Xs_{AB}=\Xs_{AB}=\Xs_{AB}\circ\Xs_B$, we have:
\begin{align*}
(\sigma\circ \iota^\Es_B)(p_{AB})=\sigma(p_{AB})\circ\iota^\Es_{B}(p_{BB})=\sigma(p_{AB})
=\iota^\Es_A(p_{AA})\circ\sigma(p_{AB});  
\end{align*}
\item 
whenever $p_{AD}\in\Xs_{AB}\circ\Xs_{BC}\circ\Xs_{CD}$, for all $\sigma\in\Gamma(\Es)_{AB}$, $\rho\in\Gamma(\Es)_{BC}$, $\tau\in\Gamma(\Es)_{CD}$, 
\begin{align*}
((\sigma\circ\rho)\circ\tau)(p_{AD})
&=(\sigma\circ\rho)(p_{AC})\circ\tau(p_{CD})
=\sigma(p_{AB})\circ\rho(p_{BC})\circ\tau(p_{CD})
\\
&=\sigma(p_{AB})\circ(\rho\circ\tau)(p_{BD})
=(\sigma\circ(\rho\circ\tau))(p_{AD}),
\end{align*}
\item 
whenever $p_{AD}\in\Xs_{AD}-\Xs_{AB}\circ\Xs_{BC}\circ\Xs_{CD}$, since $(\Xs_{AB}\circ\Xs_{BC})\circ\Xs_{CD}=\Xs_{AB}\circ(\Xs_{BC}\circ\Xs_{CD})$, by definition of composition and its $\CC$-linearity, both compositions $(\sigma\circ\rho)\circ\tau(p_{AD})$ and $\sigma\circ(\rho\circ\tau)(p_{AD})$ coincide with the zero section $\zeta_{AD}^\Es(p_{AD}):=0_{\Es_{p_{AD}}}$, 
\end{itemize}
\item 
the involution $\Gamma(\Es)_{AB}\xrightarrow{*}\Gamma(\Es)_{BA}$, for $A,B\in\Os$, is involutive, conjugate-linear: 
\begin{align*}
((\sigma^*)^*)(p_{AB})&=(\sigma^*(p_{BA}))^*=((\sigma(p_{AB}))^*)^*=\sigma(p_{AB}), \quad \forall \sigma\in\Gamma(\Es)_{AB}, \ p_{AB}\in\Xs_{AB}, 
\\
(\sigma+\rho)^*(p_{BA})&=(\sigma+\rho)(p_{AB})^*=(\sigma(p_{AB})+\rho(p_{AB}))^*
=\sigma(p_{AB})^*+\rho(p_{AB})^*=\sigma^*(p_{BA})+\rho^*(p_{BA}) \\
&=(\sigma^*+\rho^*)(p_{BA}), \quad \forall \sigma,\rho\in\Gamma(\Es)_{AB}, \ p_{AB}\in\Xs_{AB} 
\\
(\alpha\cdot\sigma)^*(p_{AB})&=((\alpha\cdot\sigma)(p_{BA}))^*=\cj{\alpha}\cdot (\sigma(p_{BA}))^*
=\cj{\alpha}\cdot\sigma^*(p_{AB}), 
\ \ \forall \alpha\in\CC, \ \sigma\in\Gamma(\Es)_{AB}, \ p_{AB}\in\Xs_{AB},
\end{align*}
\item 
the involution is antimultiplicative: for all $A,B,C\in\Os$, for all $\sigma\in\Gamma(\Es)_{AB}$, and $\rho\in\Gamma(\Es)_{BC}$, 
\begin{itemize}
\item
since $(\Xs_{AB}\circ\Xs_{BC})^{*}=\Xs_{CB}\circ\Xs_{BA}$, we have $p_{AC}\in\Xs_{AB}\circ\Xs_{BC}\iff p_{CA}\in\Xs_{CB}\circ\Xs_{BA}$ and in this case: 
\begin{align*}
(\sigma\circ\rho)^*(p_{CA})&=(\sigma\circ\rho(p_{AC}))^*=(\sigma(p_{AB})\circ\rho(p_{BC}))^*
=(\rho(p_{BC}))^*\circ(\sigma(p_{AB}))^*
\\
&=\rho^*(p_{CB})\circ\sigma^*(p_{BA})=\rho^*\circ\sigma^*(p_{CA}), 
\end{align*}
\item 
otherwise, since $p_{AC}\in\Xs_{AC}-\Xs_{AB}\circ\Xs_{BC}\iff p_{CA}\in\Xs_{CA}-\Xs_{CB}\circ\Xs_{BA}$, we have
\begin{equation*}
(\sigma\circ\rho)^*(p_{CA})=\zeta^\Es_{CA}(p_{CA})=\rho^*\circ\sigma^*(p_{CA}).
\end{equation*} 
\end{itemize}
\end{itemize}

\medskip 

The norm on $\Gamma(\Es)$ is well-defined by equation~\eqref{eq: norm-G} because $\sigma\in\Gamma(\Es)_{AB}$ is continuous (on the locally compact Hausdorff space $\Xs_{AB}$) vanishing at infinity and hence the supremum is actually the maximum of the continuous map $p_{AB}\mapsto \|\sigma(p_{AB})\|$ vanishing at infinity. 

\medskip 

For $A,B,C\in\Os$ and all $(\sigma,\rho)\in\Gamma(\Es)_{AB}\times\Gamma(\Es)_{BC}$, since the support of $\sigma\circ\rho$ is inside $\Xs_{AB}\circ\Xs_{BC}\subset\Xs_{AC}$, we get:
\begin{align*}
\|\sigma\circ\rho\|
&=\sup_{p_{AC}\in\Xs_{AC}}\|(\sigma\circ\rho)(p_{AC})\|
=\sup_{p_{AC}\in\Xs_{AB}\circ\Xs_{BC}}\|(\sigma\circ\rho)(p_{AC})\|
=\sup_{(p_{AB},p_{BC})\in\Xs_{AB}\times_{X_B}\Xs_{BC}}\|\sigma(p_{AB})\circ\rho(p_{BC})\|
\\
&\leq \sup_{(p_{AB},p_{BC})\in\Xs_{AB}\times_{X_B}\Xs_{BC}}\|\sigma(p_{AB})\|\cdot\|\rho(p_{BC})\|
\leq\sup_{p_{AB}\in\Xs_{AB}}\|\sigma(p_{AB})\|\cdot\sup_{p_{BC}\in\Xs_{BC}}\|\rho(p_{BC})\|
=\|\sigma\|\circ\|\rho\|. 
\end{align*}

\medskip 

For all $A,B\in\Os$ and all $\sigma\in\Gamma(\Es)_{BA}$, using the fact that $\sigma^*\circ\sigma$ has support inside $\Xs_{AB}\circ\Xs_{BA}\subset\Xs_{A}$, we have: 
\begin{align*}
\|\sigma^*\circ\sigma\|
&=\sup_{p_{AA}\in\Xs_{A}} \|(\sigma^*\circ\sigma)(p_{AA})\| 
=\sup_{p_{AA}\in\Xs_{AB}\circ\Xs_{BA}} \|(\sigma^*\circ\sigma)(p_{AA})\| 
=\sup_{p_{BA}\in \Xs_{BA}} \|\sigma^*(p_{AB})\circ\sigma(p_{BA})\|
\\
&=\sup_{p_{BA}\in\Xs_{BA}} \|\sigma(p_{BA})^*\circ\sigma(p_{BA})\|
=\sup_{p_{BA}\in\Xs_{BA}}\|\sigma(p_{BA})\|^2
=\|\sigma\|^2.
\end{align*}

\medskip 

As a consequence of the previous steps, for all $A\in\Os$, $\Gamma(\Es)_{AA}$ is a unital C*-algebra, consisting of all the continuous sections of the trivial complex line bundle $(\Es_A,\pi,\Xs_A)$ over the compact Hausdorff topological space $\Xs_A$, and hence it is isomorphic to the commutative unital C*-algebra $C(\Xs_A)$. 
It follows that an element $\sigma\in\Gamma(\Es)_{AA}$ is positive if and only if, for all $p_{AA}\in\Xs_A$, the element $\sigma(p_{AA})$ is positive in the fiber $\Es_{p_{AA}}$ of the Fell bundle. 
Since $(\sigma^*\circ\sigma)(p_{BB})=\sigma^*(p_{BA})\circ\sigma(p_{AB})=(\sigma(p_{AB}))^*\circ(\sigma(p_{AB}))$ is a positive element the fiber $\Es_{p_{BB}}$ of the Fell bundle, for every $A,B\in\Os$, $\sigma\in\Gamma(\Es)_{AB}$ and $p_{AB}\in\Xs_{AB}$, we obtain the positivity condition of  $\sigma^*\circ\sigma$ and hence $\Gamma(\Es)$ is a commutative C*-category. 

\medskip 

Since $\Gamma(\Es)^0:=\Os$ is a set, the commutative C*-category $\Gamma(\Es)$ is small. 

\medskip 

We notice that $\Gamma_{(f,F)}$ is well-defined, since for any section $\sigma\in\Gamma(\Es^2)$ continuous vanishing at infinity the $f$-pull-back of $\sigma$ is defined as a continuous section $f^\bullet(\sigma)\in\Gamma(f^\bullet(\Es^2))$ vanishing at infinity and, since also  $F:f^\bullet(\Es^2)\to\Es$ vanishes at infinity, we obtain by composition a continuous section $F\circ f^\bullet(\sigma)$ in $\Gamma(\Es^1)$, vanishing at infinity as required. 

\medskip 

If $\omega:\Gamma(\Es^1)\to\CC$ is a $*$-functor such that $\omega_{BA}\neq 0$, also $\omega_{AA}\neq0$ because, whenever $\omega_{BA}(\sigma)\neq0$,  $\omega_{AA}(\sigma^*\circ\sigma)=\omega_{AB}(\sigma^*)\circ\omega_{BA}(\sigma)>0$. By Gel'fand duality for the commutative unital C*-algebra $\Gamma(\Es^1)_{AA}$ we have the existence of a point $p_A\in\Xs_A$ such that $\omega_{AA}=\xi_{p_A}\circ\ev_{p_A}$, where $\xi^1_{p_A}:\Es_{p_A}\to\CC$ is the canonical Gal'fand-Mazur isomorphism between 1-dimensional C*-algebras; using the fact that $F_{p_A}:f_\Xs^\bullet(\Es^2)_{p_A}\to \Es^1_{p_A}$ is an isomorphism of 1-dimensional C*-algebras, 
it follows that $(\Gamma_{(f,F)}^\bullet(\omega))_{f^1_\Rs(AA)}\simeq\ev_{f_\Xs(p_A)}$ modulo the unique Gel'fand-Mazur isomorphism $\xi^2_{f_\Rs(p_A)}:\Es^2_{f_\Rs(p_A)}\to\CC$ and hence, by saturation of the spaceoid $\Es^2$, we obtain $(\Gamma_{(f,F)}^\bullet(\omega))_{f^1_\Rs(AA)}\neq 0$. Since $f_\Xs(p_{AB})\circ f_\Xs(p_{BA})=f_\Xs(p_A)$ and, for all $\rho\in\Gamma(\Es^2)_{f^1_\Rs(BA)}$, we have $(\Gamma_{(f,F)}^\bullet(\omega))_{f^1_\Rs(AA)}(\rho^*\circ\rho) =(\Gamma_{(f,F)}^\bullet(\omega))_{f^1_\Rs(AB)}(\rho^*)\circ(\Gamma_{(f,F)}^\bullet(\omega))_{f^1_\Rs(BA)}(\rho)$, we see that $\Gamma^\bullet_{(f,F)}(\omega)\neq 0$ on the Hom-set $\Gamma(\Es^2)_{f_\Rs(BA)}$ and this completes the proof of the non-degeneracy of the covariant $*$-functor $\Gamma_{(f,F)}$.  

\medskip 

Given $(\Es^1,\pi^1,\Xs^1_{\theta^1,\Os^1})\xrightarrow{(f,F)}(\Es^2,\pi^2,\Xs^2_{\theta^2,\Os^2})$, 
for all sections $\sigma,\rho\in\Gamma(\Es^2)$ and for all $A\in\Os^1$, we have: 
\begin{align*}
&
\Gamma_{(f,F)}(\sigma\circ\rho)
= 
F(f^\bullet(\sigma\circ\rho))
=F(f^\bullet(\sigma)\circ f^\bullet(\rho))
=F(f^\bullet(\sigma))\circ F(f^\bullet(\rho))
= \Gamma_{(f,F)}(\sigma)\circ\Gamma_{(f,F)}(\rho), 
\\ 
& 
\Gamma_{(f,F)}(\iota^{\Es^2}_{f^0_\Rs(A)})
=F(f^\bullet(\iota^{\Es^2}_{f^0_\Rs(A)}))
=F(\iota^{f^\bullet(\Es^2)}_{A})
=\iota^{\Es^1}_A, 
\\
&
\Gamma_{(f,F)}(\sigma^*)
=F(f^\bullet(\sigma^*))
=F(f^\bullet(\sigma)^*)
=F(f^\bullet(\sigma))^*
=(\Gamma_{(f,F)}(\sigma))^*,
\end{align*}
and hence $\Gamma(\Es^2)\xrightarrow{\Gamma_{(f,F)}}\Gamma(\Es^1)$ is a $*$-functor. 

\medskip 

Let $(\Es^1,\pi^1,\Xs^1_{\theta^1,\Os^1})\xrightarrow{(g,G)}(\Es^2,\pi^2,\Xs^2_{\theta^2,\Os^2}) \xrightarrow{(f,F)}(\Es^3,\pi^3,\Xs^3_{\theta^3,\Os^3})$ be morphisms in $\Tf$, we have for all $\sigma\in\Gamma(\Es^3)$: 
\begin{align*} 
G\circ g^\bullet(F)\circ \Theta^{\Es^3}_{f,g} ((f\circ g)^\bullet(\sigma))
=G\circ g^\bullet(F)(g^\bullet(f^\bullet(\sigma)))
=G(g^\bullet(F(f^\bullet(\sigma))))
=G(g^\bullet(F(f^\bullet(\sigma)))), 
\end{align*}
and hence $\Gamma_{(f,F)\circ(g,G)}=\Gamma_{(f\circ g, G\circ g^\bullet(F)\circ \Theta^{\Es^3}_{f,g} )} 
=\Gamma_{(g,G)}\circ\Gamma_{(f,F)}$. 

\medskip 

Similarly, given the identity morphism  $(\Es,\pi,\Xs_{\theta,\Os})\xrightarrow{(\iota_\Xs,\Theta_\Es)}(\Es,\pi,\Xs_{\theta,\Os})$, for all $\sigma\in\Gamma(\Es)$: 
\begin{align*}
\Gamma_{((\iota_\Xs,\Theta_\Es))}(\sigma)=\Theta_\Es(\iota_\Xs^\bullet(\sigma))=\sigma
=\iota_{\Gamma(\Es)}(\sigma).
\end{align*}
It follows that $\Gamma$ is a contravariant functor. 
\end{proof}

\section{The Spectrum functor $\Sigma$} \label{sec: 5}

Let us start with the long construction of the spectral spaceoid of a commutative non-full C*-category. 
\begin{definition}
Given a commutative non-full C*-category $\Cs\in\Af^0$, we define 
\begin{equation*}
[\Cs;\CC]:=\left\{\omega \ | \ \Cs\xrightarrow{\omega}\CC \ \text{is a $*$-functor that is $\CC$-linear on each Hom-set $\Cs_{AB}$, for all $A,B\in\Cs^0$} \right\}, 
\end{equation*}
where we consider $\CC$ as a commutative C*-category with only one-object $\bullet$. 

\medskip 

We equip the set $[\Cs;\CC]$ with the weak-$*$-topology induced by the set $\{\ev_x \ | \ x\in\Cs^1\}$ of all the evaluation maps $\ev_x:[\Cs;\CC]\to\CC$ defined, for all $x\in\Cs^1$, by $\ev_x:\omega\mapsto \omega(x)$. 
\end{definition}

\begin{lemma}\label{lem: cH}
The topological space $[\Cs;\CC]$ is compact and Hausdorff. 
\end{lemma}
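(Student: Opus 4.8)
The plan is to mimic the classical proof that the Gel'fand spectrum of a commutative unital C*-algebra is compact Hausdorff, now carried out ``horizontally'' over all Hom-sets simultaneously. Concretely, I would realize $[\Cs;\CC]$ as a closed subspace of a product of closed disks and invoke Tychonoff's theorem for compactness, while the Hausdorff property will follow directly from the defining weak-$*$-topology.

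First I would dispose of the Hausdorff property, which is immediate. By construction the weak-$*$-topology on $[\Cs;\CC]$ is the initial topology for the family $\{\ev_x\}_{x\in\Cs^1}$, so each $\ev_x$ is continuous. If $\omega\neq\omega'$ are distinct elements of $[\Cs;\CC]$, then $\omega(x)\neq\omega'(x)$ for some $x\in\Cs^1$; since $\CC$ is Hausdorff we separate $\ev_x(\omega)$ and $\ev_x(\omega')$ by disjoint opens $U,V\subset\CC$, and then $\ev_x^{-1}(U)$, $\ev_x^{-1}(V)$ separate $\omega$ and $\omega'$.

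The crucial preliminary for compactness is a uniform bound $|\omega(x)|\le\|x\|$, valid for every $\omega\in[\Cs;\CC]$ and every $x\in\Cs_{AB}$. Here I would use that, being a $*$-functor, $\omega$ sends the identity $\iota_B$ to $1\in\CC$ and is both multiplicative and involutive; hence its restriction $\omega|_{\Cs_{BB}}$ is a unital character of the commutative unital C*-algebra $\Cs_{BB}$, and in particular contractive. Then, by the C*-identity, $|\omega(x)|^2=\overline{\omega(x)}\,\omega(x)=\omega(x^*\circ x)\le\|x^*\circ x\|=\|x\|^2$, which gives the estimate. (As a byproduct this shows the automatic positivity $\omega(x^*\circ x)=|\omega(x)|^2\ge 0$, so no separate positivity condition is needed.)

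Finally I would consider the product space $P:=\prod_{x\in\Cs^1}\{z\in\CC:|z|\le\|x\|\}$, compact by Tychonoff, and the injection $\Phi:\omega\mapsto(\omega(x))_{x\in\Cs^1}$, which is a homeomorphism onto its image since the product topology is precisely the initial topology for the coordinate projections (matching the $\ev_x$). The heart of the argument, and the step I expect to require the most care, is to verify that $\Phi([\Cs;\CC])$ is closed in $P$. I would exhibit the image as the intersection, over all admissible tuples (same Hom-set, composable pairs, objects), of the zero sets of the continuous maps $(z_x)\mapsto z_{x+y}-z_x-z_y$, $(z_x)\mapsto z_{\alpha x}-\alpha z_x$, $(z_x)\mapsto z_{x\circ y}-z_x z_y$, $(z_x)\mapsto z_{x^*}-\overline{z_x}$, and $(z_x)\mapsto z_{\iota_A}-1$, each continuous as a finite algebraic combination of coordinate projections. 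Every such condition defines a closed set, so their intersection is closed; conversely any point of this intersection defines, via $\omega(x):=z_x$, a genuine $*$-functor that is $\CC$-linear on each Hom-set, hence an element of $[\Cs;\CC]$. Being a closed subset of the compact space $P$, the image $\Phi([\Cs;\CC])\cong[\Cs;\CC]$ is compact, which completes the proof.
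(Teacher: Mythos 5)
Your proposal is correct and follows essentially the same route as the paper's proof: the uniform bound $|\omega(x)|\leq\|x\|$ via the C*-identity and the character/state property on the diagonal algebra, the embedding of $[\Cs;\CC]$ into the Tychonoff-compact product $\prod_{x\in\Cs^1}\cj{B(0_\CC,\|x\|)}$, the identification of the image as an intersection of closed sets cut out by the linearity, multiplicativity, involutivity and unitality constraints, and the Hausdorff property from the separating evaluations. The only cosmetic differences are that you split linearity into additivity and homogeneity (the paper uses a single combined condition) and you spell out the converse verification that a point of the intersection is a genuine $*$-functor, which the paper leaves implicit.
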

\begin{proof}
Exactly as in the proof of~\cite[proposition~5.4]{BCL11}, since for all $x\in\Cs^1$ (using the fact that the restriction of $\omega$ to the unital C*-algebra $\Cs_{s(x)s(x)}$ is a state and hence $\|\omega\|=1$), 
$|\omega(x)|^2=\cj{\omega(x)}\cdot\omega(x)=\omega(x^*\circ x)\leq \|x\|^2$, we have that $[\Cs;\CC]$ with the weak-$*$-topology is a subspace of the space $\prod_{x\in\Cs}\cj{B(0_\CC,\|x\|)}$, product of closed bounded disks in $\CC$, that is compact for the Tychonoff product topology. 
Since $[\Cs;\CC]$ coincides with the subset of $\prod_{x\in\Cs}\cj{B(0_\CC,\|x\|)}$ obtained intersecting all the closed subsets $\bigcap_{x,y\in\Cs, \ \alpha\in\CC}(\ev_{\alpha\cdot x+y}-\alpha\cdot\ev_x-\ev_y)^{-1}(0_\CC)$,  $\bigcap_{(x,y)\in\Cs^1\times_{\Cs^0}\Cs^1}(\ev_{x\circ y}-\ev_x\cdot\ev_y)^{-1}(0_\CC)$, $\bigcap_{A\in\Cs^0}(\ev_{\iota_A})^{-1}(1_\CC)$ and $\bigcap_{x\in\Cs} (\ev_{x^*}-\ev_x)^{-1}(0_\CC)$, we have that $[\Cs;\CC]$ is compact. 

\medskip 

To show the Hausdorff property of $[\Cs;\CC]$ notice that if $\omega_1\neq\omega_2$, there exists at least one $x\in\Cs^1$ such that $\ev_x(\omega_1)=\omega_1(x)\neq\omega_2(x)=\ev_x(\omega_2)$ and hence the continuous map $\ev_x$ takes different values in $\omega_1$ and $\omega_2$. 
\end{proof}

\begin{definition}
For every C*-category $\Cs\in\Af^0$ we define $\Rs_{\Cs^0}:=\Cs^0\times\Cs^0$ the pair groupoid with objects $\Cs^0$ equipped with the discrete topology; and we denote its morphisms by $AB:=(A,B)\in\Hom_{\Rs^\Cs}(B;A)$. 

\medskip 

We also define $\Rs^\Cs:=\Cs/\Cs$ the quotient C*-category of $\Cs$ by the closed categorical $*$-ideal $\Cs$ (see definition~\ref{def: C*-id}). 
\end{definition}
Notice that $\Rs^\Cs$ is a discrete groupoid isomorphic to the discrete pair groupoid $\Rs_{\Cs^0}:=\Cs^0\times\Cs^0$ of objects of $\Cs$. We also redundantly use the notation $\Os^\Cs:=\Cs^0$ and hence $\Rs^\Cs\simeq\Rs_{\Os^\Cs}$. 

\begin{remark}
Considering the pull-back $!^\bullet(\CC)$ of the Fell line-bundle $\CC\to \bullet$ via the terminal map $\Rs^\Cs\xrightarrow{!}\{\bullet\}$, we have a bijective correspondence $[\Cs;\CC]\ni \omega\mapsto \hat{\omega}\in\Hom_\Af(\Cs;!^\bullet(\CC))$ between $*$-functors $\omega\in[\Cs;\CC]$ and object-bijective $*$-functors $\Cs\xrightarrow{\hat{\omega}}!^\bullet(\CC)$ between commutative C*-categories. 

\medskip 

This ``desingularization'' of the target C*-category of the $*$-functors $\omega\in[\Cs;\CC]$ allows for the introduction of ``gauge fluctuations'' induced by the group $\Aut_\Af(!^\bullet(\CC))$ of automorphisms of the C*-category $!^\bullet(\CC)\in\Af^0$. 
\xqed{\lrcorner}
\end{remark}

\begin{definition} \label{def: -B}
We define $\Bs^\Cs$ as the set of orbits in $[\Cs;\CC]$ of the action $\Aut_\Af(!^\bullet(\CC))\times[\Cs;\CC]\to[\Cs;\CC]$ given by $\omega\mapsto\alpha\circ\omega$; 
considering the equivalence relation $E^\Cs$ and quotient map $q^\Cs_E$ 
\begin{equation}\label{eq: E}
E^\Cs:=\{(\omega_1,\omega_2)\in[\Cs;\CC]\times[\Cs;\CC] \ \ | \  \ \exists \alpha\in\Aut_\Af(!^\bullet(\CC)) \st \omega_2=\alpha\cdot\omega_1\}, 
\quad 
[\Cs;\CC]\xrightarrow{q^\Cs_E}\Bs^\Cs,
\end{equation}
we equip $\Bs^\Cs$ with its quotient topology.  
We will denote by $[\omega]\in\Bs^\Cs$ the equivalence class of $\omega\in[\Cs;\CC]$ and we will write $\omega_1\sim\omega_2$ whenever $[\omega_1]=[\omega_2]$.  
\end{definition}

\begin{lemma}\label{lem: aut}
The group $\Aut_\Af(!^\bullet(\CC))$ is isomorphic to the group $[\Rs^\Cs;!^\bullet(\TT)]$ of  
$*$-functorial sections of the torsor $!^\bullet(\TT)\xrightarrow{}\Rs^\Cs$. 
We have $\omega_1\sim\omega_2$ if and only if $\hat{\omega}_2=\alpha\cdot\hat{\omega}_1$, where $\alpha\in [\Rs^\Cs;!^\bullet(\TT)]$.  
The group $\Aut_\Af(!^\bullet(\CC))$ acts by homeomorphisms onto $[\Cs;\CC]$. With the topology of pointwise convergence $\Aut_\Af(!^\bullet(\TT))$ is a compact topological group. 
\end{lemma}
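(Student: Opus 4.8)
The plan is to identify an $\Af$-automorphism of $!^\bullet(\CC)$ with the collection of scalars by which it rescales each Hom-set, and to recognize these scalars as a $\TT$-valued cocycle on the pair groupoid. First I would record a structural reduction. Although a general $\Af$-morphism is allowed to permute objects, the object-permutation part of an automorphism of $!^\bullet(\CC)$ plays no role in the gauge action on $[\Cs;\CC]$: precomposing $\hat\omega$ with such an $\alpha$ and then collapsing along the canonical $*$-functor $!^\bullet(\CC)\to\CC$ (which forgets object labels and is the identity on fibres) yields an element whose $AB$-component depends only on the scalar that $\alpha$ attaches to the $AB$-fibre, not on $\alpha^0$. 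The compactness claim in the last sentence likewise forces the relevant group to be the vertical (identity-on-objects, i.e.\ fibred over $\Rs^\Cs$) automorphisms, since $\mathrm{Sym}(\Os^\Cs)$ is discrete and generally infinite. I would therefore work throughout with vertical automorphisms, and this reconciliation is the one genuinely delicate point of the argument.

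Next, for a vertical automorphism $\alpha$ I would analyse its restriction to each Hom-set. On a diagonal Hom-set $(!^\bullet(\CC))_{AA}=\CC$ the map $\alpha_{AA}$ is a unital $*$-homomorphism of $\CC$, hence $\alpha_{AA}=\id$. On an off-diagonal Hom-set $(!^\bullet(\CC))_{AB}=\CC$, $\CC$-linearity forces $\alpha_{AB}(z)=c_{AB}\,z$ for a unique scalar $c_{AB}\in\CC$. Functoriality applied to $(A,B)\circ(B,C)=(A,C)$ gives $c_{AC}=c_{AB}c_{BC}$, the $*$-functor condition gives $c_{BA}=\cj{c_{AB}}$, and $\alpha_{AA}=\id$ gives $c_{AA}=1$; combining the last two with the cocycle identity yields $c_{AB}\cj{c_{AB}}=c_{AB}c_{BA}=c_{AA}=1$, so every $c_{AB}$ lies in $\TT$. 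Thus $\alpha\mapsto(c_{AB})$ lands in the set of $*$-functorial sections $[\Rs^\Cs;!^\bullet(\TT)]$. Conversely any such section defines a vertical $*$-functor by $\alpha_{AB}(z):=c_{AB}z$, invertible with inverse furnished by the section $\cj{c}=c^{-1}$; checking that composition of automorphisms corresponds to the pointwise product of cocycles then gives the asserted group isomorphism.

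For the second assertion I would transport this along the bijection $\omega\mapsto\hat\omega$: since $\hat\omega$ is the identity on objects with $(\hat\omega)_{AB}=\omega_{AB}$, acting by $\alpha\leftrightarrow(c_{AB})$ produces $(\alpha\cdot\hat\omega)_{AB}=c_{AB}\,\omega_{AB}$, so two functionals lie in the same $\Aut_\Af(!^\bullet(\CC))$-orbit precisely when their off-diagonal components differ by a single cocycle $\alpha\in[\Rs^\Cs;!^\bullet(\TT)]$, which is the claim $\omega_1\sim\omega_2\iff\hat\omega_2=\alpha\cdot\hat\omega_1$. For the third assertion, fix $\alpha\leftrightarrow(c_{AB})$; for $x\in\Cs_{AB}$ one has $\ev_x(\alpha\cdot\omega)=c_{AB}\,\ev_x(\omega)$, so the action map rescales each generating evaluation by a fixed constant and is therefore continuous for the weak-$*$-topology of $[\Cs;\CC]$ (which is initial with respect to the family $\{\ev_x\}$); its inverse is the action of $\alpha^{-1}\leftrightarrow(\cj{c_{AB}})$, equally continuous, so $\alpha$ acts as a homeomorphism.

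Finally, for compactness I would realize $[\Rs^\Cs;!^\bullet(\TT)]$ as the subset of the Tychonoff product $\prod_{AB\in\Rs^\Cs}\TT$ cut out by the closed conditions $c_{AA}=1$, $c_{BA}=\cj{c_{AB}}$ and $c_{AC}=c_{AB}c_{BC}$. Each condition is the preimage of a point under a continuous map, hence closed, so the cocycle set is a closed subset of a compact space and thus compact; pointwise multiplication and conjugation are continuous on it, making it a compact topological group, and the topology of pointwise convergence is exactly the subspace topology it inherits. Transporting along the isomorphism of the first part endows $\Aut_\Af(!^\bullet(\CC))$ with the structure of a compact topological group, completing the proof. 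The only real obstacle is the bookkeeping around object permutations flagged in the first paragraph; once one fixes the convention that the gauge group consists of vertical automorphisms, every remaining step is a routine transcription of Gel'fand--Na\u\i mark and cocycle bookkeeping.
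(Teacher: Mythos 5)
Your proposal is correct and follows essentially the same route as the paper's proof: identify each automorphism with the family of scalars $\alpha_{AB}$ by which it rescales the one-dimensional fibres, deduce $\alpha_{AB}\in\TT$ from the $*$-functor identities $\alpha_{AC}=\alpha_{AB}\alpha_{BC}$, $\alpha_{BA}=\cj{\alpha_{AB}}$, $\alpha_{AA}=1$, prove the action is by homeomorphisms via $\ev_x\circ\mu_\alpha=\alpha_{AB}\cdot\ev_x$ on the weak-$*$ topology, and get compactness by realizing $[\Rs^\Cs;!^\bullet(\TT)]$ as a closed subgroup of the Tychonoff product $\prod_{AB\in\Rs^\Cs}\TT$. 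Your explicit reduction to vertical (object-preserving) automorphisms is exactly what the paper disposes of with its ``modulo a permutation'' caveat, so even the one point you flag as delicate is handled identically, just more carefully on your side.
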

\begin{proof}
Every $\alpha\in\Aut_\Af(!^\bullet(\CC))$ acts (modulo a permutation) as the identity on objects and is a fibrewise \hbox{$\CC$-linear} map; since the fibers of the bundle $!^\bullet(\CC)$ are one-dimensional, for all $AB\in\Rs^\Cs$, $\alpha$ acts as multiplication by a complex number $\alpha_{AB}$ on $\CC_{AB}$. 
The $*$-homomorphism properties of $\alpha$ immediately imply: $\alpha_{AB}\circ\alpha_{BC}=\alpha_{AC}$, $\alpha_{BA}=\alpha_{AB}^*$ and $\alpha_{AB}\circ\alpha_{BA}=\alpha_{AA}=\id_{\CC_{AA}}$, for all $A,B,C\in\Cs^0$. This means that $\alpha_{AB}\in\TT_{AB}$, for all $AB\in\Rs^\Cs$ and hence, modulo a permutation on objects, $\alpha$ is in bijective correspondence with a unique object preserving functorial map $\Rs^\Cs\to !^\bullet(\TT)$ (that with abuse of notation we will denote with the same letter) $AB\mapsto \alpha_{AB}$. 

\medskip 

Whenever $\omega_1\sim\omega_2$ we have, using the ubused notation above,  $\hat{\omega}_2(x)=\alpha(\hat{\omega}_1(x))=\alpha\cdot \omega_1(x)$, for all $x\in\Cs^1$. 

\medskip 

Since the action of $\alpha$ on $[\Cs;\CC]$ is bijective, with inverse the action of $\alpha^{-1}$, and since $[\Cs;\CC]$ is compact Hausdorff, we only need to show that the map $\mu_\alpha:\omega\mapsto \alpha\cdot \omega$ is continuous for the weak-$*$-topology of $[\Cs;\CC]$ and this follows from the fact that $\mu_\alpha^\bullet(\ev_x):=\ev_x\circ\mu_\alpha=\ev_{\alpha\cdot x}$, for all $x\in\Cs^1$. 

\medskip 

Finally $\Aut_\Af(!^\bullet(\CC))$ is a compact topological group: since it is isomorphic to $[\Rs^\Cs;!^\bullet(\TT)]$, that (with the pointwise convergence topology) is a subset of the compact Tychonoff product $\prod_{AB\in\Rs^\Cs}\TT$, and the $*$-functorial axioms, as in lemma~\ref{lem: cH}, imply that $[\Rs^\Cs;!^\bullet(\TT)]$ corresponds to a closed (and hence compact) subset (actually subgroup) of the compact topological group $\prod_{AB\in\Rs^\Cs}\TT$.
\end{proof}

\begin{lemma} \label{lem: E-closed}
The space $\Bs^\Cs$ is compact Hausdorff. 
The quotient map $[\Cs;\CC]\xrightarrow{q^\Cs_E}\Bs^\Cs$ is closed and open. 
\end{lemma}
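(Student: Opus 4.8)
The plan is to prove the three assertions—compactness of $\Bs^\Cs$, Hausdorffness of $\Bs^\Cs$, and that $q^\Cs_E$ is both closed and open—by systematically exploiting the group action structure established in Lemma~\ref{lem: aut}. Compactness is immediate: $\Bs^\Cs$ is the continuous image of the compact space $[\Cs;\CC]$ (Lemma~\ref{lem: cH}) under the quotient map $q^\Cs_E$, and continuous images of compact spaces are compact. So the real work lies in the topological properties that hinge on the fact that $\Bs^\Cs=[\Cs;\CC]/\Aut_\Af(!^\bullet(\CC))$ is a quotient by a \emph{compact group} acting by homeomorphisms.

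First I would record the standard fact that when a compact topological group $G$ acts continuously by homeomorphisms on a space $X$, the quotient map $q:X\to X/G$ is open: for any open $\U\subset X$, the saturation $q^{-1}(q(\U))=\bigcup_{\alpha\in G}\alpha\cdot\U=\bigcup_{\alpha\in G}\mu_\alpha(\U)$ is a union of open sets (each $\mu_\alpha$ is a homeomorphism by Lemma~\ref{lem: aut}), hence open, so $q(\U)$ is open by definition of the quotient topology. For \emph{closedness} of $q$ I would use compactness of $G$ crucially: the saturation of a closed set $\Ks\subset[\Cs;\CC]$ is the image of $\Aut_\Af(!^\bullet(\CC))\times\Ks$ under the (continuous) action map; since $\Aut_\Af(!^\bullet(\CC))$ is compact and $\Ks$ is compact (closed in a compact space), the product is compact, its continuous image is compact, hence closed in the Hausdorff space $[\Cs;\CC]$. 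Therefore $q^{-1}(q(\Ks))$ is closed and $q(\Ks)$ is closed in the quotient.

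For the Hausdorff property, the cleanest route is to show that the equivalence relation $E^\Cs$ is a \emph{closed} subset of $[\Cs;\CC]\times[\Cs;\CC]$; combined with the openness of $q^\Cs_E$ just established, this yields Hausdorffness of the quotient by a well-known criterion (an open quotient map has Hausdorff target if and only if the graph of the equivalence relation is closed in the product). To see $E^\Cs$ is closed, I would express it as the image of the compact set $\Aut_\Af(!^\bullet(\CC))\times[\Cs;\CC]$ under the continuous map $(\alpha,\omega)\mapsto(\omega,\alpha\cdot\omega)$; the domain is a product of two compact spaces (Lemma~\ref{lem: aut} gives compactness of the group, Lemma~\ref{lem: cH} gives it for $[\Cs;\CC]$), hence compact, so its image is compact and therefore closed in the Hausdorff space $[\Cs;\CC]\times[\Cs;\CC]$.

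The main obstacle I anticipate is not conceptual but bookkeeping: one must be careful that the action map $\Aut_\Af(!^\bullet(\CC))\times[\Cs;\CC]\to[\Cs;\CC]$, $(\alpha,\omega)\mapsto\alpha\cdot\omega$, is \emph{jointly} continuous (continuity in $\omega$ alone, as verified in Lemma~\ref{lem: aut} via $\mu_\alpha^\bullet(\ev_x)=\ev_{\alpha\cdot x}$, is not by itself enough for the compactness-image arguments). Here I would check joint continuity directly against the subbasis of the weak-$*$-topology on the target: the composite $(\alpha,\omega)\mapsto\ev_x(\alpha\cdot\omega)=\alpha_{t(x)s(x)}\cdot\omega(x)$ is continuous because it factors through the pointwise-evaluation $\alpha\mapsto\alpha_{t(x)s(x)}\in\TT$ (continuous for the pointwise-convergence topology on $\Aut_\Af(!^\bullet(\CC))\cong[\Rs^\Cs;!^\bullet(\TT)]$) and the continuous $\ev_x:\omega\mapsto\omega(x)$, multiplied together in $\CC$. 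Once joint continuity is secured, all three remaining claims follow from the compact-group-quotient machinery outlined above.
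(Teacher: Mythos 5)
Your proof is correct and reaches all three conclusions, but it carries out the key steps by a somewhat different route than the paper, and it patches a point the paper leaves implicit. The shared skeleton is the same: compactness of $\Bs^\Cs$ as a continuous image of the compact $[\Cs;\CC]$, openness of $q^\Cs_E$ via saturation by the homeomorphisms $\mu_\alpha$, and Hausdorffness reduced to closedness of the relation $E^\Cs$. The divergences are these. For closedness of $E^\Cs$, the paper builds the continuous map $F:(\omega_1,\alpha,\omega_2)\mapsto\omega_2-\alpha\cdot\omega_1$ into a product of closed disks, takes the closed set $F^{-1}(0)$, and then uses that the projection $(\omega_1,\alpha,\omega_2)\mapsto(\omega_1,\omega_2)$ along the compact factor $\Aut_\Af(!^\bullet(\CC))$ is a closed map; you instead exhibit $E^\Cs$ directly as the continuous image of the compact set $\Aut_\Af(!^\bullet(\CC))\times[\Cs;\CC]$ under $(\alpha,\omega)\mapsto(\omega,\alpha\cdot\omega)$, hence compact, hence closed in the Hausdorff product --- a shorter argument using the same ingredients. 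For closedness of $q^\Cs_E$, the paper obtains it from the cited equivalence in Willard (for a quotient of a compact Hausdorff space: target Hausdorff $\iff$ map closed $\iff$ relation closed), whereas you prove it directly: the saturation $q^{-1}(q(\Ks))$ of a closed set $\Ks$ is the compact image of $\Aut_\Af(!^\bullet(\CC))\times\Ks$ under the action, hence closed. Your Hausdorffness criterion (open quotient map plus closed relation) is the one the paper only mentions in passing via its second Willard citation. Finally, the genuine added value of your write-up: both arguments (yours and the paper's continuity claim for $F$) require \emph{joint} continuity of the action $(\alpha,\omega)\mapsto\alpha\cdot\omega$, which Lemma~\ref{lem: aut} does not establish (it only treats each $\mu_\alpha$ separately); you verify this explicitly against the weak-$*$ subbasis via $\ev_x(\alpha\cdot\omega)=\alpha_{t(x)s(x)}\cdot\omega(x)$, while the paper asserts the continuity of $F$ without comment. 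Both proofs are sound; yours is more self-contained, the paper's leans more heavily on the general-topology references it cites.
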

\begin{proof}
Since by lemma~\ref{lem: cH} $[\Cs;\CC]$ is compact, its quotient space $\Bs^\Cs$ is compact as well. 

\medskip 
 
In order to prove that the quotient space $\Bs^\Cs$ is Hausdorff, we make use of~\cite[exercise 17N]{Wi04} \footnote{
If $X$ is compact Hausdorff space and $f:X\to Y$ is a surjective quotient map for the equivalence relation $E\subset X\times X$, then the following three properties are  equivalent: $Y$ is Hausdorff $\iff$ $f$ is closed $\iff$ the equivalence relation $E$ is closed in $X\times X$. 
} just proving that the equivalence relation $E^\Cs\subset[\Cs;\CC]\times[\Cs;\CC]$ defined in~\eqref{eq: E} is closed:   
since, for all triples $(\omega_1,\alpha,\omega_2)\in[\Cs;\CC]\times\Aut_\Af(!^\bullet(\CC))\times[\Cs;\CC]$, we have $|\omega_2-\alpha\cdot\omega_1(x)|\leq |\omega_2(x)|+|\omega_1(x)|\leq2\|x\|$, for all $x\in\color{brown}\Cs^1$, 
the map $[\Cs,\CC]\times\Aut_\Af(!^\bullet(\CC))\times[\Cs;\CC]\xrightarrow{F} \prod_{x\in\Cs}\cj{B(0_\CC,2\|x\|)}$ given by $(\omega_1,\alpha,\omega_2)\mapsto \omega_2-\alpha\cdot \omega_1$ is well-defined and continuous for the respective product topologies and hence 
$\{(\omega_1,\alpha,\omega_2) \ | \ \omega_2=\alpha\cdot\omega_1\}=F^{-1}(0_{\CC^\Cs})$ is closed; 
since $\Af(!^\bullet(\CC))$ and $[\Cs;\CC]$ are compact Hausdorff, the continuous projection $(\omega_1,\alpha,\omega_2)\xrightarrow{P} (\omega_1,\omega_2)$ is necessarily closed and so is the set $P(F^{-1}(0_{\CC^\Cs}))=E^\Cs$. 

\medskip

The quotient map $[\Cs;\CC]\xrightarrow{q^\Cs_E}\Bs^\Cs$ is also open (apart from closed): since, by lemma~\ref{lem: aut}, the group $\Aut(!^\bullet(\CC))$ is acting by homeomorphisms onto $[\Cs;\CC]$, if $U\subset [\Cs;\CC]$ is open, also $\bigcup_{\alpha\in\Aut(!^\bullet(\CC))} \alpha\cdot U=(q_E^\Cs)^{-1}(q^\Cs_E(U))$ is open and so $q_E(U)\subset\Bs^\Cs$ is open in the quotient. This property, by \cite[theorem 13.12]{Wi04}, provides another proof that $\Bs^\Cs$ is Hausdorff if and only if $E^\Cs\subset [\Cs;\CC]\times[\Cs;\Cs]$ is closed. 
\end{proof}

We need to recall some material on $*$-ideals in a C*-category (see~\cite{Mi02}). 
\begin{definition}\label{def: C*-id}
A \emph{categorical $*$-ideal} in a C*-category (or more generally in an involutive algebroid) $\Cs$ is a family $\Js\subset \Cs^1$ that satisfies the following ``algebraic closure'' conditions, that are a horizontal categorification of the the usual properties of $*$-ideals in a C*-algebra (or in an involutive ring): 
\begin{gather*}
\forall A\in\Cs^0 \st o_A\in \Js, 
\quad \quad 
\forall x,y\in\Js \st x+y\in\Cs^1 \imp x+y \in \Js, 
\quad \quad 
\forall x\in\Js \st -x\in\Js, 
\\
\forall x\in\Js \st x^*\in\Js, 
\quad \quad 
\forall x,z\in\Cs^1, \ \forall y\in\Js \st (x\circ y\in\Cs^1 \imp x\circ y\in\Js)\wedge
(y\circ z\in\Cs^1 \imp y\circ z\in\Js), 
\end{gather*} 
the first line simply says that, for all $A,B\in\Cs^0$, $\Js_{AB}$ is an Abelian subgroup of $\Cs_{AB}$; the remaining conditions impose the closure under involution and the usual ``stability'' under arbitrary compositions for ideals.

\medskip 

We will use the notation $\Cs/\Is$ to denote the quotient category of the category $\Cs$ by its categorical ideal $\Is$. 
\end{definition} 

The following special family of C*-categories (Fell bundles over pair groupoids of rank less or equal to $1$) plays in our spectral theory of non-full C*-categories the same role (target of characters) of complex numbers for the usual spectral theory of commutative unital C*-algebras. 
\begin{definition}
A \emph{C*-category of rank less or equal to $1$} is a C*-category $\Cs$ such that $\max_{AB\in\Rs^\Cs}\dim(\Cs_{AB})=1$. 
\end{definition}
Classifications, modulo isomorphism, of full C*-categories of rank less or equal to one are known: every equivalence class corresponds to a $\CC$-line bundle associated to a $\TT$-torsor over $\Rs^\Cs$ and hence it is uniquely determined by a choice of a functorial section in $[\Rs^\Cs;\TT]$. Here we provide a description in the non-full case. 
\begin{lemma}
A C*-category of rank less or equal to 1 consists of full C*-subcategories in discrete position. 
\end{lemma}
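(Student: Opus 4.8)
The statement asserts that a C*-category $\Cs$ of rank $\leq 1$ decomposes as a collection of full C*-subcategories that are pairwise in discrete position. Recall from the earlier definitions that $\Cs$ has rank $\leq 1$ when every Hom-set $\Cs_{AB}$ is at most one-dimensional, and two subcategories are in discrete position when all Hom-sets between their object-sets vanish. So the goal is to show that the object set $\Os^\Cs = \Cs^0$ partitions into blocks, where within a block every off-diagonal Hom-set is genuinely one-dimensional (and thus full, being an imprimitivity bimodule between one-dimensional C*-algebras), and between distinct blocks every Hom-set is zero.

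**The approach.** The plan is to define the partition directly from the connectivity structure induced by non-vanishing Hom-sets. First I would observe that, since $\Cs$ is a C*-category, each diagonal algebra $\Cs_{AA}$ is unital; rank $\leq 1$ forces $\dim(\Cs_{AA}) = 1$, so $\Cs_{AA} \cong \CC$ for every $A \in \Cs^0$. Next, for objects $A, B \in \Cs^0$, I would declare $A \approx B$ whenever $\Cs_{AB} \neq \{0\}$, equivalently $\dim(\Cs_{AB}) = 1$. The key step is to verify that $\approx$ is an equivalence relation. Reflexivity is immediate since $\Cs_{AA} \cong \CC \neq \{0\}$. Symmetry follows from the involution: $\Cs_{AB} \neq \{0\} \iff \Cs_{BA} = \Cs_{AB}^* \neq \{0\}$. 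Transitivity is the crucial point and the step I would expect to be the main obstacle.

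**The transitivity argument.** Suppose $\Cs_{AB} \neq \{0\}$ and $\Cs_{BC} \neq \{0\}$; I must show $\Cs_{AC} \neq \{0\}$. Pick nonzero $x \in \Cs_{AB}$ and $y \in \Cs_{BC}$. The natural candidate element of $\Cs_{AC}$ is $x \circ y$, but I must rule out that $x \circ y = 0$. Here I would exploit that $\Cs_{BB} \cong \CC$: the right inner product $\langle x \mid x\rangle_B = x^* \circ x \in \Cs_{BB}$ is positive and, since $x \neq 0$, the C*-identity $\|x^* \circ x\| = \|x\|^2 \neq 0$ shows $x^* \circ x \neq 0$, hence $x^* \circ x$ is a strictly positive multiple of the unit $\iota_B$ (as $\Cs_{BB} \cong \CC$). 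Similarly $y \circ y^* = {}_B\langle y \mid y\rangle \neq 0$ is a positive multiple of $\iota_B$. I would then compute the norm of $x \circ y$ via $(x \circ y)^* \circ (x \circ y) = y^* \circ (x^* \circ x) \circ y = \lambda\, (y^* \circ y)$ for some $\lambda > 0$ (using that $x^* \circ x = \lambda \iota_B$), which is a nonzero positive element of $\Cs_{CC}$; thus $\|x \circ y\|^2 = \|(x\circ y)^* \circ (x \circ y)\| = \lambda \|y\|^2 \neq 0$, so $x \circ y \neq 0$ and $\Cs_{AC} \neq \{0\}$. This is the heart of the proof and the only place where a genuine computation is needed.

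**Assembling the decomposition.** With $\approx$ established as an equivalence relation, let $\{\Os_i\}_{i \in I}$ be its equivalence classes and let $\Ss_i$ be the full subcategory of $\Cs$ on object set $\Os_i$. By construction, if $A \in \Os_i$ and $B \in \Os_j$ with $i \neq j$ then $A \not\approx B$, so $\Cs_{AB} = \{0\}$; this is precisely the statement that distinct $\Ss_i, \Ss_j$ are in discrete position. It remains to note that each $\Ss_i$ is a full C*-category in the sense of fullness of bimodules: within a class every off-diagonal Hom-set is one-dimensional over the one-dimensional diagonal algebras, so each $\Ss_{AB}$ is an imprimitivity bimodule (the inner-product ideals ${}_A\langle \Ss_{AB} \mid \Ss_{AB}\rangle$ and $\langle \Ss_{AB} \mid \Ss_{AB}\rangle_B$ are nonzero ideals in the one-dimensional algebras $\Cs_{AA} \cong \CC$ and $\Cs_{BB} \cong \CC$, hence equal to the whole algebras). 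This yields the claimed decomposition into full C*-subcategories in discrete position, completing the proof.
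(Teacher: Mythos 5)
Your proof is correct and follows essentially the same route as the paper: both define the relation $A\sim B$ if and only if $\dim(\Cs_{AB})=1$ (equivalently, $A$ and $B$ are not in discrete position) and take the equivalence classes as the blocks of the decomposition into full C*-subcategories in discrete position. The only difference is one of detail: the paper justifies transitivity with the single phrase ``since $\Cs_{AB}\circ\Cs_{BC}\subset\Cs_{AC}$,'' while you supply the actual argument that inclusion alone does not give --- namely that the composite of nonzero elements is nonzero, via $x^*\circ x=\lambda\,\iota_B$ with $\lambda>0$ in $\Cs_{BB}\cong\CC$ and the C*-identity --- which is the one genuinely non-trivial point of the lemma.
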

\begin{proof}
Define on the class $\Cs^0$ of objects the relation $A\sim B$ if and only if $\dim(\Cs_{AB})=1$ or equivalently $A$ and $B$ are not in discrete position. Since $\Cs_{AB}\circ\Cs_{BC}\subset\Cs_{AC}$, we see that the previous is an equivalence relation on the class $\Cs^0$. All the full-subcategories corresponding to an equivalence class of objects are full C*-categories of rank 1 and any two of them are in discrete position (if they do not coincide). 
\end{proof}

\begin{lemma}\label{lem: ke}
For all $\omega\in[\Cs;\CC]$, the kernel $\ke(\omega):=\{x\in\Cs^1 \ | \ \omega(x)=0_\CC\}$ is a closed categorical $*$-ideal in $\Cs$. 
Furthermore, using the equivalence relation introduced in definition~\ref{def: -B},  
$\omega_1\sim\omega_2\imp \ke(\omega_1)=\ke(\omega_2)$ and $\Cs/\ke(\omega)$ is a C*-category of rank less or equal to one. 
\end{lemma}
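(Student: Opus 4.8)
The plan is to treat the three assertions in turn, each reducing to elementary properties of the $*$-functor $\omega$ that were already implicit in the proof of lemma~\ref{lem: cH}.

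First I would verify that $\ke(\omega)$ satisfies the closure conditions of definition~\ref{def: C*-id}. Since $\omega$ is $\CC$-linear on each Hom-set, $\omega(o_A)=0$ for every $A\in\Cs^0$ and $\omega(-x)=-\omega(x)$, while $\omega(x+y)=\omega(x)+\omega(y)=0$ whenever $x,y\in\ke(\omega)$ are addable; thus each $\ke(\omega)\cap\Cs_{AB}$ is an additive subgroup of $\Cs_{AB}$. Closure under involution follows from $\omega(x^*)=\overline{\omega(x)}$ (as $\omega$ is a $*$-functor), and stability under composition follows from functoriality $\omega(x\circ y)=\omega(x)\,\omega(y)$: if either factor lies in $\ke(\omega)$, the product vanishes. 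For closedness I would use the estimate $|\omega(x)|\leq\|x\|$ established in lemma~\ref{lem: cH}: each restriction $\omega_{AB}:=\omega|_{\Cs_{AB}}$ is a bounded, hence continuous, linear functional, so $\ke(\omega)\cap\Cs_{AB}=\omega_{AB}^{-1}(\{0_\CC\})$ is closed in $\Cs_{AB}$; since $\Cs^1$ is the (clopen) disjoint union of its Hom-sets, $\ke(\omega)$ is closed in $\Cs^1$.

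For the second assertion I would invoke lemma~\ref{lem: aut}: if $\omega_1\sim\omega_2$, then $\omega_2=\alpha\cdot\omega_1$ for some $\alpha\in\Aut_\Af(!^\bullet(\CC))$, and $\alpha$ acts on each Hom-set $\Cs_{AB}$ as multiplication by a scalar $\alpha_{AB}\in\TT$. Because $|\alpha_{AB}|=1$, one has $\omega_2(x)=\alpha_{AB}\,\omega_1(x)$ for $x\in\Cs_{AB}$, whence $\omega_2(x)=0_\CC$ if and only if $\omega_1(x)=0_\CC$; therefore $\ke(\omega_1)=\ke(\omega_2)$.

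Finally, for the rank statement I would identify the quotient Hom-set $(\Cs/\ke(\omega))_{AB}=\Cs_{AB}/(\ke(\omega)\cap\Cs_{AB})$ with the image $\omega_{AB}(\Cs_{AB})\subseteq\CC$ via the injection induced by $\omega_{AB}$. As $\omega_{AB}$ is $\CC$-linear into the one-dimensional space $\CC$, its image is either $\{0_\CC\}$ or all of $\CC$, so $\dim (\Cs/\ke(\omega))_{AB}\leq 1$; moreover the diagonal restrictions $\omega_{AA}$ are states (characters of the commutative unital C*-algebra $\Cs_{AA}$), so $\omega_{AA}(\iota_A)=1_\CC\neq 0_\CC$ and the diagonal quotients are exactly one-dimensional. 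Since quotienting by $\ke(\omega)$ leaves the object class $\Cs^0$ (hence $\Rs^\Cs$) unchanged, the maximum of the Hom-set dimensions is attained and equals $1$. I expect the only genuine subtlety—the main obstacle—to be the bookkeeping that $\ke(\omega)$ is a \emph{closed} categorical $*$-ideal, so that the quotient $\Cs/\ke(\omega)$ is again a (commutative) C*-category in the first place; this rests on the standard theory of quotients of C*-categories by closed $*$-ideals (see~\cite{Mi02}), after which the rank bound is immediate from the codimension computation above.
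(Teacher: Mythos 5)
Your proof is correct and follows essentially the same route as the paper's: the algebraic ideal axioms from linearity, functoriality and involutivity of $\omega$, topological closedness from the bound $|\omega(x)|\leq\|x\|$ of lemma~\ref{lem: cH}, the kernel-invariance via lemma~\ref{lem: aut} (the action of $\Aut_\Af(!^\bullet(\CC))$ by unimodular scalars), and the rank bound via the first-isomorphism-theorem injection of each quotient Hom-set into $\CC$, with the C*-structure of the quotient delegated to Mitchener exactly as in the paper. Your additional observation that the diagonal quotients are exactly one-dimensional (because $\omega_{AA}(\iota_A)=1_\CC$), so that the maximum in the definition of rank is actually attained, is a small sharpening that the paper's proof leaves implicit.
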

\begin{proof}
Since $\omega$ is $\CC$-linear when restricted to any Hom-set, $\ke(\omega)$ is closed under addition, opposite and zeros. 

Whenever $x\circ y$ and $y\circ z$ exist and $y\in\ke(\omega)$, by the functorial properties of $\omega$, we have $x\circ y,y\circ z\in\ke(\omega)$, since 
$\omega(x\circ y)=\omega(x)\cdot\omega(y)=0_\CC=\omega(y)\cdot\omega(z)=\omega(y\circ z)$. 
Similarly the fact that $\omega$ is involutive gives $\omega(x^*)=\cj{\omega(x)}$, hence $x^*\in\ke(\omega)$ whenever $x\in\ke(\omega)$. 
Since $\ke(\omega)=\omega^{-1}(0_\CC)$, the topological closure of $\ke(\omega)$ is a consequence of the continuity of $\omega$ and the fact that $\{0_\CC\}$ is a closed set in $\CC$. 

\medskip 

Since, by lemma~\ref{lem: aut}, $\omega_1\sim\omega_2$ if and only if there exists $\alpha\in \Aut(!^\bullet(\CC))$ such that $\omega_2=\alpha\cdot \omega_1$, since $\alpha$ is invertible, we have $x\in\ke(\omega_1)\iff x\in\ke(\omega_2)$. It follows that the closed $*$-ideal $\ke(\omega)$ is independent from the choice of the representative in the equivalence class $[\omega]\in\Bs^\Cs:=[\Cs;\CC]/\sim$.  

\medskip 

It is a general ``algebraic fact'' (see~\cite{Mi02}) that the quotient $\Cs/\ke(\omega)$ of a C*-category $\Cs$ by a closed $*$-ideal $\ke(\omega)$ is again a C*-category, with norm defined as $\|x+\ke(\omega)\|_{\Cs/\ke(\omega)}:=\inf_{k\in\ke(\omega)}\|x+k\|_\Cs$, and the quotient map $\Cs\xrightarrow{\pi^\Cs_{[\omega]}}\Cs/\ke(\omega)$ is a $*$-functor. Since $\Cs$ and $\Cs/\ke(\omega)$ share the same objects, they are both Fell bundles over the discrete pair groupoid $\Rs^\Cs:=\Cs^0\times\Cs^0$ with fibers $\Cs_{AB}$ and $\Cs_{AB}/\ke(\omega)_{AB}$. 

\medskip 

Since, for all $AB\in\Rs^\Cs$, there is an injective $\CC$-linear map $\Cs_{AB}/\ke(\omega)_{AB}\xrightarrow{}\CC$ induced via first-isomorphism theorem and $\dim(\CC)=1$, the fibers $\Cs_{AB}/\ke(\omega)_{AB}$ can only have dimension $0$ of $1$ and so $\rk(\pi^\Cs_{[\omega]})\leq 1$.
\end{proof}

\begin{remark}
We could immediately develop a spectral theory with spectral spaceoids defined as bundles of C*-categories (of rank less or equal to one) over $\Bs^\Cs$ (see definition~\ref{def: -B}) or equivalently as Fell bundles, of rank less or equal to one, on the product topological groupoids $\Delta_{\Bs^\Cs}\times\Rs^\Cs$; the unpleasant feature of such choice is that the topological space $\Bs^\Cs$ is outrageously redundant: for example, in the case of a discrete commutative C*-category $\Cs$, it is homeomorphic to the Tychonoff product of the Gel'fand spectra of all the C*-algebras $\Cs_{AA}$, with $A\in\Cs^0$. 
In order to make more explicit contact with the Fell line-bundle spaceoids used in~\cite{BCL11}, we decided to pursue here a more ``economic'' direction.
\xqed{\lrcorner} 
\end{remark}

\medskip 

We are going to further analyze the base spectrum of our spectral spaceoids associating to $\Bs^\Cs$ a certain topological groupoid fibered over a discrete pair groupoid, where fibers are ``graphs of homeomorphisms'' between open sets in Gel'fand spectra of the C*-algebras $\Cs_{AA}$, for $A\in\Cs^0$. 

\begin{lemma}
For $AB\in\Rs^\Cs$, consider the restriction map $[\Cs;\CC]\xrightarrow{|_{AB}}[\Cs;\CC]_{AB}$ defined by $\omega\mapsto \omega_{AB}:=\omega|_{\Cs_{AB}}$. 
The space $[\Cs;\CC]_{AB}$ with the weak-$*$-topology induced by the maps $\{\ev_x \ | \ x\in\Cs_{AB}\}$ 
is canonically homeomorphic to the quotient space of $[\Cs;\CC]$ induced by the restriction map $|_{AB}$ and is compact Hausdorff.
\end{lemma}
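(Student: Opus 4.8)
The plan is to realize the restriction map $[\Cs;\CC]\xrightarrow{|_{AB}}[\Cs;\CC]_{AB}$ as a continuous surjection from a compact space onto a Hausdorff space, and then appeal to the elementary fact that a continuous bijection from a compact space to a Hausdorff space is a homeomorphism. Throughout I would use that $[\Cs;\CC]$ is compact Hausdorff by lemma~\ref{lem: cH}, and that each $\omega\in[\Cs;\CC]$, being $\CC$-linear on every Hom-set, restricts to a well-defined $\CC$-linear functional $\omega_{AB}$ on $\Cs_{AB}$, so the codomain is genuinely a set of functions on $\Cs_{AB}$.

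First I would check that $[\Cs;\CC]_{AB}$, with the weak-$*$-topology induced by $\{\ev_x \ | \ x\in\Cs_{AB}\}$, is Hausdorff. This is immediate from the fact that its elements are functions on $\Cs_{AB}$: if $\omega^1_{AB}\neq\omega^2_{AB}$, then there is some $x\in\Cs_{AB}$ with $\omega^1(x)\neq\omega^2(x)$, so the continuous map $\ev_x:[\Cs;\CC]_{AB}\to\CC$ separates the two points, exactly as in the Hausdorff argument of lemma~\ref{lem: cH}.

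Next I would verify continuity of $|_{AB}$. Since $[\Cs;\CC]_{AB}$ carries the initial topology for the family $\{\ev_x \ | \ x\in\Cs_{AB}\}$, it suffices to observe that $\ev_x\circ|_{AB}=\ev_x$ as maps on $[\Cs;\CC]$ for each $x\in\Cs_{AB}$; the latter are continuous for the weak-$*$-topology on $[\Cs;\CC]$ by construction, so $|_{AB}$ is continuous, and it is surjective onto its codomain by definition.

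Finally, let $Q:=[\Cs;\CC]/{\sim}$ be the quotient by the relation $\omega_1\sim\omega_2\iff\omega_1|_{\Cs_{AB}}=\omega_2|_{\Cs_{AB}}$, equipped with the quotient topology and quotient map $q$. This relation is precisely the fiber relation of $|_{AB}$, so $|_{AB}$ factors canonically as $|_{AB}=\phi\circ q$ for a unique bijection $\phi:Q\to[\Cs;\CC]_{AB}$. By the universal property of the quotient topology, $\phi$ is continuous, while $Q=q([\Cs;\CC])$ is compact as the continuous image of the compact space $[\Cs;\CC]$. A continuous bijection from a compact space to a Hausdorff space is a homeomorphism, so $\phi$ is the asserted canonical homeomorphism; in particular $[\Cs;\CC]_{AB}$ is compact (being homeomorphic to $Q$) and Hausdorff. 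I do not expect any genuine obstacle here: the only points needing a line of care are that the evaluations $\ev_x$ with $x\in\Cs_{AB}$ separate the restricted functionals and that $\sim$ coincides with the fibers of $|_{AB}$, both of which are essentially definitional.
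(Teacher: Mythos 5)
Your proof is correct, and its skeleton matches the paper's: surjectivity of $|_{AB}$ holds by definition of the codomain, continuity follows from $\ev_x\circ|_{AB}=\ev_x$ for $x\in\Cs_{AB}$ together with the initial-topology characterization, Hausdorffness of the weak-$*$-topology comes from separation by evaluations, and compactness of $[\Cs;\CC]_{AB}$ is the continuous surjective image of the compact space $[\Cs;\CC]$ (lemma~\ref{lem: cH}). Where you genuinely diverge is in the final identification with the quotient. The paper proves that the $|_{AB}$-quotient topology is itself Hausdorff, by showing that the equivalence relation $\{(\omega,\rho)\ |\ \omega_{AB}=\rho_{AB}\}=\bigcap_{x\in\Cs_{AB}}(\ev_x\circ\varpi_1-\ev_x\circ\varpi_2)^{-1}(0_\CC)$ is closed in $[\Cs;\CC]\times[\Cs;\CC]$ and invoking the characterization of Hausdorff quotients of compact Hausdorff spaces by closed relations \cite[exercise 17N]{Wi04}; it then concludes that the two comparable compact topologies on $[\Cs;\CC]_{AB}$ coincide. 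You instead factor $|_{AB}=\phi\circ q$ through the quotient $Q$ and apply the elementary criterion that a continuous bijection from a compact space onto a Hausdorff space is a homeomorphism, so Hausdorffness of the quotient topology is obtained for free from the homeomorphism rather than established beforehand. Your route is shorter and requires no closedness verification at all; the paper's route has the side benefit of exhibiting the closed relation explicitly, a pattern it reuses in nearby arguments (lemma~\ref{lem: E-closed}, remark~\ref{rem: X-topology}) and which one must fall back on when no convenient Hausdorff comparison topology is at hand. Both arguments are complete and yield the full statement.
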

\begin{proof} 
The set $[\Cs;\CC]_{AB}$ is by definition consisting of restrictions of $*$-functors in $[\Cs;\CC]$ and hence the restriction map is surjective. The weak-$*$-topology induced by evaluations is Hausdorff and makes the restriction map continuous (since it preserves evaluations: $\ev_x^{\Cs_{AB}}\circ \ |_{AB}=\ev^\Cs_x$, for all $x\in\Cs_{AB}\subset \Cs$). Compactness follows since $|_{AB}$ is continuous surjective and $[\Cs;\CC]$ is already compact with its weak-$*$-topology.

\medskip 

By surjectivity of the restriction map $|_{AB}$, the set $[\Cs;\CC]_{AB}$ is already in bijective correspondence with the quotient of $[\Cs;\CC]$ induced by the restriction. Since $|_{AB}$ is continuous, the weak-$*$-topology on $[\Cs;\CC]_{AB}$ is weaker than the $|_{AB}$-quotient topology. Since both spaces are compact, they will be homeomorphic via $|_{AB}$ as soon as we show that the quotient topology is Hausdorff. This follows, again by~\cite[exercise 17N]{Wi04}, showing the closure of the equivalence relation $\Big\{(\omega,\rho)\in[\Cs;\CC]\times[\Cs;\CC]\ | \ \omega_{AB}=\rho_{AB}\Big\}=\bigcap_{x\in\Cs_{AB}}(\ev^\Cs_x\circ\varpi_1 -\ev^\Cs_x\circ\varpi_2)^{-1}(0_\CC)$, where $\varpi_j$, for $j=1,2$, denote the projections of the product $[\Cs;\CC]\times[\Cs;\CC]$.  
\end{proof}

\begin{remark}\label{rem: X-topology}
Notice that $\omega_{AB}$, for $\omega\in[\Cs;\CC]$ and $AB\in\Rs^\Cs$, already denotes the equivalence class in the product $[\Cs;\CC]\times\Rs^\Cs$ induced by the restriction maps $|_{AB}$: 
$\omega_{AB}=\rho_{AB}$ if and only if $\omega,\rho\in[\Cs;\CC]$ and $\omega|_{AB}=\rho|_{AB}$.  
We have the following commutative diagram of quotient (open and closed) maps  
\begin{equation*}
\xymatrix{
[\Cs;\CC] \times\Rs^\Cs \ar[rrrr]^{\text{Hom-set restriction}} \ar[d]|{\text{global torsor orbits}} 
& & & & \bigcup_{AB\in\Rs^\Cs} [\Cs;\CC]_{AB} \ar[d]|{\text{local torsor orbits}} 
& 
(\omega,AB) \ar@{|->}[rrr] \ar@{|->}[d] & & &  \omega_{AB} \ar@{|->}[d] 
\\
\Bs^\Cs\times\Rs^\Cs \ar[rrrr]^{\text{quotient Hom-set restriction}} & & & &  \bigcup_{AB\in\Rs^\Cs} \Bs^\Cs_{AB}
& 
([\omega],AB) \ar@{|->}[rrr] & & &  [\omega]_{AB}
}
\end{equation*}

\medskip 

We have $[\omega_1]_{AB}\sim[\omega_2]_{AB}$ if and only if $\omega_1|_{AB}=u\cdot\omega_2|_{AB}$, for a phase $u\in\TT$ hence $\ke(\omega_1|_{AB})=\ke(\omega_2|_{AB})$. 

\medskip 

Notice that, for $A\in\Os^\Cs$, the equivalence class $[\omega]_{AA}=\{\omega_{AA}\}$ is a singleton consisting of a pure-state on $\Cs_{AA}$. 

\medskip 

All the topological spaces $\Bs^\Cs_{AB}$, for all $AB\in\Rs^\Cs$ are compact Hausdorff in their quotient topology (to show the Hausdorff property of $\Bs^\Cs_{AB}$ just apply to the compact Hausdorff space $[\Cs;\CC]_{AB}$ the same proof of lemma~\ref{lem: E-closed} showing that the equivalence relation $E^\Cs_{AB}\subset[\Cs;\CC]_{AB}\times[\Cs;\CC]_{AB}$ is closed) and hence all the continuous restriction and quotient maps of the previous commutative diagram are closed (and also open, by surjectivity).  
\xqed{\lrcorner}
\end{remark}
 
The following family of topological spaces generalizes the unique compact Hausdorff space in the case of spaceoids for full commutative C*-categories in \cite[definition 3.2]{BCL11} (where $\Xs^\Cs_{AB}$ are all homeomorphic, for all $AB\in\Rs^\Cs$). 

\begin{definition}
Given a small non-full C*-category $\Cs$, for all $AB\in\Rs^\Cs$ we define: 
\begin{equation*}
\Xs^\Cs_{AB}:=\Big\{[\omega]_{AB} \in\Bs^\Cs_{AB} \ | \ \dim_\CC\Big(\Cs_{AB}/\ke(\omega|_{AB})\Big)=1 \Big\}. 
\end{equation*}
Equivalently $\Xs^\Cs_{AB}$ consists of those $[\omega]_{AB}\in\Bs^\Cs_{AB}$ such that $\Cs/\ke(\omega)$ is a C*-category that, restricted to the discrete pair groupoid with objects $A,B$, is full. 
\end{definition}

\begin{proposition}\label{prop: X-delta}  
For all $A\in\Cs^0$, we have $\Xs^\Cs_A=[\Cs;\CC]_{AA}$ and the space $\Xs^\Cs_A$ is compact Hausdorff homeomorphic to the Gel'fand spectrum $\Sp(\Cs_{AA})=[\Cs_{AA};\CC]$ of the commutative unital C*-algebra $\Cs_{AA}$. 
\end{proposition}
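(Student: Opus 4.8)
The plan is to treat the two assertions in turn: the equality $\Xs^\Cs_A=[\Cs;\CC]_{AA}$ will follow quickly from the triviality of the torsor action on diagonal blocks, while the homeomorphism with $\Sp(\Cs_{AA})$ will be obtained by exhibiting the restriction map $\omega\mapsto\omega_{AA}$ as a continuous bijection between compact Hausdorff spaces. For the first equality I would recall from Lemma~\ref{lem: aut} that every $\alpha\in\Aut_\Af(!^\bullet(\CC))$ acts as $\alpha_{AA}=\id_{\CC_{AA}}$ on the $AA$-block, so the local torsor orbits there are singletons and, as already noted in Remark~\ref{rem: X-topology}, $\Bs^\Cs_{AA}$ is canonically identified with $[\Cs;\CC]_{AA}$. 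It then suffices to observe that the defining rank condition of $\Xs^\Cs_{AA}$ holds at every point: for each $\omega\in[\Cs;\CC]$ the restriction $\omega_{AA}$ is a unital $*$-homomorphism $\Cs_{AA}\to\CC$, hence nonzero, so $\ke(\omega_{AA})$ is a maximal ideal and $\dim_\CC(\Cs_{AA}/\ke(\omega_{AA}))=1$. Every element of $\Bs^\Cs_{AA}$ therefore lies in $\Xs^\Cs_{AA}$, giving $\Xs^\Cs_A=[\Cs;\CC]_{AA}$.

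For the homeomorphism I would consider the map $[\Cs;\CC]_{AA}\to\Sp(\Cs_{AA})$ sending each restriction $\omega_{AA}$ to itself, regarded as a character of $\Cs_{AA}$. Well-definedness and injectivity are immediate, since the image is by construction a character and is determined by its values on $\Cs_{AA}$; continuity is equally clear because both sides carry the weak-$*$-topology generated by the evaluations $\ev_x$, $x\in\Cs_{AA}$, which the map intertwines. As $[\Cs;\CC]_{AA}$ is compact (being a continuous image of the compact space $[\Cs;\CC]$ of Lemma~\ref{lem: cH}, as in the restriction lemma preceding Remark~\ref{rem: X-topology}) and $\Sp(\Cs_{AA})$ is Hausdorff, any continuous bijection is automatically a homeomorphism; compact-Hausdorffness of $\Xs^\Cs_A$ follows at the same time.

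The crux, and the step I expect to be the main obstacle, is the surjectivity of this map: one must show that every character $\chi\in\Sp(\Cs_{AA})$ is the diagonal restriction of some global $*$-functor $\omega\in[\Cs;\CC]$. Here I would try to extend $\chi=\ev_{p_A}$ object by object using the Hilbert-bimodule structure of the off-diagonal Hom-sets. For each object $B$, whenever $p_A$ lies in the open set cut out by the ideal ${}_A\ip{\Cs_{AB}}{\Cs_{AB}}$ one transports it along the associated partial homeomorphism to the corresponding point of $\Sp(\Cs_{BB})$ and defines $\omega$ on $\Cs_{AB}$ through the one-dimensional quotient $\Cs_{AB}/\ke(\omega_{AB})$ (using saturation to realize the value), setting $\omega$ to vanish on $\Cs_{AB}$ otherwise; the identity $\omega_{BB}(x^{*}\circ x)=|\omega(x)|^{2}$ then pins down the diagonal values. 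The delicate part is to verify that these object-by-object choices are mutually consistent — this is exactly the role of the ``triviality of holonomy'' recorded in Remark~\ref{rem: p} — and that they assemble into a genuinely $\CC$-linear, multiplicative, involution-preserving functor; controlling this in the presence of non-full bimodules $\Cs_{AB}$ is the technical heart of the proof.
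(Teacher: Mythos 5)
Your first two steps coincide with the paper's own argument: the equality $\Xs^\Cs_A=\Bs^\Cs_A=[\Cs;\CC]_{AA}$ is obtained there exactly as you propose (triviality of the torsor action on diagonal blocks, plus the fact that $\omega_{AA}$ is unital hence has one-dimensional quotient), and the homeomorphism is likewise obtained by viewing the restriction map into $\Sp(\Cs_{AA})$ as a continuous injection between compact Hausdorff spaces. The genuine gap is the surjectivity step, which you correctly single out as the crux but then do not prove: your proposal ends by deferring precisely the two things that constitute the proof --- the mutual consistency of the object-by-object choices and their assembly into a $\CC$-linear, multiplicative, involutive functor --- so what is offered is a plan, not an argument.

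Moreover, the plan as stated would be hard to complete without circularity. The ``partial homeomorphism'' along which you transport $p_A$ presupposes a spectral description of the non-full bimodule $\Cs_{AB}$ (a line bundle over the graph of a homeomorphism between open subsets of $\Sp(\Cs_{AA})$ and $\Sp(\Cs_{BB})$); but the paper explicitly states, in the remark following remark~\ref{rem: -imp}, that no such Hom-set-wise spectral theory is available at this stage, and indeed it is only derived in section~\ref{sec: 9} as a \emph{consequence} of the duality (one could try to import it from the Rieffel correspondence of~\cite{BGR77}, but that is a substantial external ingredient you would have to set up for non-full bimodules). Similarly, your appeal to the ``triviality of holonomies'' of remark~\ref{rem: p} is circular: that property is an axiom of the base groupoid of an abstract spaceoid, and for the spectral groupoid $\Xs^\Cs$ it is \emph{proved} in proposition~\ref{prop: X-groupoid} using the already-existing functors $\omega\in[\Cs;\CC]$ --- the very objects whose existence you are trying to establish. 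Finally, even granting the transport, you only specify $\omega$ on $\Cs_{AA}$, $\Cs_{AB}$ and (implicitly) $\Cs_{BB}$; a global functor needs coherent values on every Hom-set $\Cs_{BC}$, where the one-dimensional quotients fix only $|\omega_{BC}|$ and leave a phase per Hom-set to be chosen compatibly (a cocycle problem), and it also needs characters on the full subcategory of objects in discrete position with respect to $p_A$, which is an existence problem of exactly the same nature as the one being solved. The paper's proof sidesteps all of this: it takes the smallest closed categorical $*$-ideal $\Is_{\omega_o}$ containing $\ker(\omega_o)$, observes that $(\Is_{\omega_o})_{BC}$ is the closed span of $\Cs_{BA}\circ\ker(\omega_o)\circ\Cs_{AC}$ so that $(\Is_{\omega_o})_{AA}=\ker(\omega_o)$, applies Gel'fand--Mazur to get $(\Cs/\Is_{\omega_o})_{AA}\simeq\CC$, and pulls back an arbitrary $\kappa\in[\Cs/\Is_{\omega_o};\CC]$ along the quotient functor to get $\omega_o=\kappa\circ q|_{AA}$. (Even there, the nonemptiness of $[\Cs/\Is_{\omega_o};\CC]$ is asserted rather than argued --- which only underlines that the global-existence issue is exactly where your sketch stalls.)
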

\begin{proof}
By unitality, $\ke(\omega_{AA})\neq\{0\}$ for all $\omega\in[\Cs;\CC]$ hence $\Bs^\Cs_{A}=\Xs^\Cs_{A}$. 

\medskip 

Since $\alpha_{AA}=1$, for all $A$, we have that $[\Cs;\CC]_{AA}\simeq\Bs^\Cs_A$ (the equivalence classes are singletons). 

\medskip 

The restriction map $|_{AA}:\omega\mapsto \omega_{AA}$ is a continuous map $|_{AA}:[\Cs;\CC]\to\Sp(\Cs_{AA})$ between compact Hausdorff spaces in their weak-$*$-topology. 
It will be a homeomorphisms if it is bijective. 

\medskip 

Now $|_{AA}$ is injective, since $\omega_{AA}\neq \rho_{AA}\imp \exists x\in\Cs_{AA} \st \omega(x)\neq\rho(x)$ and so $\omega\neq\rho$. 

\medskip 

For the surjectivity, given $\omega_o\in[\Cs_{AA};\CC]$, consider $\ker(\omega_o)\subset\Cs_{AA}$ and let $\Is_{\omega_o}$ be the smallest closed categorical $*$-ideal containing $\ker(\omega_o)$. 
We explicitly have $(\Is_{\omega_o})_{BC}=\Cs_{BA}\circ\Is_{\omega_o}\circ\Cs_{AC}$, for all $B,C\in\Cs^0$, and hence $(\Is_{\omega_o})_{AA}=\ke(\omega_o)$.  
The quotient $\frac{\Cs}{\Is_{\omega_o}}$ is a C*-category. We choose an arbitrary element $\kappa\in[\frac{\Cs}{\Is_{\omega_o}};\CC]$ and notice that, by Gel'fand-Mazur applied to $(\Cs/\Is_{\omega_o})_{AA}\simeq\CC$, we have $\omega_o=\kappa\circ q\circ |_{AA}$, where $\Cs\xrightarrow{q}\frac{\Cs}{\Is_{\omega_o}}$ is the canonical quotient $*$-functor onto the quotient C*-category. Since $\kappa\circ q\in[\Cs;\CC]$, we obtain the surjectivity. 
\end{proof}

\begin{proposition}\label{prop: X-topology}
For all $AB\in\Rs^\Cs$, the space $\Xs^\Cs_{AB}\subset\Bs^\Cs_{AB}$ is an open locally compact Hausdorff completely regular space with Alexandroff compactification $\Bs^\Cs_{AB}$. 
For all $AB\in\Rs^\Cs$ the source/target maps given by 
\begin{equation}\label{eq: s-t-m}
\Xs^\Cs_A\xleftarrow{t^\Cs_{AB}}\Xs^\Cs_{AB}\xrightarrow{s^\Cs_{AB}}\Xs^\Cs_B
\quad \quad   
\xymatrix{[\omega]_{AA} & \ar@{|->}[l]_{t^\Cs_{AB}} [\omega]_{AB} \ar@{|->}[r]^{s^\Cs_{AB}} &[\omega]_{BB}} 
\end{equation}
are homeomorphisms onto open subsets of $\Xs^\Cs_A$ and $\Xs^\Cs_B$. 
\end{proposition}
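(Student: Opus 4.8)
The plan is to realize $\Xs^\Cs_{AB}$ as the locus of characters that remain non-trivial on the off-diagonal Hom-set, and then to identify it via the source/target maps with the open spectral set of the inner-product ideal of remark~\ref{rem: -imp}. First I would record the reformulation $\Xs^\Cs_{AB}=\{[\omega]_{AB}\mid \omega|_{\Cs_{AB}}\neq 0\}$, which is immediate from lemma~\ref{lem: ke}, since $\dim_\CC(\Cs_{AB}/\ke(\omega|_{AB}))\in\{0,1\}$ equals $1$ exactly when $\omega|_{\Cs_{AB}}\neq 0$. The set of \emph{zero} restrictions in $[\Cs;\CC]_{AB}$ is at most a single point (the zero functional) and is closed and saturated for the local torsor action, so by the continuity of the Hom-set restriction and torsor quotient of remark~\ref{rem: X-topology} its image is the complement of $\Xs^\Cs_{AB}$ in $\Bs^\Cs_{AB}$ and is compact, hence closed; therefore $\Xs^\Cs_{AB}$ is open. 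As an open subspace of the compact Hausdorff space $\Bs^\Cs_{AB}$ it is locally compact Hausdorff, and completely regular (complete regularity being hereditary from the normal space $\Bs^\Cs_{AB}$); since its complement is at most the single class of the zero functional, $\Bs^\Cs_{AB}$ is its Alexandroff compactification (for $A=B$ the complement is empty by proposition~\ref{prop: X-delta}).

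Next I would check that $t^\Cs_{AB}$ and $s^\Cs_{AB}$ are well defined, injective and continuous on $\Xs^\Cs_{AB}$. The multiplicativity identity $\omega(y\circ x)=\omega_{AA}(y)\,\omega_{AB}(x)$, for $y\in\Cs_{AA}$ and $x\in\Cs_{AB}$, shows that a non-trivial $\omega|_{AB}$ pins down $\omega_{AA}$, so the assignment $[\omega]_{AB}\mapsto\omega_{AA}$ descends to the quotient; and if $\omega_{1,AA}=\omega_{2,AA}$, then fixing $x_0$ with $\omega_2(x_0)\neq 0$ and comparing the values of $\omega_j(x\circ x_0^*)$ forces $\omega_1|_{AB}=u\,\omega_2|_{AB}$ with $u:=\omega_1(x_0)/\omega_2(x_0)\in\TT$, i.e.\ $[\omega_1]_{AB}=[\omega_2]_{AB}$; the mirror computation with the right inner product handles $s^\Cs_{AB}$. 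Continuity is inherited from the continuous restriction $\omega\mapsto\omega_{AA}$ through the open quotient map $\{\omega\mid\omega|_{AB}\neq 0\}\to\Xs^\Cs_{AB}$.

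The substantive step is to identify the image of $t^\Cs_{AB}$ with the open set $O^{AB}_A:=\{p_A\in\Sp(\Cs_{AA})\mid p_A|_{{}_A\ip{\Cs_{AB}}{\Cs_{AB}}}\neq 0\}$ attached to the ideal of remark~\ref{rem: -imp}. The inclusion $\subseteq$ is clear, since $\omega(x)\neq 0\Rightarrow\omega_{AA}(x\circ x^*)=|\omega(x)|^2>0$. For $\supseteq$ I would adapt the quotient construction in the proof of proposition~\ref{prop: X-delta}: given $p_A$ non-vanishing on the ideal, form the smallest closed categorical $*$-ideal $\Is\supseteq\ke(p_A)$, whose off-diagonal component is $\overline{\ke(p_A)\circ\Cs_{AB}}$. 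A short approximation argument shows that $\Cs_{AB}=\overline{\ke(p_A)\circ\Cs_{AB}}$ would force $x\circ y^*\in\ke(p_A)$ for all $x,y\in\Cs_{AB}$, contradicting the hypothesis on $p_A$; hence $(\Cs/\Is)_{AB}\neq 0$. Since $(\Cs/\Is)_{AA}\cong\CC$, any character $\kappa$ of $\Cs/\Is$ satisfies $\kappa(\bar x\circ\bar x^*)=|\kappa(\bar x)|^2\neq 0$ for a non-zero $\bar x\in(\Cs/\Is)_{AB}$, so $\kappa|_{AB}\neq 0$, and $\omega:=\kappa\circ q^\Is$ extends $p_A$ with $\omega|_{AB}\neq 0$; thus $p_A$ lies in the image and the image is exactly the open set $O^{AB}_A$.

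Finally, to upgrade the continuous bijection $t^\Cs_{AB}\colon\Xs^\Cs_{AB}\to O^{AB}_A$ to a homeomorphism I would prove that it is proper. If $\omega_{n,AA}\to p_A\in K$ for a compact $K\subset O^{AB}_A$, then picking $x$ with $p_A(x\circ x^*)>0$ yields $|\omega_n(x)|^2=\omega_{n,AA}(x\circ x^*)\to p_A(x\circ x^*)>0$, so any subnet limit $\omega_*$ in the compact space $[\Cs;\CC]$ of lemma~\ref{lem: cH} has $\omega_*|_{AB}\neq 0$ and again lies in $t^{-1}(K)$; this makes $t^{-1}(K)$ compact. A proper continuous bijection onto the locally compact Hausdorff space $O^{AB}_A$ is closed, hence a homeomorphism, and $O^{AB}_A$ is open in $\Xs^\Cs_A=\Sp(\Cs_{AA})$; the analogous argument with the right inner-product ideal $\ip{\Cs_{AB}}{\Cs_{AB}}_B$ presents $s^\Cs_{AB}$ as a homeomorphism onto an open set $O^{AB}_B\subset\Xs^\Cs_B$. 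I expect the main obstacle to be the surjectivity onto $O^{AB}_A$, i.e.\ the extension/quotient argument of the third paragraph, since everything else reduces to soft point-set topology once the image has been pinned down.
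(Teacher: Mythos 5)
Your proof is correct, and its first half runs parallel to the paper's: the openness/local-compactness/Alexandroff part is essentially identical, and your well-definedness and injectivity arguments for $t^\Cs_{AB}$, $s^\Cs_{AB}$ are leaner versions of the paper's (you pin down $\omega_{AA}$ from a non-trivial $\omega|_{AB}$ via the single identity $\omega(y\circ x)=\omega_{AA}(y)\,\omega_{AB}(x)$ and get the phase $u$ by comparing $\omega_j(x\circ x_0^*)$, whereas the paper argues well-definedness through the spectral theory of the closed ideal generated by the inner products $x\circ y^*$, and injectivity through equality of kernels plus one-dimensionality of the quotients). The genuine divergence is in the final step. The paper never identifies the image of $t^\Cs_{AB}$: it proves directly that $t^\Cs_{AB}$ is an \emph{open} map, reading off $t^\Cs_{AB}(\U)=|_{AA}\bigl(|_{AB}^{-1}\bigl((q^\Cs_{E_{AB}})^{-1}(\U)\bigr)\bigr)$ from the commuting diagram of open/closed restriction and quotient maps of remark~\ref{rem: X-topology}, so that ``injective $+$ continuous $+$ open'' finishes the proof in a few lines. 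You instead compute the image to be the Gel'fand open set $O^{AB}_A$ of the ideal ${}_A\ip{\Cs_{AB}}{\Cs_{AB}}$ --- which costs you the extension argument through the smallest closed categorical $*$-ideal $\Is\supseteq\ke(p_A)$ and the verification that $(\Cs/\Is)_{AB}\neq 0$ --- and then upgrade the continuous bijection to a homeomorphism by properness. Both routes are sound; the paper's is shorter and softer, while yours buys exactly the explicit spectral description of the image that the paper only alludes to (in the remark on the sets $O^{AB}_A$ in section~\ref{sec: 2} and in the closing comment of section~\ref{sec: 9}). One caveat: your surjectivity step, like the proof of proposition~\ref{prop: X-delta} that it adapts, silently assumes that the quotient C*-category $\Cs/\Is$ admits at least one character $\kappa\in[\Cs/\Is;\CC]$; since the paper makes the identical unproved assumption at the corresponding point, this is not a gap relative to the paper's own standard of rigor, but it is the one place where your argument (and theirs) rests on an existence statement that is not established in the text.
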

\begin{proof}
For all $AB\in\Rs^\Cs$, since for all $x\in\Cs_{AB}$ the map $\ev_x:[\Cs;\CC]\to\Cs$ is continuous in the weak-$*$-topology, the set $\bigcap_{x\in\Cs_{AB}}\ev_x^{-1}(0_\CC)$ is closed in $[\Cs;\CC]$ and, by remark~\ref{rem: X-topology} its image inside $\Bs^\Cs_{AB}$ is closed, consisting of only one point (corresponding to the unique zero $\CC$-linear map on $\Cs_{AB}$). By definition, $\Xs^\Cs_{AB}$ coincides with the complement of the previous closed image and hence it is open in $\Bs^\Cs_{AB}$. 
Being an open set in $\Bs^\Cs_{AB}$ that is compact Hausdorff and hence T4, the topological space $\Xs^\Cs$ is always locally compact Hausdorff and completely regular. 
Whenever $\varnothing\neq\bigcap_{x\in\Cs_{AB}}\ev_x^{-1}(0_\CC)$, the compact Hausdorff space $\Bs^\Cs_{AB}=\Xs^\Cs_{AB}\cup\{\bullet_{AB}\}$ is the Alexandroff compactification of $\Xs^\Cs_{AB}$ with ``point at infinity'' $\bullet_{AB}\in\Bs^\Cs_{AB}-\Xs^\Cs_{AB}$ that is the equivalence class $\bullet_{AB}:=[\omega]_{AB}$ of all the $\omega\in[\Cs;\CC]$ that restrict to the zero linear functional on $\Cs_{AB}$. 

\medskip 

We first justify that source and target maps in~\eqref{eq: s-t-m} are well-defined. 
Whenever $[\omega]_{AB}\in\Xs^\Cs_{AB}$, the C*-category $\Cs/\ke(\omega)$ restricted to the two objects $A,B$ is full, hence $\dim_\CC(\Cs_{AA}/\ke(\omega_{AA}))=1=\dim_\CC(\Cs_{BB}/\ke(\omega_{BB}))$ and so we obtain $[\omega]_{AA}\in\Xs^\Cs_{AA}$ and $[\omega]_{BB}\in\Xs^\Cs_{BB}$. 
In order to show that sources/target maps are well-defined functions, given $[\omega]_{AB}=[\rho]_{AB}\in\Xs^\Cs_{AB}$, we must prove that $[\omega]_{AA}=[\rho]_{AA}$ and similarly $[\omega]_{BB}=[\rho]_{BB}$. 
From the $*$-functorial properties of $\omega,\rho$, we have $\omega(x\circ y^*)=\rho(x\circ y^*)$ and $\omega(x^*\circ y)=\rho(x^*\circ y)$, for all $x,y\in\Cs_{AB}$, and hence $\omega_{AA}$ concides with $\rho_{AA}$ on the closed $*$-ideal $\Is^\Cs_{AA}$ generated by all the ``inner products'' $\{x\circ y^* \ | \ x,y\in\Cs_{AB}\}$; similarly $\omega_{BB}$ coincides with $\rho_{BB}$ on the closed $*$-ideal $\Is^\Cs_{BB}$ generated by $\{x^*\circ y\ | \ x,y\in\Cs_{AB}\}$. 
Using the spectral theory for closed ideals of a unital commutative C*-algebra, we know that $\Is^\Cs_{AA}$, via Gel'fand transform $\Gg_{\Cs_{AA}}:\Cs_{AA}\to C(\Sp(\Cs_{AA}))$, is isomorphic to the closed ideal in $C(\Sp(\Cs_{AA}))$ consisting of continuous functions vanishing on a closed subset of $\Sp(\Cs_{AA})$. Since $\omega_{AA}$ is a unital $*$-homomorphism on $\Cs_{AA}$, it is a pure state and there exists a unique point $p_\omega\in\Sp(\Cs_{AA})$ such that $\omega_{AA}=\ev_{p_\omega}\circ\Gg_{\Cs_{AA}}$, 
(where $\Gg_{\Cs_{AA}}$ denotes the Gel'fand transform of the C*-algebra $\Cs_\AA$) 
similarly there exists a unique point $p_\rho\in\Sp(\Cs_{AA})$ such that $\rho_{AA}=\ev_{p_\rho}\circ\Gg_{\Cs_{AA}}$. Since we have $\omega_{AA}|_{\Is^\Cs_{AA}}=\rho_{BB}|_{\Is^\Cs_{AA}}$, and such restriction is non-trivial, we necessarily have $p_\omega=p_\rho$ and $\omega_{AA}=\rho_{AA}$; in perfectly similar way we get $\omega_{BB}=\rho_{BB}$. Recalling, from remark~\ref{rem: X-topology}, that $[\omega]_{CC}=\{\omega_{CC}\}$ and $[\rho]_{CC}=\{\rho_{CC}\}$, for all $C\in\Os^\Cs$, we completed the task. 

\medskip 

For the injectivity of $t^\Cs_{AB}$, suppose that $\omega,\rho\in[\Cs;\CC]$ satisfy $[\omega]_{AA}=[\rho]_{AA}$ and hence $\omega_{AA}=\rho_{AA}$, for all $x\in\Cs_{AB}$, we obtain $|\omega_{AB}(x)|^2=\omega_{AA}(x^*\circ x)=\rho_{AA}(x^*\circ x)=|\rho_{AB}(x)|^2$, that implies $|\omega_{AB}(x)|=|\rho_{AB}(x)|$, for all $x\in\Cs_{AB}$; this further implies that $\ke(\omega_{AB})=\ke(\rho_{AB})$ and hence, since the quotients by the kernels are 1-dimensional, there exists $u\in\CC$ such that $\rho_{AB}=u\cdot\omega_{AB}$; finally, from the equality $|\rho_{AB}|=|u|\cdot|\omega_{AB}|$, we obtain $u\in\TT$ and, using remark~\ref{rem: X-topology}, we get $[\omega]_{AB}=[\rho]_{AB}$. The injectivity of $s^\Cs_{AB}$ is obtained in a perfectly similar way. 

\medskip 

For continuity and openness of the target map $t^\Cs_{AB}$ (the case of the source map $s^\Cs_{AB}$ is treated in a perfectly similar way), consider the following commuting diagram:
\begin{equation*}
\xymatrix{
& & [\Cs;\CC]-\{\omega \ | \ \omega|_{AB}=0\} \ar[rrd]^{|_{AB}} \ar[lld]_{ \quad \quad |_{AA}}
\\
[\Cs;\CC]_{AA}=\Bs^\Cs_A=\Xs^\Cs_A & & \ar[ll]^{\quad \quad \quad t^\Cs_{AB}} \Xs^\Cs_{AB} 
& & \ar[ll]^{q^\Cs_{E_{AB}}} [\Cs;\CC]_{AB}-q^\Cs_{E_{AB}}(\bullet_{AB}). 
}
\end{equation*}
The restriction maps $[\Cs;\CC]\xrightarrow{|_{AB}}[\Cs;\CC]_{AB}$, $[\Cs;\CC]\xrightarrow{|_{AA}}[\Cs;\CC]_{AA}$ and the quotient map $[\Cs;\CC]_{AB}\xrightarrow{q^\Cs_{E_{AB}}}\Bs^\Cs_{AB}$
are onto continuous open closed between compact Hausdorff spaces. 
The target map $t^\Cs_{AB}$ is only defined on the open subspace $\Xs^\Cs_{AB}\subset\Bs^\Cs_{AB}$ and hence we need to restrict to open subspaces in the commuting diagram above. 
Using the quotient map $|_{AB}$, we see that $t^\Cs_{AB}\circ q^\Cs_{E_{AB}}$ is continuous because, 
$\ev_x\circ \left(t^\Cs_{AB}\circ q^\Cs_{E_{AB}}\right) |_{AB}=\ev_x$, for all $x\in\Cs_{AA}$, and hence using the quotient map $q^\Cs_{E_{AB}}$ we obtain the continuity of $t^\Cs_{AB}$.   
The target map $t^\Cs_{AB}$ is also open because, using the commuting diagram above (and the surjectivity continuity and openness of the quotient maps), for every open set $\U\subset\Xs^\Cs_{AB}$, we have $t^\Cs_{AB}(\U)=|_{AA}(|_{AB}^{-1}(q^\Cs_{E_{AB}})^{-1}(\U)))$. 
\end{proof}

\begin{proposition}\label{prop: X-groupoid}
The bundle $\Xs^\Cs\xrightarrow{\theta^\Cs}\Rs^\Cs$ with fibers $\Xs_{AB}$, for $AB\in\Rs^\Cs$, is a topological groupoid fibered over the discrete groupoid $\Rs^\Cs$ with well-defined operations given, for all $A,B,C\in\Cs^0$, $\omega\in [\Cs;\CC]$, by:
\begin{gather}\label{eq: operations}
[\omega]_{AB}\circ[\omega]_{BC}:=[\omega]_{AC},   
\quad 
([\omega]_{AB})^*:=[\omega]_{BA},   
\quad   
\iota^{\Xs^\Cs}([\omega]_{AA})=[\omega]_{AA}.
\end{gather}

Furthermore $\Xs^\Cs_{AB}\circ \Xs^\Cs_{BC}\subset\Xs^\Cs_{AC}$, $\Xs^\Cs_{BA}=(\Xs^\Cs_{AB})^*$ and $\Xs^\Cs_{AA}=\Delta_{\Xs^\Cs_A}$ is a trivial groupoid of identities. 
\end{proposition}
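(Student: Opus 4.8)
The plan is to exploit the fact, established in Proposition~\ref{prop: X-topology}, that for every $AB\in\Rs^\Cs$ the target map $t^\Cs_{AB}$ (and likewise $s^\Cs_{AB}$) is a homeomorphism of $\Xs^\Cs_{AB}$ onto an open subset of $\Xs^\Cs_A$; in particular it is injective, so a rank-one point $[\omega]_{AB}$ is completely determined by its target $[\omega]_{AA}\in\Xs^\Cs_A$. This ``triviality of holonomy'' (remark~\ref{rem: p}) is what makes all three operations rigid, and I would use it both to define composition and to verify the groupoid axioms.

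First I would dispose of involution and identity. For the involution, if $[\omega]_{AB}=[\rho]_{AB}$, i.e. $\rho_{AB}=u\cdot\omega_{AB}$ with $u\in\TT$, then for $x\in\Cs_{AB}$ one has $\rho_{BA}(x^*)=\overline{\rho_{AB}(x)}=\overline u\cdot\omega_{BA}(x^*)$, so $[\omega]_{BA}=[\rho]_{BA}$ and $([\cdot])^*$ is well defined; since $x\mapsto x^*$ is a conjugate-linear isomorphism $\Cs_{AB}\to\Cs_{BA}$ it preserves the codimension of the kernel, giving $\Xs^\Cs_{BA}=(\Xs^\Cs_{AB})^*$. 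The identity is automatic because, by remark~\ref{rem: X-topology}, each diagonal class $[\omega]_{AA}=\{\omega_{AA}\}$ is a singleton; together with Proposition~\ref{prop: X-delta} this yields $\Xs^\Cs_{AA}=\Delta_{\Xs^\Cs_A}$.

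The core of the argument is composition, and the main obstacle is to show that two composable points admit a common global representative whose product lands again in a rank-one Hom-set. Given composable $[\eta]_{AB}\in\Xs^\Cs_{AB}$ and $[\zeta]_{BC}\in\Xs^\Cs_{BC}$, i.e. $s^\Cs_{AB}([\eta]_{AB})=t^\Cs_{BC}([\zeta]_{BC})$, which means $\eta_{BB}=\zeta_{BB}=:p_B$, I would first record the inner-product identity $|\omega_{BC}(y)|^2=\omega_{BB}(y\circ y^*)$ valid for every $\omega\in[\Cs;\CC]$ and $y\in\Cs_{BC}$. Since $[\zeta]_{BC}$ is rank-one, $\zeta_{BC}\neq 0$, so $p_B$ does not annihilate the ideal generated by $\{y\circ y^*\mid y\in\Cs_{BC}\}$; as $\eta_{BB}=p_B$ the same identity forces $\eta_{BC}\neq 0$, whence $[\eta]_{BC}\in\Xs^\Cs_{BC}$ and, by injectivity of $t^\Cs_{BC}$ at the common target $p_B$, $[\eta]_{BC}=[\zeta]_{BC}$. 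Thus $\eta$ itself is a common representative, and I would set $[\eta]_{AB}\circ[\zeta]_{BC}:=[\eta]_{AC}$. Choosing $x\in\Cs_{AB}$, $y\in\Cs_{BC}$ with $\eta(x)\neq 0\neq\eta(y)$ gives $\eta(x\circ y)=\eta(x)\cdot\eta(y)\neq 0$ with $x\circ y\in\Cs_{AC}$, so $\eta_{AC}\neq 0$ and $[\eta]_{AC}\in\Xs^\Cs_{AC}$, proving $\Xs^\Cs_{AB}\circ\Xs^\Cs_{BC}\subset\Xs^\Cs_{AC}$. Independence of the representative is then immediate from injectivity of $t^\Cs_{AC}$: any common representative $\omega$ satisfies $[\omega]_{AA}=t^\Cs_{AB}([\eta]_{AB})=p_A$, and $[\omega]_{AC}$ is the unique point of $\Xs^\Cs_{AC}$ with target $p_A$.

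Finally I would check the groupoid axioms and continuity. Associativity and the unit laws hold because, for a single representative $\omega$, all products reduce to the tautologies $[\omega]_{AB}\circ[\omega]_{BC}=[\omega]_{AC}$ and $[\omega]_{AA}\circ[\omega]_{AB}=[\omega]_{AB}=[\omega]_{AB}\circ[\omega]_{BB}$; every morphism is invertible since $[\omega]_{AB}\circ[\omega]_{BA}=[\omega]_{AA}$ and $[\omega]_{BA}\circ[\omega]_{AB}=[\omega]_{BB}$, using $\Xs^\Cs_{AA}=\Delta_{\Xs^\Cs_A}$. For continuity I would express every structure map through the homeomorphisms of Proposition~\ref{prop: X-topology}: on composable pairs the product equals $(t^\Cs_{AC})^{-1}\circ t^\Cs_{AB}$ applied to the first factor (composability forcing the second factor to be the unique $BC$-point over $p_B$), while the involution equals $(t^\Cs_{BA})^{-1}\circ s^\Cs_{AB}$, both continuous as composites of continuous maps with inverses of homeomorphisms onto open subsets. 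This exhibits $\Xs^\Cs\xrightarrow{\theta^\Cs}\Rs^\Cs$ as a topological groupoid fibered over the discrete pair groupoid $\Rs^\Cs$.
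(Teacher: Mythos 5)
Your proof is correct, and while the identity and involution parts essentially coincide with the paper's, your treatment of the two delicate points --- well-definedness of the composition and continuity of the structure maps --- takes a genuinely different route. For composition the paper argues ideal-theoretically: given composable $[\omega]_{AB}$ and $[\rho]_{BC}$, it shows that the categorical $*$-ideals $\ke(\omega)$ and $\ke(\rho)$ agree on the pair subgroupoid $\Gs$ generated by $A,B,C$, deduces that the rank-one quotients $\tfrac{\Cs}{\ke(\omega)}|_\Gs\simeq\tfrac{\Cs}{\ke(\rho)}|_\Gs$ are isomorphic, and concludes that $\omega|_\Gs$ and $\rho|_\Gs$ differ by a functorial $\TT$-valued section, whence $[\omega]_{XY}=[\rho]_{XY}$ for all $XY\in\Gs$; existence of a common representative and independence of its choice come out simultaneously. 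You instead exploit Proposition~\ref{prop: X-topology} twice: the C*-identity $|\omega_{BC}(y)|^2=\omega_{BB}(y\circ y^*)$ shows that composability ($\eta_{BB}=\zeta_{BB}$) forces $\eta_{BC}\neq 0$, injectivity of $t^\Cs_{BC}$ then gives $[\eta]_{BC}=[\zeta]_{BC}$ (so the first representative already represents the second class), and uniqueness of the composite follows from injectivity of $t^\Cs_{AC}$; here you also use, correctly, that membership in $\Xs^\Cs_{AB}$ is equivalent to $\omega_{AB}\neq 0$, as noted in the proof of Lemma~\ref{lem: ke}. Your route is more economical and bypasses the paper's rather terse claims about generated ideals, whereas the paper's route explicitly produces the unitary section relating the two functionals on all of $\Gs$, a torsor-theoretic picture that reappears later (for instance in the proof of Theorem~\ref{th: Eg}). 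For continuity, the paper lifts the operations to $[\Cs;\CC]\times\Rs^\Cs$ and invokes quotient-topology arguments, while you factor the composition as $(t^\Cs_{AC})^{-1}\circ t^\Cs_{AB}$ applied to the first factor and the involution as $(t^\Cs_{BA})^{-1}\circ s^\Cs_{AB}$, composites of continuous maps with inverses of homeomorphisms onto open subsets; this is sharper, and it makes the ``triviality of holonomy'' of Remark~\ref{rem: p} do visible work. The only items you leave implicit --- that $\theta^\Cs$ is a continuous groupoid morphism and that the fibers $\Xs^\Cs_{AB}$ are open in $\Xs^\Cs$ --- are immediate from the disjoint-union structure, so no genuine gap results.
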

\begin{proof} 
Once the formulas in~\eqref{eq: operations} are fully justified, the groupoid properties of $\Xs^\Cs$ are trivial: 
associativity follows from $[\omega]_{AB}\circ([\omega]_{BC}\circ[\omega]_{CD})=([\omega]_{AB}\circ[\omega]_{BC})\circ[\omega]_{CD}$; 
unitality follows from $[\omega]_{AB}\circ[\omega]_{BB}=[\omega]_{AB}$ and $[\omega]_{AA}\circ[\omega]_{AB}=[\omega]_{AB}$; 
for all $[\omega]_{AB}$, the inverse is given by $([\omega]_{AB})^{-1}=[\omega]_{BA}=([\omega]_{AB})^*$.
Similarly $\theta^\Cs$ will be a morphism of groupoids: $AC=\theta^\Cs([\omega]_{AC})=\theta^\Cs([\omega]_{AB}\circ[\omega_{BC}])=\theta^\Cs([\omega]_{AB})\cdot\theta^\Cs([\omega]_{BC}) =AB\cdot BC$, $\theta^\Cs(([\omega]_{AB})^*)=\theta^\Cs([\omega]_{BA})=BA=(AB)^*$, $\theta^\Cs(\iota^{\Xs^\Xs}([\omega]_{AA}))=\theta^\Cs([\omega]_{AA})=AA=\iota^{\Rs^\Cs}(A)$. 

\medskip 

We need to clarify how the operations of identity, involution and composition in~\eqref{eq: operations} are well-defined: 
\begin{itemize}
\item 
whenever $A=B$, from the fact that $[\omega]_{AB}=\{\omega_{AB}\}$, for all $\omega\in[\Cs;\CC]$, we see that source and target maps $s^\Cs_{AA}=t^\Cs_{AA}$ are identities and hence $\Xs^\Cs_{A}$ is a trivial groupoid (consisting only of identities), for all $A\in\Os^\Cs$; 
\item 
if $[\omega]_{AB}\in\Xs^\Cs_{AB}$, for any $\omega'\in[\Cs;\CC]$ such that $[\omega]_{AB}=[\omega']_{AB}$, the C*-category $\left(\Cs/\ke(\omega')\right)$ restricted to the two objects $A,B$ is full and hence $\dim_\CC(\Cs_{AB}/\ke(\omega'_{AB}))=1=\dim_\CC(\Cs_{BA}/\ke(\omega'_{BA}))$ and hence $[\omega']_{BA}\in\Xs^\Cs_{BA}$, furthermore $[\omega']_{BA}=[\omega]_{BA}$: since $\omega'(x^*)=u\cdot\omega(x^*)$, with $u\in\TT$, for all $x^*\in\Cs_{AB}$, we obtain $\omega'(x)=\cj{u}\cdot\omega(x)$, for all $x\in\Cs_{BA}$, with $\cj{u}\in\TT$; this shows that the involution map is well-defined and $(\Xs^\Cs_{AB})^*=\Xs^\Cs_{BA}$;  
\item
if $[\omega]_{AB}\in\Xs^\Cs_{AB}$ and $[\rho]_{BC}\in\Xs^\Cs_{BC}$ are composable, we have $[\omega]_{BB}=s_{AB}^\Cs([\omega]_{AB})=t_{AB}^\Cs([\rho]_{BC})=[\rho]_{BB}$ and hence $\omega_{BB}=\rho_{BB}$. The categorical $*$-ideal $\Is$ generated by $\ker(\omega_{BB})=\ker(\rho_{BB})$ coincides with the categorical $*$-ideals generated respectively by $\ker(\omega_{AB})$ and by $\ke(\rho_{BC})$ and, since both categorical \hbox{$*$-ideals} $\ker(\omega)$ and $\ker(\rho)$ coincide with $\Is$ at least on the pair sub-groupoid $\Gs$ of $\Rs^\Cs$ generated by the objects $A,B,C\in\Os^\Cs$, 
the quotient C*-categories $\frac{\Cs}{\ke(\omega)}|_{\Gs}\simeq\frac{\Cs}{ke(\rho)}|_{\Gs}$ are isomorphic. 
As a consequence, $\omega|_{\Gs}$ and $\rho|_{\Gs}$ are related by a unique unitary groupoid 
$\Gs\xrightarrow{u}!^\bullet(\TT)$ such that $\omega_{XY}=u_{XY}\cdot\rho_{XY}$, for all $XY\in\Gs$, so  $[\omega]_{XY}=[\rho]_{XY}$ and we necessarily have $[\omega]_{AC}=[\rho]_{AC}$, hence the composition is well-defined. 

Finally, given $[\omega_1]_{AB}\in\Xs_{AB}^\Cs$ and $[\omega_2]_{BC}\in\Xs^\Cs_{BC}$ that are composable, from the previous argument, there exists at least one $\omega\in[\Cs;\CC]$ such that $[\omega_1]_{AB}\circ[\omega_2]_{BC}=[\omega]_{AC}\in\Xs^\Cs_{AC}$ with $[\omega]_{AB}=[\omega_1]_{AB}$ and $[\omega]_{BC}=[\omega_2]_{BC}$, hence $\Xs^\Cs_{AB}\circ\Xs^\Cs_{BC}\subset\Xs^\Cs_{AC}$.
\end{itemize}

Since $\Xs^\Cs_{AB}$ is open  in $\Xs^\Cs$, for all $AB\in\Rs^\Cs$, the projection onto the discrete pair groupoid $\Rs^\Cs$ is continuous. 

\medskip 

All the operations of identity, involution and composition are continuous in the discrete groupoid $\Rs^\Cs$ and (since they only act on the indexes in $\Rs^\Cs$) they can be similarly defined and are continuous on the topological product $[\Cs;\CC]\times\Rs^\Cs$. Considering the quotient diagram in remark~\ref{rem: X-topology} (and the properties of quotient topologies) we obtain the continuity of the operations on $\Xs^\Cs\subset \bigcup_{AB\in\Rs^\Cs}\Bs^\Cs_{AB}$.  
\end{proof}

\begin{definition}\label{def: E}
The \emph{base spectral spaceoid} $\Xs^\Cs_{\theta^\Cs,\Os^\Cs}$ of a commutative non-full C*-category $\Cs$ is the bundle: 
\begin{equation*}
\Xs^\Cs\xrightarrow{\theta^\Cs} \Rs^\Cs, \quad \theta^\Cs:[\omega]_{AB}\mapsto AB, 
\quad \Xs^\Cs:=\biguplus_{AB\in\Rs^\Cs}\Xs^\Cs_{AB}, \quad \text{with $AB$-fibers} \quad \Xs^\Cs_{AB}, 
\end{equation*}
where $\theta^\Cs$ is a functor from the topological groupoid $\Xs^\Cs$ to the discrete pair groupoid $\Rs^\Cs=\Os^\Cs\times\Os^\Cs$. 

\medskip 

The \emph{bundle spectral spaceoid} of a commutative non-full C*-category $\Cs$ is the bundle: 
\begin{gather*}
\Es^\Cs\xrightarrow{\pi^\Cs}\Xs^\Cs, \quad  \text{with total space} \quad
\Es^\Cs:=\biguplus_{[\omega]_{AB}\in\Xs^\Cs} \Es^\Cs_{[\omega]_{AB}}, \quad \text{and $[\omega]_{AB}$-fibers} \quad 
\Es^\Cs_{[\omega]_{AB}}:=\left(\frac{\Cs}{\ke(\omega)}\right)_{AB},  
\end{gather*}
equipped with the tubular topology induced by the family of \emph{Gel'fand transforms} local sections of $\Es^\Cs$: 
\begin{equation*}
\forall AB\in\Rs^\Cs, \quad\forall x\in \Cs_{AB} \st 
\Xs^\Cs_{AB}\xrightarrow{\hat{x}}\Es_{AB}^\Cs, \quad \hat{x}([\omega]_{AB}):= x+\ke(\omega)_{AB}. 
\end{equation*}
\end{definition}

\begin{remark}\label{rem: Var}
The previous definition requires several explanations: 
\begin{itemize}
\item 
The Fell line-bundle $\Es^\Cs\xrightarrow{\pi^\Cs}\Xs^\Cs$, topologically, is a disjoint union of line-bundles $\left(\Es^\Cs_{AB}\xrightarrow{\pi^\Cs_{AB}}\Xs^\Cs_{AB}\right)_{AB\in\Rs^\Cs}$.  
\item 
For all $x\in\Cs_{AB}$ the Gel'fand transform $\hat{x}$ is a section of the bundle $\Es^\Cs_{AB}\xrightarrow{\pi^\Cs_{AB}}\Xs_{AB}^\Cs$ since, for all $AB\in\Rs^\Cs$, for all 
$[\omega]_{AB}\in\Xs_{AB}^\Cs \st \pi^\Cs(\hat{x}([\omega]_{AB}))=[\omega]_{AB}$. 
\item 
The \emph{tubular topology}, induced by the Gel'fand transforms, is defined as the topology of the total space $\Es^\Cs$ uniquely determined by the bases of neighborhoods $\U_{AB}(e)$, for $e\in\Es_{AB}^\Cs$, $AB\in\Rs^\Cs$ where:
\begin{gather*}
\U_{AB}(e):=\Big\{U^{x,\epsilon}_\O \ | \ x\in \Cs_{AB}, \ \epsilon>0, \ \O\ \text{open in} \ \Xs_{AB}^\Cs, \ \pi^\Cs(e)\in\O \Big\}, 
\\ 
U^{x,\epsilon}_\O:=\Big\{e\in\Es^\Cs \ | \ \ \|e-\hat{x}(\pi^\Cs(e))\|<\epsilon\Big\}\cap(\pi^\Cs)^{-1}(\O).
\end{gather*}
\end{itemize}
Suitable conditions for the existence of such tubular topology, that in our case is determined by the family of Gel'fand transforms and by the Dauns-Hofmann uniformity provided by the norm, have been carefully considered in~\cite{Va84} and~\cite[theorem~3]{Va95}. Such conditions are valid in the present situation, since our bundles $\Es^\Cs_{AB}\xrightarrow{\pi^\Cs_{AB}}\Xs^\Cs_{AB}$ are all saturated and the topologies on each base space $\Xs^\Cs_{AB}$ (that are quotient topologies of a weak-$*$-topologies) guarantee that $\Big\{[\omega]_{AB} \ | \ \|(\hat{x}-\hat{y})([\omega]_{AB})\|_{\Es^\Cs}<\epsilon\Big\}$ is always an open set, for all $\epsilon>0$ and all $x,y\in\Cs_{AB}$. 

\medskip

Since $\Es^\Cs_{AB}$, for $AB\in\Rs^\Cs$, with the tubular topology induced by the Dauns-Hofmann fibre-uniformity determined by the norm and the family of Gel'fand transforms are $\Gamma_o$-saturated Banach bundles (actually locally trivial line-bundles), by lemma~\ref{lem: lcpr} they are also $\Gamma$-saturated Banach bundles with their usual tubular topology induced by continuous sections. 
As a consequence $\Es^\Cs$ is a Fell line-bundle and $\Sigma(\Cs):=(\Es^\Cs,\pi^\Cs,\Xs^\Cs_{\theta^\Cs,\Os^\Cs})$ is a spectral non-full spaceoid as specified in definition~\ref{def: spaceoid}. 
\xqed{\lrcorner}
\end{remark}

\begin{definition}
The \emph{spectrum functor} $\Tf\xleftarrow{\Sigma}\Af$ is defined as follows: 
\begin{itemize}
\item 
to every object $\Cs\in\Af^0$, $\Sigma$ associates the \emph{total spectral spaceoid}  $\Sigma(\Cs):=(\Es^\Cs,\pi^\Cs,\Xs^\Cs_{\theta^\Cs,\Os^\Cs})\in\Tf^0$ (according to definition~\ref{def: spaceoid}),
\item
to every morphism $\Cs^1\xrightarrow{\phi}\Cs^2$ in $\Af^1$, $\Sigma$ associates the morphism $\Sigma(\Cs^1)\xleftarrow{\Sigma_\phi:=(\lambda^\phi,\Lambda^\phi)}\Sigma(\Cs^2)$ in $\Tf^1$, where: 
\begin{gather}\notag
\lambda^\phi_\Rs: \Rs^{\Cs^2} \to \Rs^{\Cs^1}, \quad \lambda^\phi_\Rs: AB\mapsto (\phi^0)^{-1}(A)(\phi^0)^{-1}(B), 
\\ \label{eq: lambda} 
\lambda^\phi_\Xs:\Xs^{\Cs^2}\to\Xs^{\Cs^1}, \quad \lambda_\Xs^\phi:[\omega]_{AB}\mapsto[\omega\circ\phi]_{\lambda^\phi_\Rs(AB)}, 
\\ \notag 
\Lambda^\phi:(\lambda^\phi)^\bullet(\Es^{\Cs^1}) \to \Es^{\Cs^2}, 
\quad 
\Lambda^\phi:=\biguplus_{[\omega]_{AB}\in\Xs^{\Cs^2}} \Lambda^\phi_{[\omega]_{AB}}, 
\quad 
\Lambda^\phi_{[\omega]_{AB}}: \left(\Es^{\Cs^1}_{\lambda^\phi\left([\omega]_{AB}\right)}\right)_{AB}\to\left(\Es^{\Cs^2}_{[\omega]_{AB}}\right)_{AB},
\\ \notag
\Lambda^\phi_{[\omega]_{AB}}: \left(x+\ke(\omega\circ\phi)_{\lambda^\phi_\Rs(AB)}\right)_{AB} \mapsto \Big(\phi(x)+\ke(\omega)_{AB}\Big)_{AB}, \quad x\in\Cs^1_{\lambda^\phi_\Rs(AB)}, \quad  \omega\in[\Cs^2;\CC], 
\end{gather}
where, in order to explicitly see the effect of canonical isomorphisms for pull-backs, we denote by $\left(\Es^{\Cs^1}_{\lambda^\phi([\omega]_{AB})}\right)_{AB} :=\left((\lambda^\phi)^\bullet(\Es^{\Cs^1})\right)_{AB}$ the $AB$-fiber of the $\lambda^\phi$-pull-back bundle of $\Es^{\Cs^1}$ and use a similar notation for its elements.
\end{itemize}
\end{definition} 

The following is just our version of~\cite[proposition~5.8]{BCL11}. 
\begin{proposition}\label{prop: spectral}
The previous definitions provide a well-defined contravariant spectrum functor $\Tf\xleftarrow{\Sigma}\Af$. 
\end{proposition}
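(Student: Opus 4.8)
The plan is to verify well-definedness on objects first, then on morphisms, and finally functoriality, treating the two new ``convergence at infinity'' clauses as the genuinely non-trivial content (everything else being either inherited from the full case or pure bookkeeping). For objects almost everything has already been assembled: Proposition~\ref{prop: X-groupoid} shows that the base $\Xs^\Cs\xrightarrow{\theta^\Cs}\Rs^\Cs$ is a topological groupoid fibered over the discrete pair groupoid $\Rs^\Cs$, Proposition~\ref{prop: X-delta} that its diagonal fibers are compact Hausdorff, Proposition~\ref{prop: X-topology} that the source/target maps are homeomorphisms onto open subsets, and Remark~\ref{rem: Var} that $\Es^\Cs$ is a saturated unital Fell line-bundle for the tubular topology. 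So I would simply record that $\Sigma(\Cs)=(\Es^\Cs,\pi^\Cs,\Xs^\Cs_{\theta^\Cs,\Os^\Cs})$ meets every clause of Definition~\ref{def: spaceoid}.

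For morphisms I would check the three clauses of Definition~\ref{def: m-spaceoid} for $\Sigma_\phi=(\lambda^\phi,\Lambda^\phi)$, keeping in mind that contravariance makes this a morphism \emph{from} $\Sigma(\Cs^2)$ \emph{to} $\Sigma(\Cs^1)$. That $\lambda^\phi_\Rs$ is an isomorphism of discrete pair groupoids is immediate from the bijectivity of $\phi^0$. The crucial point is that $\lambda^\phi_\Xs$ is well-defined \emph{into} $\Xs^{\Cs^1}$ rather than merely into $\Bs^{\Cs^1}$: for $[\omega]_{AB}\in\Xs^{\Cs^2}_{AB}$ one has $\omega_{AB}\neq 0$, so the non-degeneracy hypothesis~\eqref{eq: non-trivial} forces $(\omega\circ\phi)_{\lambda^\phi_\Rs(AB)}\neq 0$, whence $\dim_\CC\big(\Cs^1_{\lambda^\phi_\Rs(AB)}/\ke(\omega\circ\phi)\big)=1$ by Lemma~\ref{lem: ke} and the image lands in $\Xs^{\Cs^1}$; this is exactly where the non-degeneracy condition is indispensable. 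Invariance under the torsor action follows since $\omega'_{AB}=u\cdot\omega_{AB}$ with $u\in\TT$ gives $(\omega'\circ\phi)=u\cdot(\omega\circ\phi)$. That $\lambda^\phi_\Xs$ is a continuous $*$-functor compatible with the two $\theta$'s follows from~\eqref{eq: operations}, the functoriality of $\lambda^\phi_\Rs$, the descent of the weak-$*$-continuous restriction $\omega\mapsto\omega\circ\phi$ through the quotient diagram of Remark~\ref{rem: X-topology}, and the identity $\ev_y\circ(\omega\mapsto\omega\circ\phi)=\ev_{\phi(y)}$.

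The heart of the argument, and the main obstacle, is the pair of convergence-at-infinity conditions, precisely the features absent from the full theory. For $\Lambda^\phi$ I would first observe that the fibrewise assignment $x+\ke(\omega\circ\phi)\mapsto\phi(x)+\ke(\omega)$ is a fibre-preserving \emph{isometry}: for $x\in\Cs^1_{\lambda^\phi_\Rs(AB)}$ the C*-identity in the rank-one quotients reduces both relevant fiber norms to the common diagonal value $|\omega(\phi(x)^*\circ\phi(x))|^{1/2}=|(\omega\circ\phi)(x^*\circ x)|^{1/2}$. Being a fibre-preserving fibrewise isometry that sends the pulled-back Gel'fand transform $(\lambda^\phi)^\bullet(\hat x)$ to $\widehat{\phi(x)}$, the map $\Lambda^\phi$ is automatically continuous for the tubular topologies and trivially vanishing at infinity (one may take $\delta=\epsilon$ and $\Ls=\Ks$). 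For $\lambda^\phi_\Xs$ I would note that $\omega_{AB}=0$ implies $(\omega\circ\phi)_{\lambda^\phi_\Rs(AB)}=0$ because $\phi\big(\Cs^1_{\lambda^\phi_\Rs(AB)}\big)\subseteq\Cs^2_{AB}$, so the weak-$*$-continuous restriction extends to a continuous map $\hat\lambda\colon\Bs^{\Cs^2}_{AB}\to\Bs^{\Cs^1}_{\lambda^\phi_\Rs(AB)}$ of Alexandroff compactifications carrying the point at infinity to the point at infinity; combined with the contrapositive of~\eqref{eq: non-trivial}, which forces $\hat\lambda^{-1}(\text{point at infinity})$ to be exactly the point at infinity, compactness makes $\lambda^\phi_\Xs$ proper and hence converging at infinity.

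Finally, functoriality I would verify by direct computation against the composition and identity laws of Definition~\ref{def: Tf}, establishing $\Sigma_{\phi\circ\psi}=\Sigma_\psi\circ\Sigma_\phi$ and $\Sigma_{\iota_\Cs}=\iota_{\Sigma(\Cs)}$. This is bookkeeping with the canonical pull-back isomorphisms $\Theta$, entirely parallel to the associativity computation already carried out for $\Tf$, and presents no difficulty beyond careful indexing once the base maps $\lambda$ and the fibre maps $\Lambda$ are matched up.
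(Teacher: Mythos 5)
Your proposal is correct and follows essentially the same route as the paper's proof: the object part is delegated to the preparatory propositions and Remark~\ref{rem: Var}; well-definedness of $\lambda^\phi_\Xs$ into $\Xs^{\Cs^1}$ (rather than merely into $\Bs^{\Cs^1}$) rests on the non-degeneracy condition~\eqref{eq: non-trivial} together with the rank-$\leq 1$ property of Lemma~\ref{lem: ke}; convergence at infinity comes from the continuous extension $\Bs^{\Cs^2}_{AB}\to\Bs^{\Cs^1}_{\lambda^\phi_\Rs(AB)}$ preserving the points at infinity (your properness argument simply makes explicit what the paper states in one line); continuity and vanishing at infinity of $\Lambda^\phi$ follow from the fact that it carries pulled-back Gel'fand transforms to Gel'fand transforms; and functoriality is the same pull-back bookkeeping. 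Two differences deserve comment. Where the paper only proves that $\Lambda^\phi$ is fibrewise \emph{contractive}, by comparing quotient-norm infima, you obtain the stronger and cleaner statement that it is a fibrewise \emph{isometry} via the C*-identity in the rank-one quotients, $\|x+\ke(\omega\circ\phi)\|^2=|(\omega\circ\phi)(x^*\circ x)|=|\omega(\phi(x)^*\circ\phi(x))|=\|\phi(x)+\ke(\omega)\|^2$; this is correct, since the diagonal quotients are isometrically $\CC$ via the induced characters, and either statement suffices for the tube-mapping argument. Conversely, your object part is lighter than the paper's: Remark~\ref{rem: Var} only justifies the topological side (existence of the tubular topology, Banach-bundle structure, saturation), while the paper's proof still verifies that the fibrewise operations descend to the quotients and make $\Es^\Cs$ a unital Fell bundle over the groupoid $\Xs^\Cs$ --- well-definedness of composition, involution and identities, the C*-norm conditions and positivity, $*$-functoriality of $\pi^\Cs$, and continuity of the operations (e.g.\ via the tube inclusion $\T^{\Es^\Cs}_{\delta,x}\circ\T^{\Es^\Cs}_{\delta,y}\subset\T^{\Es^\Cs}_{\delta,x\circ y}$). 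These checks, though routine, are part of the content of the proposition (the Remark's assertion is forward-looking, not a proof), so your write-up should include them, along with the analogous algebraic verification that $\Lambda^\phi$ is a fibrewise-linear $*$-functor, rather than cite them as already established.
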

\begin{proof}
We start showing that $\Sigma(\Cs):=(\Es^\Cs,\pi^\Cs,\Xs^\Cs_{\theta^\Cs,\Os^\Cs})$ is a non-full spaceoid as in definition~\ref{def: spaceoid}. 

\medskip 

From propositions~\ref{prop: X-topology}, \ref{prop: X-delta} and \ref{prop: X-groupoid}, we have that the base spectral spaceoid consists of a topological groupoid $\Xs^\Cs$, equipped with a continuous functor $\Xs^\Cs\xrightarrow{\theta^\Cs}\Rs^\Cs$ over the discrete pair groupoid $\Rs^\Cs=\Os^\Cs\times\Os^\Cs$, with fibers $\Xs^\Cs_A$ that are compact Hausdorff, and with source and target maps $\Xs^\Cs_A\xleftarrow{t_{AB}}\Xs^\Cs_{AB}\xrightarrow{s_{AB}}\Xs^\Cs_B$ that are homeomorphisms with open subsets. 

\medskip 

From the definition~\ref{def: E}, we already know that, for all $[\omega]_{AB}\in\Xs^\Cs$, the fibers $\Es^\Cs_{[\omega]_{AB}}:=\left(\frac{\Cs}{\ke(\omega)}\right)_{AB}$ are Banach spaces with linear structure and norms given by:
\begin{align*}
&(x+\ke(\omega))_{AB} + \alpha\cdot(x'+\ke(\omega))_{AB}:=((x+\alpha\cdot x')+\ke(\omega))_{AB}, 
\quad x,x'\in\Cs_{AB}, \ \alpha\in \CC, 
\\
&\|(x+\ke(\omega))_{AB}\|_{\Es^\Cs}:=\inf \Big\{\|x+k\|_\Cs \ | \ k\in\ke(\omega_{AB}) \Big\}. 
\end{align*}

The total space $\Es^\Cs$ can be seen to be an involutive category with composition, identity and involution  
\begin{align*}
&\left(x+\ke(\omega)\right)_{AB}\circ\left(x+\ke(\omega)\right)_{BC} 
:=\left(x\circ y+\ke(\omega)\right)_{AC},
\quad x\in\Cs_{AB}, \ y\in \Cs_{BC}, 
\\
&\iota^{\Es^\Cs}_{[\omega]_{AA}}:=\left(\iota^\Cs_A+\ke(\omega)\right)_{AA}, 
\\
&\left(\left(x+\ke(\omega)\right)_{AB}\right)^*:=\left(x^*+\ke(\omega)\right)_{BA}, 
\end{align*}
that are all well-defined thanks to the fact that $\ke(\omega)$ is a $*$-categorical ideal in $\Cs$. 

\medskip 

Since $\frac{\Cs}{\ke(\omega)}$ is a C*-category, we always have, for $x\in\Cs_{AB}$ and $y\in\Cs_{BC}$,  
\begin{align*}
&\|(x+\ke(\omega))_{AB}\circ(y+\ke(\omega))_{BC}\|
\leq\|(x+\ke(\omega))_{AB}\|\cdot\|(y+\ke(\omega))_{AB}\|
\\
&\|\left((x+\ke(\omega))_{AB}\right)^*\circ\left(x+\ke(\omega)\right)_{AB}\|
=\|(x+\ke(\omega))_{AB}\|^2 
\end{align*}
and hence such properties are satisfied inside $\Es^\Cs$, simply restricting to the non-trivial fibers over $[\omega]_{AB}\in\Xs^\Cs$. 

\medskip 

Similarly, since $x^*\circ x\in\Cs_{BB}$, for all $x\in\Cs_{AB}$, is already a positive element in the C*-algebra $\Cs_{BB}$, also the element $(x^*\circ x+\ke(\omega))_{BB}=\left((x+\ke(\omega))_{AB}\right)^*\circ\left(x+\ke(\omega)\right)_{AB}$ is a positive element in the quotient C*-algebra $\Es^\Cs_{[\omega]_{AA}}:=\left(\frac{\Cs}{\ke(\omega)}\right)_{BB}$, for every $\left(x+\ke(\omega)\right)_{AB}\in\Es^\Cs$. 

\medskip 

The projection $\pi^\Cs:\Es^\Cs\to\Xs^\Cs$ given by $(x+\ke(\omega))_{AB}\mapsto [\omega]_{AB}$ is a $*$-functor: 
\begin{align*}
\pi^{\Es^\Cs}\left(\left(x+\ke(\omega)\right)_{AB}\circ\left(x+\ke(\omega)\right)_{BC} \right)
&=\pi^{\Es^\Cs}\left(
(x\circ y+\ke(\omega))_{AC}
\right)=[\omega]_{AC}=[\omega]_{AB}\circ[\omega]_{BC}
\\
&=\pi^{\Es^\Cs}\left((x+\ke(\omega))_{AB}\right)\circ\pi^{\Es^\Cs}\left((y+\ke(\omega))_{BC}
\right), 
\\
\pi^{\Es^\Cs}\left(
\iota^{\Es^\Cs}_{[\omega]_{AA}}
\right)
=\pi^{\Es^\Cs}\left(
(\iota^\Cs_A+\ke(\omega))_{AA}
\right)=[\omega]_{AA}
&=\iota^{\Xs^\Cs}_{[\omega]_{AA}}, 
\\
\pi^{\Es^\Cs}\Big(((x+\ke(\omega))_{AB})^*\Big)
=\pi^{\Es^\Cs}\Big((x^*+\ke(\omega))_{BA}\Big)
&=[\omega]_{BA}=([\omega]_{AB})^*
=\pi^{\Es^\Cs}\Big((x+\ke(\omega))_{AB}\Big)^*.
\end{align*} 

The existence of a Varela tubular topology on $\Es^\Cs$, induced by the family of Gel'fand transforms and by the Dauns-Hofmann uniformity determined by the norm on the fibers of $\Es^\Cs$, is already mentioned in remark~\ref{rem: Var}. 
With such topology $\Es^\Cs$ is a (Fell type) Banach bundle and hence $\pi^\Cs$ is continuous open. 

\medskip 

To show the continuity of compositions in $\Es^\Cs$, consider $e_1,e_2\in\Es^\Cs$, by saturation we can always find $x_1,x_2\in{\color{brown}\Cs^1}$ such that $\hat{x}_j(\pi^\Cs(e_j))=e_j$, for $j=1,2$, and hence $\widehat{x\circ y}(\pi(e_1\circ e_2))=e_1\circ e_2$, and for every $\epsilon>0$ consider any $\delta<\min\{\epsilon,1/2\}$ and notice the following inclusion of tubular neighborhoods  $\T^{\Es^\Cs}_{\delta,x}\circ\T^{\Es^\Cs}_{\delta,y}\subset \T^{\Es^\Cs}_{\delta,x\circ y}$. 
Continuity of involution addition and scalar multiplication are obtained with similar easier arguments. 

\medskip 

This completes the proof that the bundle $(\Es^\Cs,\pi^\Cs,\Xs^\Cs)$ equipped with the tubular topology induced by Gel'fand transforms is a complex (Fell type) Banach (locally trivial) line-bundle that is also a Fell bundle over the topological groupoid $\Xs^\Cs$. 

\medskip 

Next we show that, for any morphism $\Cs^1\xrightarrow{\phi}\Cs^2$ in the category $\Af$, the pair $\Sigma_\phi:=(\lambda^\phi,\Lambda^\phi)$ is a well-defined morphism $\Sigma(\Cs^1)\xleftarrow{\Sigma_\phi}\Sigma(\Cs^2)$ in the category $\Tf$. 

\medskip 

First of all $\Rs^{\Cs^1}\xleftarrow{\lambda^\phi_\Rs}\Rs^{\Cs^2}$ is trivially an isomorphism of discrete pair groupoids: for all $AB,BC\in\Rs^{\Cs^2}$:
\begin{equation*}
\lambda_\Rs^\phi(AB\circ BC)=\lambda_\Rs^\phi(AC)=(\phi^0)^{-1}(A)(\phi^0)^{-1}(C)=(\phi^0)^{-1}(A)(\phi^0)^{-1}(B)\circ(\phi^0)^{-1}(B)(\phi^0)^{-1}(C)=\lambda_\Rs^\phi(AB)\circ\lambda_\Rs^\phi(BC), 
\end{equation*}
furthermore $(\lambda_\Rs^\phi)^1(\iota^{\Rs^{\Cs^2}}(A)) =(\lambda_\Rs^\phi)^1(AA)=(\phi^0)^{-1}(A)(\phi^0)^{-1}(A)=\iota^{\Rs^{\Cs^1}}((\phi^0)^{-1}(A))=\iota^{\Rs^{\Cs^1}}((\lambda^\phi_\Rs)^{0}(A))$, for all $A\in\Os^{\Cs^2}$, and the invertibility of $\lambda^\phi_\Rs$ follows from the invertibility of $\phi^0$. 

\medskip 

We need to show that $\Xs^{\Cs^1}\xleftarrow{\lambda^\phi_\Xs}\Xs^{\Cs^2}$ is actually well-defined by equation~\eqref{eq: lambda}. The $\phi$-pull-back $\omega\mapsto \omega\circ\phi$ is a well-defined map $\phi^\bullet:[\Cs^2;\CC]\to[\Cs^1;\CC]$, as well as its restrictions $[\Cs^2;\CC]_{AB}\xrightarrow{}[\Cs^1;\CC]_{\lambda^\phi_\Rs}(AB)$, given by $\omega_{AB}\mapsto(\omega\circ\phi)_{\lambda^\phi_\Rs(AB)}$, for all $AB\in\Rs^{\Cs^2}$. 
The $\phi$-pull-back passes to the quotient (globally and locally), with respect to the equivalence relations induced by the actions of $\Aut_\Af(!^\bullet(\CC))$, because $\omega_2=\alpha\cdot\omega_1$ implies $\omega_2\circ\phi=\alpha\cdot \omega_1\circ\phi$, for all $\omega_1,\omega_2\in[\Cs^2;\CC]$ and $\alpha\in\Aut_\Af(!^\bullet(CC))$; as a consequence we have induced ``quotiented $\phi$-pull-back'' maps $\lambda^\phi_{\Bs_{AB}}:\Bs^{\Cs^2}_{AB}\xrightarrow{}\Bs^{\Cs^1}_{\lambda^\phi_\Rs(AB)}$, well-defined by  $[\omega]_{AB}\mapsto[\omega\circ\phi]_{\lambda^\phi_\Rs(AB)}$, for all $AB\in\Rs^{\Cs^2}$. 
We need to show that, for all $AB\in\Rs^{\Cs^2}$, the map $\lambda^\phi_{\Bs_{AB}}:\Bs^{\Cs^2}_{AB}\to\Bs^{\Cs^1}_{\lambda^\phi_\Rs(AB)}$ correctly restricts to a map $\lambda^\phi_{\Xs_{AB}}:\Xs^{\Cs^2}_{AB}\to\Xs^{\Cs^1}_{\lambda^\phi_\Rs(AB)}$. 

This is assured by the assumed non-degeneracy condition~\eqref{eq: non-trivial} on the morphism $\phi\in\Af^1$. 

\medskip 

We show now that $\Xs^{\Cs^1}\xleftarrow{\lambda^\phi_\Xs}\Xs^{\Cs^2}$ is a morphism of groupoids such that $\theta^{\Cs^1}\circ\lambda^\phi_\Xs=\lambda^\phi_\Rs\circ\theta^{\Cs^2}$. 

First of all it is immediate to check the equation above, evaluating in an arbitrary element $[\omega]_{AB}\in\Xs^{\Cs^2}$:
\begin{equation*}
\theta^{\Cs^1}\circ\lambda^\phi_\Xs([\omega]_{AB})=
\theta^{\Cs^1}([\omega\circ\phi]_{\lambda^\phi_\Rs(AB)}) =\lambda^\phi_\Rs(AB)=\lambda^\phi_\Rs\circ\theta^{\Cs^2}([\omega]_{AB}).  
\end{equation*}
Next, we check that $\lambda^\phi_\Xs:\Xs^{\Cs^2}\to \Xs^{\Cs^1}$ is a $*$-functor between groupoids: for all $[\omega]_{AB},[\omega]_{BC}\in\Xs^{\Cs^2}$, we have  
\begin{align*}
\lambda^\phi_\Xs([\omega]_{AB}\circ[\omega]_{BC})
&=\lambda^\phi_\Xs([\omega]_{AC})
=[\omega\circ\phi]_{\lambda^\phi_\Rs(AC)}
=[\omega\circ\phi]_{\lambda^\phi_\Rs(AB)}\circ[\omega\circ\phi]_{\lambda^\phi_\Rs(BC)}
\\
&=\lambda^\phi_\Xs([\omega]_{AB})\circ\lambda^\phi_\Xs([\omega]_{BC}). 
\end{align*}

Since $[\omega]_{AA}$ is an identity morphism in $\Xs^{\Cs^2}$ and $\lambda^\phi_\Rs([\omega])_{\lambda^\phi_\Rs(AA)}$ is an identity morphism in $\Xs^{\Cs^1}$
\begin{align*}
\lambda^\phi_\Xs([\omega]_{AA})
=[\omega\circ\phi]_{\lambda^\phi_\Rs(AA)}
=\lambda^\phi_\Rs([\omega])_{\lambda^\phi_\Rs(AA)}, 
\quad \forall A\in\Os^{\Cs^2}, \ \omega\in[\Cs^2;\CC], 
\end{align*}
we have that $\lambda^\phi_\Xs$ preserves identity morphism and hence it is a covariant functor. 

\medskip 

The functor $\lambda^\phi_\Xs:\Xs^{\Cs^2}\to\Xs^{\Cs^1}$ is involutive: 
\begin{align*}
\lambda^\phi_\Xs\left(([\omega]_{AB})^*\right)
=\lambda^\phi_\Xs\left([\omega]_{BA}\right)
=[\omega\circ\phi]_{\lambda^\phi_\Rs(BA)}
=\left([\omega\circ\phi]_{\lambda^\phi_\Rs(AB)}\right)^*
=\lambda^\phi_\Xs\left(([\omega]_{AB})\right)^*. 
\end{align*}

For the continuity of $\Xs^{\Cs^1}\xleftarrow{\lambda^\phi_\Xs}\Xs^{\Cs^2}$, we first observe that the $\phi$-pull-back $[\Cs^2;\CC]\xrightarrow{\phi^\bullet}[\Cs^1;\CC]$ given as $\omega\mapsto \omega\circ\phi$ is a continuous map with respect to the respective weak-$*$-topologies: 
\begin{equation*}
\ev_x\circ\phi^\bullet(\omega)=\ev_x(\phi^\bullet(\omega))=\omega(\phi(x))=\ev_{\phi(x)}(\omega), 
\quad \forall x\in\Cs^1; 
\end{equation*}
then we notice that the $\phi$-pull-back passes continuously to the quotients: the continuity of the map $[\omega]\mapsto [\omega\circ\phi]$ follows from the continuity of $\omega\mapsto [\omega\circ\phi]$ that is the composition of $\phi^\bullet$ with a quotient map. 

\medskip 

To prove that $\lambda^\phi$ is converging to infinity, it is enough to remember that $\lambda^\phi$ is actually defined (and continuous) as a map $\Bs^{\Cs^1}\xleftarrow{\lambda^\phi}\Bs^{\Cs^2}$ between the Alexandroff compactifications of $\Xs^{\Cs_j}$, with $j=1,2$, and it is preserving their points at infinity.

\medskip 

The following algebraic calculations show that $(\lambda^\phi)^\bullet(\Es^{\Cs^1})\xrightarrow{\Lambda^\phi}\Es^{\Cs^2}$ is a fibrewise linear $*$-functor: for all $x,x'\in\Cs^1_{\lambda^\phi_\Rs(AB)}$, $y\in\Cs^1_{\lambda^\phi_\Rs(BC)}$ and $A,B,C\in(\Cs^2)^0$ we have 
\begin{align*}
\Lambda^\phi_{[\omega]_{AB}} &\left(\left(x+\ke(\omega\circ\phi)_{\lambda^\phi_\Rs(AB)}\right)_{AB}
+\left(x'+\ke(\omega\circ\phi)_{\lambda^\phi_\Rs(AB)}\right)_{AB}
\right)
=\Lambda^\phi_{[\omega]_{AB}} \left(
(x+x')+\ke(\omega\circ\phi)_{\lambda^\phi_\Rs(AB)}
\right)_{AB}
\\
&=\Big(
\phi(x+x') +\ke(\omega\circ\phi)_{AB}
\Big)_{AB}
=\Big(
\phi(x) +\ke(\omega\circ\phi)_{AB}
\Big)_{AB} + \Big(
\phi(x') +\ke(\omega\circ\phi)_{AB}
\Big)_{AB}
\\
&=\Lambda^\phi_{[\omega]_{AB}}
\left(x+\ke(\omega\circ\phi)_{\lambda^\phi_\Rs(AB)}\right)_{AB}
+\Lambda^\phi_{[\omega]_{AB}}
\left(x'+\ke(\omega\circ\phi)_{\lambda^\phi_\Rs(AB)}\right)_{AB}, 
\end{align*}

\begin{align*}
\Lambda^\phi_{[\omega]_{AC}}&\left(
(x+\ke(\omega\circ\phi)_{\lambda^\phi_\Rs(AB)} \circ (y+\ke(\omega\circ\phi)_{\lambda^\phi_\Rs(BC)}
\right)
=\Lambda^\phi_{[\omega]_{AC}}\left(
x\circ y +\ke(\omega\circ\phi)_{\lambda^\phi_\Rs(AC)}
\right)
\\
&=\Big(\phi(x\circ y)+\ke(\omega)_{AC}\Big)_{AC}
=\Big(\phi(x)\circ\phi(y)+\ke(\omega)_{AC}\Big)_{AC}
\\
&=\Big(\phi(x)+\ke(\omega)_{AB}\Big)_{AB} \circ \Big(\phi(y)+\ke(\omega)_{BC}\Big)_{BC}
\\
&=\Lambda^\phi_{[\omega]_{AB}}\left((x+\ke(\omega\circ\phi)_{\lambda^\phi_\Rs(AB)} \right)
\circ\Lambda^\phi_{[\omega]_{BC}}\left((y+\ke(\omega\circ\phi)_{\lambda^\phi_\Rs(BC)} \right),
\end{align*}

\begin{align*}
\Lambda^\phi_{[\omega]_{AA}}\left(
\iota_{\lambda^\phi_\Rs(AA)}+\ke(\omega\circ\phi)_{\lambda^\phi_\Rs(AA)} 
\right)_{AA}
=\Big(
\phi(\iota_{\lambda^\phi_\Rs(AA)}) +\ke(\omega\circ\phi)_{AA}
\Big)_{AA}
=\Big(\iota_{(\phi^0)^{-1}(A)} + \ke(\omega\circ\phi)_{AA}\Big)_{AA}, 
\end{align*}

\begin{align*}
\Lambda^\phi_{[\omega]_{BA}}\left(
(x+\ke(\omega\circ\phi)_{\lambda^\phi_\Rs(AB)})^*
\right)_{BA}
&=
\Lambda^\phi_{[\omega]_{BA}}\left(
x^*+\ke(\omega\circ\phi)_{\lambda^\phi_\Rs(AB)}
\right)_{BA}
=
\Big(\phi(x^*)+\ke(\omega)_{BA}\Big)_{BA}
\\
&
=\Big(\phi(x)^*+\ke(\omega)_{BA}\Big)_{BA}
=\Big(\phi(x)+\ke(\omega)_{AB}\Big)_{AB}^*
\\
&
=\left(\Lambda^\phi_{[\omega]_{AB}}\left(x+\ke(\omega\circ\phi)_{\lambda^\phi_\Rs(AB)}\right)_{AB} \right)^*.
\end{align*}

Recallling the definition of $\Lambda^\phi$ from~\eqref{eq: lambda}, 
\begin{equation*}
\Lambda^\phi_{[\omega]_{AB}}: \left(x+\ke(\omega\circ\phi)_{\lambda^\phi_\Rs(AB)}\right)_{AB} \mapsto \Big(\phi(x)+\ke(\omega)_{AB}\Big)_{AB}, \quad x\in\Cs^1_{\lambda^\phi_\Rs(AB)}, \quad  \omega\in[\Cs^2;\CC], 
\end{equation*} 
we see that $\Lambda^\phi$ is fibrewise contractive: for all $k\in\ke(\omega\circ\phi)$, we have 
$\|x+k\|\leq \|\phi(x)+\phi(k)\|$ and hence 
\begin{align*} 
\|\Lambda^\phi\left(x+\ke(\omega\circ\phi)_{\lambda^\phi_\Rs(AB)}\right)_{AB}\|
&=\|\Big(\phi(x)+\ke(\omega)_{AB}\Big)_{AB}\| 
= \inf_{l\in\ker(\omega)}\|\phi(x)+l\, \|
\\
&\leq \inf_{k\in\phi^{-1}(\ke(\omega))} \|\phi(x)+\phi(k)\|
= \inf_{k\in\ke(\omega\circ\phi)}\|\phi(x)+\phi(k)\|
\\
&\leq \inf_{k\in\ker(\omega\circ\phi)}\|x+k\|
=\|\left(x+\ke(\omega\circ\phi)_{\lambda^\phi_\Rs(AB)}\right)_{AB}\|. 
\end{align*} 
Furthermore $\Lambda^\phi$ is mapping the $\lambda^\phi$-pull-back of the Gel'fand transforms of $x\in\Cs^1$ into the Gel'fand transforms of $\phi(x)\in\Cs^2$ and hence it maps $\epsilon$-tubes around $\hat{x}$ into $\epsilon$-tubes around $\phi(x)$:  $\Lambda^\phi(\T^{(\lambda^\phi)^\bullet(\Es^{\Cs^1})}_{\epsilon,\phi(x)})\subset\T^{\Es^{\Cs^1}}_{\epsilon,x}$ and this implies that $\Lambda^\phi$ is continuous with respect to the tubular topologies of the total spaces of the Fell bundles as well as vanishing at infinity. 

\medskip 

We show now the contravariance of the spectrum functor $\Tf\xleftarrow{\Sigma}\Af$. 

Given a pair of composable $*$-functors $\Cs^1\xrightarrow{\psi}\Cs^2\xrightarrow{\phi}\Cs^3$ in the category $\Af$, we get $\Sigma(\Cs^1)\xleftarrow{\Sigma_\psi}\Sigma(\Cs^2)\xleftarrow{\Sigma_\phi}\Sigma(\Cs^3)$ in the category $\Tf$ and we must show $\Sigma_{\phi\circ\psi}=\Sigma_\psi\circ\Sigma_\phi$; in more explicit form, from the following morphisms in $\Tf$  
\begin{equation*} 
\xymatrix{
(\Es^{\Cs^1},\pi^{\Cs^1},\Xs^{\Cs^1}_{\theta^{\Cs^1},\Os^{\Cs^1}})
& \ar[l]^{(\lambda^\psi,\Lambda^\psi)}
(\Es^{\Cs^2},\pi^{\Cs^2},\Xs^{\Cs^2}_{\theta^{\Cs^2},\Os^{\Cs^2}}) 
& \ar[l]^{(\lambda^\phi,\Lambda^\phi)}
(\Es^{\Cs^3},\pi^{\Cs^3},\Xs^{\Cs^3}_{\theta^{\Cs^3},\Os^{\Cs^3}})
\ar@/_0.5cm/[ll]_{(\lambda^{\phi\circ\psi},\Lambda^{\phi\circ\psi})} ,
}
\end{equation*}
we must see that: $\Sigma_{\phi\circ\psi}:=(\lambda^{\phi\circ\psi},\ \Lambda^{\phi\circ\psi})
=(\lambda^\psi\circ\lambda^\phi,\ \Lambda^\phi\circ(\lambda^\phi)^\bullet(\Lambda^\psi)\circ\Theta^{\Es^{\Cs^1}}_{\lambda^\psi,\lambda^\phi})
=:(\lambda^\psi,\Lambda^\psi)\circ(\lambda^\phi,\Lambda^\phi)
=:\Sigma_\psi\circ\Sigma_\phi$:
\begin{align*}
\lambda^\psi_\Rs\circ\lambda^\phi_\Rs(AB)&=\lambda_\Rs^\psi\left((\phi^0)^{-1}(A)(\phi^0)^{-1}(AB)\right)
=(\psi^0)^{-1}((\phi^0)^{-1}(A))(\psi^0)^{-1}((\phi^0)^{-1}(AB))
\\ 
&=(\phi^0\circ\psi^0)^{-1}(A)(\phi^0\circ\psi^0)^{-1}(B)
=\lambda_\Rs^{\phi\circ\psi}(AB), 
\quad \forall AB\in\Rs^{\Cs^3}, 
\\ 
\lambda_\Xs^\psi\circ\lambda_\Xs^\phi([\omega]_{AB})
& 
=\lambda_\Xs^\psi([\omega\circ\phi]_{\lambda_\Rs^\phi(AB)})
=[\omega\circ\phi\circ\psi]_{\lambda_\Rs^\psi\circ\lambda_\Rs^\phi(AB)}
=[\omega\circ(\phi\circ\psi)]_{\lambda_\Rs^{\phi\circ\psi}(AB)}
\\
&=\lambda^{\phi\circ\psi}_\Xs([\omega]_{AB}),
\quad \forall [\omega]_{AB}\in\Xs^{\Cs^3}, 
\\
\text{and hence}\quad 
&\lambda^\psi\circ\lambda^\phi =(\lambda^\psi_\Rs,\lambda^\psi_\Xs)\circ(\lambda^\phi_\Rs,\lambda^\phi_\Xs) =(\lambda^{\phi\circ\psi}_\Rs,\lambda^{\phi\circ\psi}_\Xs)=\lambda^{\phi\circ\psi}, 
\end{align*} 
furthermore, for all $AB\in\Rs^{\Cs^3}$ for all $\omega\in[\Cs^3;\CC]$, and all $x\in\Cs^1_{\lambda^{\phi\circ\phi}_\Rs(AB)}$, we get: 
\begin{align*} 
\Lambda^\phi\circ(\lambda^\phi)^\bullet(\Lambda^\psi)\circ \Theta^{\Es^{\Cs^1}}_{\lambda^\psi,\lambda^\phi}
&\left(x+\ke(\omega\circ\phi\circ\psi)_{\lambda_\Rs^{\phi\circ\psi}(AB)}\right)_{AB}
\\
&=\Lambda^\phi\circ(\lambda^\phi)^\bullet(\Lambda^\psi)
\left(x+\ke(\omega\circ\phi\circ\psi)_{\lambda_\Rs^{\phi\circ\psi}(AB)}\right)_{\lambda^{\phi\circ\psi}_\Rs(AB)}
\\
&=\Lambda^\phi\circ(\lambda^\phi)^\bullet(\Lambda^\psi) \left(
x+\ke(\omega\circ\phi\circ\psi)_{\lambda_\Rs^{\psi}\circ\lambda_\Rs^{\phi}(AB)}
\right)_{\lambda^{\psi}_\Rs\circ\lambda^\phi_\Rs(AB)}
\\
&=\Lambda^\phi_{} \left(\psi(x)+\ke(\omega\circ\phi)_{\lambda_\Rs^{\phi}(AB)}\right)_{\lambda^\phi_\Rs(AB)} 
\\
&=\left(\phi\circ\psi(x)+\ke(\omega)_{AB}\right)_{AB}
\\
&=\Lambda^{\phi\circ\psi}_{[\omega]_{AB}} \left(x+\ke(\omega\circ\phi\circ\psi)_{\lambda_\Rs^{\phi\circ\psi}(AB)}\right)_{AB}. 
\end{align*}
Finally for the identity $*$-functor $\Cs\xrightarrow{\iota_\Cs}\Cs$ we check that 
$\Sigma_{\iota_\Cs}:=(\lambda^{\iota_\Cs},\Lambda^{\iota_\Cs}) =(\iota_{\Xs^\Cs_{\theta^\Cs,\Os^\Cs}},\Theta_{\Es^\Cs})=:\iota_{\Sigma(\Es^\Cs)}$ as follows:
\begin{align*}
\lambda^{\iota_\Cs}_\Rs(AB)
&=((\iota^\Cs)^0)^{-1}(A)((\iota^\Cs)^0)^{-1}(B)=AB 
=\id_{\Rs^\Cs}(AB), \quad \forall AB\in\Rs^\Cs, 
\\
\lambda^{\iota^\Cs}_\Xs([\omega]_{AB})
&= [\omega\circ\iota^{\Cs}]_{\lambda^{\iota^\Cs}_\Rs(AB)}=[\omega]_{AB}
=\id_{\Xs^\Cs}([\omega]_{AB}), \quad \forall [\omega]_{AB}\in\Xs^\Cs, 
\end{align*}
and hence $\lambda^{\iota^\Cs}=(\lambda^{\iota^\Cs}_\Rs,\lambda^{\iota^\Cs}_\Xs)=(\id_{\Rs^\Cs},\id_{\Xs^\Cs}) =\iota_{\Xs^\Cs_{\theta^\Cs,\Os^\Cs}}$ and furthermore, for all $[\omega]_{AB}\in\Xs^\Cs$ and all $x\in\Cs_{AB}$: 
\begin{align*} 
\Lambda^{\iota^\Cs}_{[\omega]_{AB}}\left(x+\ke(\omega\circ\iota^\Cs)_{\lambda^{\iota^\Cs}_\Rs(AB)}\right)_{AB}
&=\left(\iota^\Cs(x)+\ke(\omega)_{AB}\right)_{AB} 
=\left(x+\ke(\omega)_{AB}\right)_{AB}
\\
&=\left(
x+\ke(\omega\circ\iota^\Cs)_{\lambda^{\iota^\Cs}_\Rs(AB)}
\right)_{\lambda^{\iota^\Cs}_\Rs(AB)}
=\Theta_{\Es^\Cs}\left(
x+\ke(\omega\circ\iota^\Cs)_{\lambda^{\iota^\Cs}_\Rs(AB)}
\right)_{AB}, 
\end{align*}
that completes the proof of contravariant functoriality of $\Sigma$. 
\end{proof}

\section{The Gel'fand Natural Isomorphism $\Gg$} \label{sec: 6}
 
We are ready to prove our non-full generalization of~\cite[theorem~6.2]{BCL11}. 
\begin{definition}
Given a C*-category $\Cs\in\Af^0$, we define its \emph{Gel'fand transform} $\Gg_\Cs\in \Hom_\Af(\Cs;\Gamma(\Sigma(\Cs)))$ by: 
\begin{gather*}
\Gg^0_\Cs: A\mapsto A, \quad \forall A\in\Cs^0=\Gamma(\Sigma(\Cs))^0, 
\\ 
\Gg^1_\Cs: x\mapsto \hat{x}, \quad \hat{x}: [\omega]_{AB}\mapsto x+\ke(\omega)_{AB}, \quad \forall  
[\omega]_{AB}\in \Xs^\Cs_{AB}, \quad \forall x\in\Cs^1.  
\end{gather*}
\end{definition}

\begin{theorem}\label{th: Gg}
The Gel'fand transform $\Cs\mapsto \Gg_\Cs$ is a natural isomorphism $\Ig_\Af\xrightarrow{\Gg}\Gamma\circ\Sigma$ between the identity functor $\Ig_\Af$ of the category $\Af$, discussed in section~\ref{sec: 2}, and the endofunctor $\Gamma\circ\Sigma$, obtained composing the spectrum functor $\Sigma$, defined in section~\ref{sec: 5} with the section functor $\Gamma$, defined in section~\ref{sec: 4}. 
\end{theorem}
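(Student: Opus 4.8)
The plan is to verify three things: that each $\Gg_\Cs$ is a morphism of $\Af$, that it is invertible in $\Af$, and that the family $\{\Gg_\Cs\}_{\Cs\in\Af^0}$ is natural with respect to the covariant endofunctor $\Gamma\circ\Sigma$. That $\Gg_\Cs$ is an object-bijective covariant $*$-functor is largely built into definition~\ref{def: E}: the maps $\hat{x}$ are, by construction, the local Gel'fand sections generating the tubular topology of $\Es^\Cs$, hence continuous, and preservation of addition, scalar multiplication, composition, involution and identities follows at once from the definition of these operations on $\Gamma(\Es^\Cs)$ in definition~\ref{def: Gamma} together with the fact, recorded in lemma~\ref{lem: ke}, that $\ke(\omega)$ is a categorical $*$-ideal. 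Since $\Gg^0_\Cs=\id_{\Cs^0}$ it is object-bijective, and non-degeneracy will be automatic once $\Gg_\Cs$ is an isomorphism. The only genuinely analytic point at this stage is that $\hat{x}\in\Gamma(\Es^\Cs)_{AB}$ vanishes at infinity, which I treat together with the isometry.

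The crucial step is to prove that $\Gg_\Cs$ is isometric on each $\Cs_{AB}$, which yields both injectivity and the vanishing at infinity of $\hat{x}$. First I would reduce to the diagonal: for fixed $[\omega]_{AB}\in\Xs^\Cs_{AB}$ the fibre $(\Cs/\ke(\omega))_{AB}$ is one-dimensional, so the C*-identity in the quotient C*-category gives $\|x+\ke(\omega)_{AB}\|^2=\|x^*\circ x+\ke(\omega)_{BB}\|=\omega_{BB}(x^*\circ x)$, where $\omega_{BB}\in\Sp(\Cs_{BB})$ since $(\Cs/\ke(\omega))_{BB}\cong\CC$. Hence $\|\hat{x}\|^2=\sup_{[\omega]_{AB}\in\Xs^\Cs_{AB}}\omega_{BB}(x^*\circ x)$. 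By proposition~\ref{prop: X-topology} the source map $s^\Cs_{AB}$ is a homeomorphism of $\Xs^\Cs_{AB}$ onto an open set $O_B^{AB}\subset\Sp(\Cs_{BB})$; as observed after remark~\ref{rem: -imp}, $\omega_{AB}=0$ exactly when $\omega_{BB}$ annihilates the ideal $\ip{\Cs_{AB}}{\Cs_{AB}}_B$, so $O_B^{AB}$ is precisely the support open set of that ideal. Since $x^*\circ x=\ip{x}{x}_B$ lies in the ideal, the function $\widehat{x^*\circ x}$ on $\Sp(\Cs_{BB})$ vanishes off $O_B^{AB}$, whence $\sup_{O_B^{AB}}\omega_{BB}(x^*\circ x)=\sup_{\Sp(\Cs_{BB})}\omega_{BB}(x^*\circ x)=\|x^*\circ x\|=\|x\|^2$. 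This proves $\|\hat{x}\|=\|x\|$; vanishing at infinity then follows because $[\omega]_{AB}\mapsto\|\hat{x}([\omega]_{AB})\|^2$ is the pullback along $s^\Cs_{AB}$ of the $C_0(O_B^{AB})$-function $\widehat{x^*\circ x}$.

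For surjectivity I would argue that $\widehat{\Cs_{AB}}$ is closed in $\Gamma(\Es^\Cs)_{AB}$ (being the isometric image of the complete space $\Cs_{AB}$) and is a sub-bimodule over the diagonal algebras $\Gamma(\Es^\Cs)_{AA}\cong C(\Xs^\Cs_A)$ and $\Gamma(\Es^\Cs)_{BB}\cong C(\Xs^\Cs_B)$, on which, by proposition~\ref{prop: X-delta} and classical Gel'fand--Na\u\i mark duality, $\Gg_\Cs$ restricts to the ordinary Gel'fand isomorphism. By definition~\ref{def: E} the transforms $\hat{x}$ exhaust each fibre $\Es^\Cs_{[\omega]_{AB}}$, so $\widehat{\Cs_{AB}}$ is fibrewise everything; a Stone--Weierstrass/Serre--Swan argument for $\Gamma_o$-sections of the locally trivial line bundle $(\Es^\Cs_{AB},\pi^\Cs_{AB},\Xs^\Cs_{AB})$ — local triviality via a non-vanishing transform, patched by a partition of unity realised through the surjective diagonal transform — then forces $\widehat{\Cs_{AB}}=\Gamma(\Es^\Cs)_{AB}$, so $\Gg_\Cs$ is an isomorphism in $\Af$.

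Finally, naturality amounts to the identity $\Gamma(\Sigma(\Phi))\circ\Gg_\Cs=\Gg_\Ds\circ\Phi$ for every $\Phi\in\Hom_\Af(\Cs;\Ds)$, which is a direct computation. Using definition~\ref{def: Gamma} and the formulas~\eqref{eq: lambda}, for $x\in\Cs_{\lambda^\Phi_\Rs(AB)}$ and $[\omega]_{AB}\in\Xs^\Ds$ one has $\Gamma_{\Sigma_\Phi}(\hat{x})([\omega]_{AB})=\Lambda^\Phi_{[\omega]_{AB}}\bigl(\hat{x}(\lambda^\Phi([\omega]_{AB}))\bigr)=\Lambda^\Phi_{[\omega]_{AB}}\bigl(x+\ke(\omega\circ\Phi)_{\lambda^\Phi_\Rs(AB)}\bigr)=\Phi(x)+\ke(\omega)_{AB}=\widehat{\Phi(x)}([\omega]_{AB})$, that is $\Gamma(\Sigma(\Phi))(\hat{x})=\widehat{\Phi(x)}=\Gg_\Ds(\Phi(x))$; combined with the fact that each $\Gg_\Cs$ is an isomorphism this exhibits $\Gg$ as a natural isomorphism. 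I expect the surjectivity/Stone--Weierstrass step to be the main obstacle, closely followed by the ideal-support identification underlying the isometry; the remaining verifications are routine bookkeeping already implicit in the constructions of sections~\ref{sec: 4} and~\ref{sec: 5}.
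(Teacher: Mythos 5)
Your proposal is correct and follows the same skeleton as the paper's proof: routine verification that $\Gg_\Cs$ is an object-bijective $*$-functor, isometry (hence injectivity), surjectivity via closedness of the isometric image combined with a fibrewise-saturation/partition-of-unity density argument, and naturality by exactly the same pull-back computation. Two of your steps are executed differently, in ways worth recording. For the isometry, the paper argues in one line at the level of sections, $\|\Gg_\Cs(x)\|^2=\|\Gg_\Cs(x^*\circ x)\|=\|x^*\circ x\|$, using the C*-identity in $\Gamma(\Sigma(\Cs))$ and the classical Gel'fand--Na\u\i mark isometry on $\Cs_{BB}$; you instead work fibrewise and identify the image $s^\Cs_{AB}(\Xs^\Cs_{AB})\subset\Sp(\Cs_{BB})$ with the support open set of the ideal $\ip{\Cs_{AB}}{\Cs_{AB}}_B$. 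This costs a little more (it uses the bijectivity of the restriction $[\Cs;\CC]\to\Sp(\Cs_{BB})$ from proposition~\ref{prop: X-delta}), but it buys something the paper leaves implicit, deferring it to remark~\ref{rem: Var}: an explicit proof that $\hat{x}$ vanishes at infinity, i.e.\ that $\Gg_\Cs$ really lands in $\Gamma(\Sigma(\Cs))_{AB}$ and not merely in the continuous sections; the same identification also makes your handling of non-degeneracy (automatic for isomorphisms) cleaner than the paper's silence on the point. For surjectivity, the paper hand-builds the $\epsilon$-approximation: it passes to the Alexandroff compactification $\hat{\Xs}^\Cs_{AB}$, extracts a finite subcover, pulls a partition of unity back through the target homeomorphism $t^\Cs_{AB}$ into $C(\Xs^\Cs_A)\simeq\Gg_\Cs(\Cs_{AA})$, and forms $\tau_\epsilon=\sum_j\hat{z}_j\circ\hat{x}_{[\omega_j]}\in\Gg_\Cs(\Cs_{AB})$; you invoke the standard Stone--Weierstrass theorem for $\Gamma_o$-sections of Banach bundles applied to the closed, fibrewise-exhausting sub-bimodule $\widehat{\Cs_{AB}}$ over the diagonal algebras. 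That shortcut is legitimate provided the bundle version of Stone--Weierstrass is cited or reproved; your parenthetical sketch of how to prove it is precisely the paper's construction, so mathematically nothing separates the two routes.
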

\begin{proof}
Given a small C*-category $\Cs$, the proof that $\Cs\xrightarrow{\Gg_\Cs}\Gamma(\Sigma(\Cs))$ is an object-bijective $*$-functor and hence a morphism in $\Af^1$ is standard: by definition $\Gamma(\Sigma(\Cs))^0=\Sigma(\Cs)^0=\Cs^0$ and $\Gg_\Cs^0$ is the identity map of $\Cs^0$; furthermore, for all $x,y\in\Cs^1$ and $A\in\Cs^0$:
\begin{gather*}
(\widehat{x+y})({[\omega]_{AB}})=(x+y)+\ke(\omega)_{AB}=(x+\ke(\omega)_{AB})+(y+\ke(\omega)_{AB})
=(\hat{x}+\hat{y})({[\omega]_{AB}}), 
\\
(\widehat{x\circ y})({[\omega]_{AB}})=(x\circ y)+\ke(\omega)_{AB}=(x+\ke(\omega)_{AB})\circ (y+\ke(\omega)_{AB})
=(\hat{x}\circ\hat{y})({[\omega]_{AB}}), 
\\
(\widehat{x^*})({[\omega]_{AB}})=x^*+\ke(\omega)_{AB}=(x+\ke(\omega))^*)_{[\omega]_{AB}} =((\hat{x})^*)({[\omega]_{AB}}), 
\\
(\widehat{\iota^\Cs}_A)({[\omega]_{AB}})=\iota^\Cs_A +\ke(\omega)_{AB} =(\iota^{\Gamma(\Sigma(\Cs))}_A)({[\omega]_{AB}}). 
\end{gather*} 
The natural transformation property of $\Gg$ is obtained in exactly the same way as in the proof of the commuting square in~\cite[page~596]{BCL11}: 
for all $x\in\Cs_1^1$, for all $[\omega]_{AB}\in\Xs^{\Sigma(\Cs_2)}$
\begin{align*}
\left(\Gamma_{\Sigma^\phi}\circ\Gg_{\Cs_1}(x)\right)([\omega]_{AB})
&=\left(\Gamma_{\Sigma^\phi}(\hat{x})\right)([\omega]_{AB})
=\left(\Lambda^\phi\circ(\lambda^\phi)^\bullet(\hat{x})\right)([\omega]_{AB})
=\Lambda^\phi\circ\hat{x}([\omega\circ\lambda^\phi]_{\lambda^\phi(AB)})
\\
&=\Lambda^\phi\left(x+\ker(\omega\circ\lambda^\phi)_{\lambda^\phi(AB)}\right) 
=\phi(x) +\ker(\omega)_{AB}
=\widehat{\phi(x)}([\omega]_{AB})
\\
&=\left(\Gg_{\Cs_2}\circ\phi(x)\right)([\omega]_{AB}).
\end{align*} 
The isometric property of $\Gg$ is immediate: $\|\Gg_\Cs(x)\|^2=\|\Gg_\Cs(x)^*\circ\Gg_\Cs(x)\|=\|\Gg_\Cs(x^*\circ x)\|=\|x^*\circ x\|=\|x\|^2$, for all $x\in \Cs_{AB}$, by the C*-property and by the usual isometric property of the Gel'fand-Na\u\i mark transform for the commutative C*-algebra $\Cs_{BB}$ that contains $x^*\circ x$. The injectivity of $\Gg_\Cs$ follows from its isometricity. 

\medskip 

The image $\Gg_\Cs(\Cs)\subset\Gamma(\Sigma(\Cs))$ is closed under composition, involution and identity (since $\Gg_\Cs$ is a morphism of C*-categories); furthermore $\Gg_\Cs(\Cs)$ is complete (since $\Gg_\Cs$ is isometric) and hence closed inside the C*-category $\Gamma(\Sigma(\Cs))$. 
The surjectivity of $\Gg_\Cs$ follows as soon as we can prove that $\Gg_\Cs(\Cs)$ is uniformly dense in $\Gamma(\Sigma(\Cs))$. 

\medskip 

The density of $\Gg_\Cs(\Cs_{AA})$ into $\Gamma(\Sigma(\Cs_{AA}))$, for all objects $A\in\Cs^0$, is a consequence of the Gel'fand-Na\u\i mark theorem for the commutative unital C*-algebra $\Cs_{AA}$, since we have the canonical isomorphism of C*-algebras $\Gamma(\Sigma(\Cs_{AA}))\simeq C(\Sp(\Cs_{AA}))$. 

\medskip 

For general $A,B\in\Cs^0$, we directly prove the density of $\Gg_\Cs(\Cs_{AB})$ into $\Gamma(\Sigma(\Cs))_{AB}$. Consider an arbitrary continuous section $\sigma\in\Gamma(\Sigma(\Cs))_{AB}$ vanishing at infinity. For every point $[\omega]_{AB}\in\Xs^\Cs_{AB}$, from the saturation of the bundle $\Es^\Cs_{AB}$, we can find an element $x_{[\omega]}\in\Cs_{AB}$ whose Gel'fand transform $\hat{x}_{[\omega]}\in\Gamma(\Sigma(\Cs))_{AB}$ satisfies $\hat{x}_{[\omega]}([\omega]_{AB})=\sigma([\omega]_{AB})$. 
From the continuity of $\hat{x}_{[\omega]}$, and the continuity of the norm of the Fell-type Banach bundle $\Es^\Cs_{AB}$, we actually know that there exists an open neighborhood $\U_{[\omega]}$ of $[\omega]_{AB}$ such that $\|\sigma([\rho]_{AB}) -\hat{x}_{[\omega]}([\rho]_{AB})\|_{\Es^\Cs}<\epsilon$, for all $[\rho]_{AB}\in\U_{[\omega]}$. 
The family $\{\U_{[\omega]} \ | \ [\omega]_{AB}\in\Xs^\Cs_{AB}\}$ is an open covering of the locally compact Hausdorff space $\Xs^\Cs_{AB}$. 
Let us consider $\hat{\Xs}^\Cs_{AB}:=\Xs^\Cs_{AB}\uplus\{\infty_{AB}\}$, the Alexandroff compactification of $\Xs^\Cs_{AB}$; since both $\sigma$ and $\hat{x}_{[\omega]}$ are vanishing at infinity, we actually have that $\hat{\U}_{[\omega]}:=\U_{[\omega]}\cup\{\infty_{AB}\}$ is an open neighborhood of the point at infinity $\infty_{AB}$ and hence the family $\{\hat{\U}_{[\omega]} \ | \ [\omega]_{AB}\in\Xs^\Cs_{AB}\}$ is an open covering of the compact Hausdorff topological space $\hat{\Xs}^\Cs_{AB}$. By compactness of $\hat{\Xs}^\Cs_{AB}$ we can extract a finite open subcovering $\{\hat{\U}_{[\omega_j]} \ | \ j=1,\dots,N\}$ of $\hat{\Xs}^\Cs_{AB}$ and hence a finite open covering $\{\U_{[\omega_j]} \ | \ j=1,\dots,N\}$ of $\Xs^\Cs_{AB}$. Since $\hat{\Xs}^\Cs_{AB}$ is compact Hausdorff, it is paracompact and hence it admits a partition of unity $\{f_j \ | \ j=1,\dots,N\}$ with continuous functions with compact support subordinated to the covering $\{\hat{\U}_{[\omega_j]} \ | \ j=1,\dots,N\}$. 

\medskip 

Since the target map $t^\Cs_{AB}: \Xs^\Cs_{AB}\to\Xs^\Cs_{AA}$ is an homeomorphism onto an open subset of the normal space $\Xs^\Cs_{AA}$, the $(t^\Cs_{AB})^{-1}$-pull-back of the functions $f_j$, for $j=1,\dots,N$, gives continuous functions on the open set $t^\Cs_{AB}(\Xs^\Cs_{AB})$ that admit continuous extensions $g_j$ to all of $\Xs^\Cs_{AA}$, with respective constant values $f_j(\infty_{AB})$ on $\Xs^\Cs_{AA}-t^\Cs_{AB}(\Xs^\Cs_{AB})$. 
Making full usage of the already mentioned canonical isomorphism $C(\Xs^\Cs_{AA})\simeq\Gamma(\Es^\Cs_{AA})$, we can identify the continuous functions $\{g_j \ | \ j=1,\dots,N\}\subset C(\Xs^\Cs_{AA})$ with continuous sections in $\Gamma(\Es^\Cs_{AA})$. 
The continuous section $\tau_\epsilon:=\sum_{j=1}^Ng_j\circ\hat{x}_{[\omega_j]}$ is vanishing at infinity (since all the $g_j$ are bounded), hence $\tau_\epsilon\in\Gamma(\Sigma(\Cs))_{AB}$ and furthermore satisfies $\|\sigma([\omega]_{AB}) -\tau_\epsilon([\rho_{AB}])\|<\epsilon$, for all $[\rho]_{AB}\in\Xs^\Cs_{AB}$. 
Using the Gel'fand isomorphism on the diagonal Hom-set $\Cs_{AA}$, we have the existence of $\{z_j \ | \ j=1,\dots,N\}\in\Cs_{AA}$ such that $\hat{z}_j=g_j$, for all $j=1,\dots,N$, and hence 
$\tau_\epsilon=\sum_{j=1}^N \hat{z}_j\circ \hat{x}_j\in\Gg_\Cs(\Cs_{AB})$ and this completes the proof of uniform density of $\Gg_\Cs(\Cs_{AB})$ into $\Gamma(\Sigma(\Cs))_{AB}$ and hence the surjectivity of $\Gg_\Cs$. 
\end{proof} 

\section{The Evaluation Natural Isomorphism $\Eg$} \label{sec: 7}

Here we obtain the non-full version of\cite[theorem~6.3]{BCL11}. 
\begin{definition}
Given a spaceoid $(\Es,\pi,\Xs_{\theta,\Os})\in\Tf^0$, its \emph{evaluation transform} $\Eg_\Es\in \Hom_\Tf(\Es;\Sigma(\Gamma(\Es)))$ is defined by $\Eg_\Es:=(\eta^\Es,\Omega^\Es)$, with $\eta^\Es:=(\eta^\Es_\Xs,\eta^\Es_\Rs)$ where: 
\begin{gather*}
\eta^\Es_\Xs: \Xs\to \Xs^{\Gamma(\Es)}, 
\quad 
\eta^\Es_\Xs: p_{AB} \mapsto [\ev_{p_{AB}}]_{AB}, 
\quad 
\ev_{p_{AB}} :\sigma\mapsto \sigma(p_{AB})\in\Es_{p_{AB}}, 
\\
\eta^\Es_\Rs: \Os \to \Os^{\Gamma(\Es)}, \quad \eta_\Rs^\Es: AB\mapsto AB, 
\\
\Omega^\Es: (\eta_\Xs^\Es)^\bullet (\Es^{\Gamma(\Es)})\to \Es, \quad 
\Omega^\Es_{p_{AB}}:\left(\sigma+\ke(\eta^\Es_\Xs(p_{AB}))_{\eta^\Es_\Rs(AB)}\right)_{p_{AB}}\mapsto \sigma(p_{AB}). 
\end{gather*}
\end{definition}

\begin{theorem}\label{th: Eg}
The evaluation transform $(\Es,\pi,\Xs_{\theta,\Os})\mapsto \Eg_\Es$ is a natural isomorphism $\Ig_\Tf\xrightarrow{\Eg}\Sigma\circ\Gamma$ 
between the identity functor $\Ig_\Tf$ of the category $\Tf$, discussed in section~\ref{sec: 3}, and the endofunctor $\Sigma\circ\Gamma$, obtained composing the section functor $\Gamma$, defined in section~\ref{sec: 4} with 
the spectrum functor $\Sigma$, defined in section~\ref{sec: 5}.
\end{theorem}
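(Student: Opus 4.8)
The plan is to follow the strategy of the Gel'fand case (Theorem~\ref{th: Gg}) and of~\cite[theorem~6.3]{BCL11}, establishing in order that each $\Eg_\Es$ is a well-defined morphism in $\Tf^1$, that it is an isomorphism, and finally that the family $(\Eg_\Es)_{\Es\in\Tf^0}$ is natural. For the first point, the component $\eta^\Es_\Rs$ is literally the identity on objects, hence trivially an isomorphism of discrete pair groupoids. That $\eta^\Es_\Xs$ is a continuous $*$-functor compatible with the projections $\theta$ follows, as in the proof of Proposition~\ref{prop: spectral}, from the fact that it pulls evaluations back to (quotients of) evaluations: for $\sigma\in\Gamma(\Es)^1$ the assignment $p_{AB}\mapsto\ev_{p_{AB}}(\sigma)=\sigma(p_{AB})$ is continuous because $\sigma$ is a continuous section, while the functorial identities $\ev_{p_{AB}\circ p_{BC}}=\ev_{p_{AB}}\cdot\ev_{p_{BC}}$, $\ev_{(p_{AB})^*}=(\ev_{p_{AB}})^*$ are immediate from the pointwise definitions of the operations in $\Gamma(\Es)$; the vanishing-at-infinity of sections forces $\eta^\Es_\Xs$ to be converging at infinity. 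For $\Omega^\Es$, well-definedness of each fibre map $\Omega^\Es_{p_{AB}}\colon(\sigma+\ke(\ev_{p_{AB}}))_{AB}\mapsto\sigma(p_{AB})$ is clear since two sections with the same class agree at $p_{AB}$; linearity, multiplicativity and $*$-preservation are inherited from the pointwise operations, and continuity together with vanishing at infinity follow from the defining identity
\begin{equation*}
\Omega^\Es\circ(\eta^\Es_\Xs)^\bullet(\hat{\sigma})=\sigma,\qquad\forall\,\sigma\in\Gamma(\Es),
\end{equation*}
which says that $\Omega^\Es$ carries the Gel'fand transform of a section back to the section itself, hence maps the tubular neighbourhoods generating the topology of $\Es^{\Gamma(\Es)}$ into those of $\Es$.

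Next I would prove that $\Eg_\Es$ is an isomorphism by treating $\eta^\Es_\Xs$ and $\Omega^\Es$ separately. On the diagonal the restriction $\eta^\Es_\Xs\colon\Xs_A\to\Xs^{\Gamma(\Es)}_A$ is exactly the classical Gel'fand--Na\u\i mark evaluation homeomorphism $\Xs_A\to\Sp(\Gamma(\Es)_{AA})$, using $\Gamma(\Es)_{AA}\simeq C(\Xs_A)$ and Proposition~\ref{prop: X-delta}. Since $\Es$ is saturated and rank-one, for each $p_{AB}\in\Xs_{AB}$ there is a section non-vanishing at $p_{AB}$, so $\ev_{p_{AB}}|_{AB}\neq0$ and hence $\eta^\Es_\Xs(p_{AB})$ really lands in the rank-one locus $\Xs^{\Gamma(\Es)}_{AB}$; injectivity on the off-diagonal then follows from the functoriality $t^{\Gamma(\Es)}_{AB}\circ\eta^\Es_\Xs=\eta^\Es_\Xs\circ t_{AB}$ together with injectivity of the (homeomorphic) target maps and of $\eta^\Es_\Xs$ on the diagonal. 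For $\Omega^\Es_{p_{AB}}$ both source and target fibres are one-dimensional, the map is injective (if $\sigma(p_{AB})=0$ then $\sigma\in\ke(\ev_{p_{AB}})$) and surjective by saturation, hence a linear isomorphism.

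The main obstacle is the surjectivity of $\eta^\Es_\Xs$ on the off-diagonal Hom-sets, i.e.\ the identification of the abstract rank-one locus $\Xs^{\Gamma(\Es)}_{AB}$ with $\Xs_{AB}$. Given $[\omega]_{AB}\in\Xs^{\Gamma(\Es)}_{AB}$, its target $[\omega]_{AA}$ corresponds under the diagonal homeomorphism to a point $q_A\in\Xs_A$; the condition $\omega_{AB}\neq0$ must be translated, via the inner-product relation $|\omega_{AB}(\sigma)|^2=\omega_{AA}(\sigma\circ\sigma^*)$ and the spectral theory of the ideal generated by the left inner products ${}_A\ip{\Gamma(\Es)_{AB}}{\Gamma(\Es)_{AB}}$ (whose common non-vanishing locus is precisely $t_{AB}(\Xs_{AB})$), into the statement that $q_A\in t_{AB}(\Xs_{AB})$, yielding a unique $p_{AB}\in\Xs_{AB}$ with $t_{AB}(p_{AB})=q_A$. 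As $\eta^\Es_\Xs(p_{AB})$ and $[\omega]_{AB}$ then share the same target and, by the same inner-product relation, the same modulus on $\Gamma(\Es)_{AB}$, they have equal kernels and differ by a phase in $\TT$, so $\eta^\Es_\Xs(p_{AB})=[\omega]_{AB}$. With bijectivity established, I would conclude that $\eta^\Es_\Xs$ is a homeomorphism by extending it to a continuous bijection $\hat{\Xs}_{AB}\to\Bs^{\Gamma(\Es)}_{AB}$ between the Alexandroff compactifications (sending point at infinity to point at infinity), which is automatically a homeomorphism of compact Hausdorff spaces (Lemma~\ref{lem: E-closed} and Remark~\ref{rem: X-topology}), and restricting to the open subsets. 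Finally, naturality of $\Eg$ against a morphism $(f,F)\in\Tf^1$ reduces, exactly as in~\cite[theorem~6.3]{BCL11}, to a direct verification on points $p_{AB}\in\Xs^1$ and on fibre elements $\sigma(p_{AB})$, unwinding the definitions of $\Gamma_{(f,F)}$ and of the spectral morphism $\Sigma_{\Gamma_{(f,F)}}$.
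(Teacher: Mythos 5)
There is a genuine gap at the very first step of your argument: you manipulate $\ev_{p_{AB}}$ as if it were a $\CC$-valued functional on $\Gamma(\Es)$ (you form products $\ev_{p_{AB}}\cdot\ev_{p_{BC}}$, speak of its kernel, its modulus, and of phases in $\TT$), but by definition $\ev_{p_{AB}}(\sigma)=\sigma(p_{AB})$ lives in the abstract one-dimensional fiber $\Es_{p_{AB}}$, which carries \emph{no canonical identification with} $\CC$. Moreover, an element of $\Xs^{\Gamma(\Es)}_{AB}$ is by definition the class $[\omega]_{AB}$ of the restriction of a \emph{globally defined} $*$-functor $\omega\in[\Gamma(\Es);\CC]$, so a functional defined only on the single Hom-set $\Gamma(\Es)_{AB}$ does not even qualify as a point of the spectrum. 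Before any of your subsequent steps one must: (i) assemble the evaluations coherently over the maximal pair subgroupoid $\Xs^p$ through $p$ (remark~\ref{rem: p}) into a $*$-functor $\ev_p:\Gamma(\Es)\to\Es^p$ valued in a rank-$\leq 1$ C*-category, extended by zero outside $\Rs^p:=\theta(\Xs^p)$; (ii) choose an isomorphism of $\Es^p$ with the zero-extension of the trivial rank-one C*-category, thereby producing an honest element of $[\Gamma(\Es);\CC]$; and (iii) verify that the resulting class $[\ev_p]_{AB}$ is independent of this choice, because any two such trivializations differ by an element of $\Aut_\Af(!^\bullet(\CC)|_{\Xs^p})$ — precisely the gauge equivalence that the $\Bs$-quotient divides out. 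This is the first third of the paper's proof and is exactly where non-fullness bites; without it $\eta^\Es_\Xs$ is not defined. The same issue silently resurfaces in your naturality step, which you defer to a ``direct verification'': there the required identity $[\ev_{p_{AB}}\circ\Gamma_{(f,F)}]_{AB}=[\ev_{f_\Xs(p_{AB})}]_{f_\Rs(AB)}$ is not a literal equality of functionals (the two sides take values in fibers of \emph{different} bundles), but an equality of gauge classes, which the paper proves by showing that $F$ restricts to an isomorphism of the relevant rank-one sub-C*-categories.

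On the positive side, your treatment of the isomorphism property is more explicit than the paper's at the one point where the paper is terse: surjectivity of $\eta^\Es_\Xs$ on off-diagonal Hom-sets. The paper essentially only invokes the commuting source/target squares, whereas your argument — translating $\omega_{AB}\neq 0$ via $|\omega_{AB}(\sigma)|^2=\omega_{AA}(\sigma\circ\sigma^*)$ and the spectral theory of the inner-product ideal into $q_A\in t_{AB}(\Xs_{AB})$, then matching kernels and moduli to conclude equality up to a phase — is sound and is the technique of proposition~\ref{prop: X-topology} correctly transplanted. One caveat: your closing claim that the extension to Alexandroff compactifications is ``automatically a homeomorphism'' is not right as stated, since continuity of the extension at the point at infinity is equivalent to properness of $\eta^\Es_\Xs$, which is not automatic for a continuous bijection of locally compact Hausdorff spaces. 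This is harmless, however: the commuting square you already use for injectivity exhibits $\eta^\Es_\Xs=(t^{\Xs^{\Gamma(\Es)}}_{AB})^{-1}\circ\Eg^{\Xs_A}\circ t_{AB}$ as a composition of homeomorphisms onto open subsets, which (given surjectivity) is the paper's own route to the homeomorphism property.
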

\begin{proof} 
Since $\Os^{\Gamma(\Es)}=\Os$, we have $\Rs^{\Sigma(\Gamma(\Es))}=\Rs_\Os$ and $\eta_\Rs$ is the identity isomorphism of discrete pair groupoids. 

\medskip 

For a given $p_{AB}\in\Xs$, consider the evaluation map $\ev_{p_{AB}}:\Gamma(\Es)_{AB}\to \Es_{p_{AB}}$ given by $\sigma\mapsto \sigma(p_{AB})$. 
Making use of remark~\ref{rem: p}, we have a maximal pair subgroupoid $\Xs^p\subset\Xs$ of composable points $p_{CD}\in\Xs_{CD}$, with $CD\in\Rs^p:=\theta(\Xs^p)$ and hence we can uniquely extend the map $\ev_{p_{AB}}$ to the C*-category $\Gamma(\Es)|_{\Rs^p}$ as follows: 
\begin{equation*}
\ev_p:\Gamma(\Es)|_{\Rs^p}\to \Es^p, \quad \quad \sigma\mapsto \sigma(p_{CD})\in\Es_{p_{CD}}, 
\quad \quad \forall CD\in\Rs^p, \quad \quad \forall \sigma\in\Gamma(\Es)_{CD}.
\end{equation*} 
With some abuse of notation, we continue to denote in the same way 
$\ev_p:\Gamma(\Es)\to \Es^p$
the unique canonical ``extension to zero'' of the previous map, where now we define 
$\Es^p|_{CD}:=\{0_{p_{CD}}\}$, for all $CD\notin\Rs^p$. 

\medskip 

With the only possible extensions (to zero) of compositions identities and involutions, $\Es^p$ is a C*-category of rank less or equal to one, with $\Es^p|_{\Xs^p}=\Es|_{\Xs^p}$ of rank one, and $\ev_p:\Gamma(\Es)\to\Es^p$ is a $*$-functor. 

\medskip 

The one-dimensional C*-category $\Es^p$ is (non-canonically) isomorphic to the unique extension to zero of  the rank-one C*-category $!^\bullet(\CC)|_{\Xs^p}$, but any possible choice of a pair of such isomorphisms are related by a unique element of $\alpha\in\Aut_\Af(!^\bullet(\CC)|_{\Xs^p})$ and hence there exists a unique well-defined element in $[\Gamma(\Es);\CC]$, that with abuse of notation we denote as $[\ev_p]\in[\Gamma(\Es);\CC]$ and so $\eta_\Xs:p_{AB}\mapsto[\ev_p]_{AB}$ is perfectly well-defined. 

\medskip 

Since $\ev_p$ is a $*$-functor, from the already proved properties of formal composition in $\Xs$ and hence in $\Xs^{\Gamma(\Es)}$, we have that $\eta_\Xs:\Xs\to\Xs^{\Gamma(\Es)}$ is a $*$-functor between groupoids with $\theta^{\Gamma(\Es)}\circ \eta_\Xs=\eta_\Rs\circ\theta$. 

\medskip 

Since $\eta_\Xs$ is the composition $\Xs\xrightarrow{\ev}[\Gamma(\Es);\Es^p]\xrightarrow{}\Xs^{\Gamma(\Es)}$, where the second is a continuous quotient map, the continuity of $\eta_\Xs$ is following from the continuity of the map 
$\ev: p\mapsto \ev_p$ with respect to the weak topology induced by evaluations on $[\Gamma(\Es),\Es^p]$ and this is a consequence of the continuity, for all $\sigma\in\Gamma(\Es)$, 
of the map $p_{AB}\mapsto (\ev_{\sigma}\circ\ev)(p)=\ev_\sigma(\ev_{p_{AB}})=\ev_{p_{AB}}(\sigma)=\sigma(p_{AB})$. 

\medskip 

On diagonal Hom-sets, by the well-known Gel'fand evaluation isomorphism $\Eg^{\Xs_A}:\Xs_{AA}\simeq\Xs^{\Gamma(\Es)}_{AA}$, we already have that the restriction map $\Xs_{AA}\xrightarrow{\eta_\Xs}\Xs^{\Gamma(\Es)}_{AA}$ is a homeomorphism, for all $A\in\Os$. 
We need to prove the homeomorphism property for off-diagonal Hom-sets. 

\medskip 

Considering the commuting diagrams below with the respective source/target maps:
\begin{equation*}
\xymatrix{
\Xs_{AB} \ar[d]_{s^\Xs} \ar[rrr]^{\eta_\Xs} & & & \ar[d]^{s^{\Xs^{\Gamma(\Es)}}}\Xs_{AB}^{\Gamma(\Es)}
\\
\Xs_{AA} \ar[rrr]_{\Eg^{\Xs_A}} & & & \Xs_{AA}^{\Gamma(\Es)}
}, 
\quad\quad 
\xymatrix{
\Xs_{AB} \ar[d]_{t^\Xs} \ar[rrr]^{\eta_\Xs} & & & \ar[d]^{t^{\Xs^{\Gamma(\Es)}}}\Xs_{AB}^{\Gamma(\Es)}
\\
\Xs_{AA} \ar[rrr]_{\Eg^{\Xs_A}} & & & \Xs_{AA}^{\Gamma(\Es)}
}
\end{equation*}
since we already know that sources (respectively targets) are homeomorphisms with open subsets of $\Xs_{A}$ and $\Xs^{\Gamma(\Es)}_A$ and $\Eg^{\Gamma(\Es)_[AA]}$ is the Gel'fand evaluation homeomorphism, we obtain that the map $\eta_\Xs$ is a homeomorphism between open locally compact sets. 

\medskip 

Since for any two $\sigma_1,\sigma_2\in\Gamma(\Es)_{AB}$, we have  $\sigma_1-\sigma_2\in\ke(\eta^\Es_\Xs(p_{AB}))$ if and only if  $\sigma_1(p_{AB})=\sigma_2(p_{AB})\in\Es_{p_{AB}}$, we see that $\Omega^\Es$ is a well-defined map fibrewise acting between the Fell bundles and injective. 

\medskip 

A direct calculation shows that $\Omega^\Es$ is a $*$-functor: $\forall \sigma_1,\sigma_2,\sigma\in\Gamma(\Es)_{AB}, \ \rho\in\Gamma(\Es)_{BC}, \ p_{AB}\in\Xs_{AB}, \ p_{BC}\in\Xs_{BC}$, 
\begin{align}\notag
&\Omega^\Es(\sigma_1+\sigma_2 +\ke(\eta^\Es_\Xs(p_{AB})) 
=(\sigma_1+\sigma_2)(p_{AB}) =\Omega^\Es(\sigma_1 +\ke(\eta^\Es_\Xs(p_{AB}))+\Omega^\Es(\sigma_2 +\ke(\eta^\Es_\Xs(p_{AB})), 
\\ \label{eq: Omega-circ}
&\begin{aligned}
\Omega^\Es(\sigma\circ\rho +\ke(\eta^\Es_\Xs(p_{AC})) 
&=(\sigma\circ\rho)(p_{AC})
=\sigma(p_{AB})\circ\rho(p_{BC}) 
\\
&=\Omega^\Es(\sigma+\ke(\eta^\Es_\Xs(p_{BC})\circ\Omega^\Es(\rho+\ke(\eta^\Es_\Xs(p_{BC}))),
\end{aligned}
\\ \notag
&\Omega^\Es(\sigma^* + \ke(\eta^\Es_\Xs(p_{BA})))=\sigma^*(p_{BA})=\sigma(p_{AB})^*
=\Omega^\Es(\sigma+\ke(\eta^\Es_\Xs(p_{AB})))^*, 
\\ \notag 
&\Omega^\Es(\iota^{\Gamma(\Es)}_A + \ke(\eta^\Es_\Xs(p_{AA}))) =\iota^{\Gamma(\Es)}_A(p_{AA}) = \iota^\Es_{{p_{AA}}},  
\end{align}
and, in equation~\eqref{eq: Omega-circ} for points that are not composable, both terms are zero of the respective fibers.  

\bigskip 

To prove the continuity of $\Omega^\Es$, consider an arbitrary point $e\in\Es_{AB}$, by the saturation property of $\Es$ there exists at least one continuous section $\sigma\in\Gamma(\Es_{AB})$ such that $\sigma(\pi(e))=e$. 
By lemma~\ref{lem: lcpr} it is actually not restrictive to consider a section $\sigma\in\Gamma_o(\Es)_{AB}$
vanishing at infinity. 
Since tubular neighborhoods $\T^\Es_{\epsilon}(\sigma)\cap\pi^{-1}(\U)$, with $\U$ an open neighborhood of $\pi(e)\in\Xs$, are a base of neighborhoods of $e\in\Es$, consider the tubular neighborhood $\T^{\Es^{\Gamma(\Es)}}_{\epsilon}(\hat{\sigma})\cap\pi^{-1}(\U)$ around the point $\hat{\sigma}(\pi(e))\in\Es^{\Gamma(\Es)}$ determined by the Gel'fand transform $\hat{\sigma}$ of the element $\sigma\in\Gamma(\Es)_{AB}$. Notice that $\Omega^\Es(\hat{\sigma}(\pi(e)))=e$ and furthermore $\Omega^\Es(\T^{\Gamma(\Es)}_{\epsilon}(\hat{\sigma})\cap\pi^{-1}(\U))\subset\T^\Es_\epsilon(\sigma) \cap \pi^{-1}(\U)$, that gives the required continuity. 

\medskip 

Since both $\Es^{\Gamma(\Es)}$ and $\Es$ are line-bundles and fibrewise $\Omega^\Es$ is non-trivial, $\Omega^\Es$ is also surjective. From the already proved fibrewise injectivity, $\Omega^\Es$ is bijective and continous. Its inverse map is also continuous, since now the tubular base-neighborhoods are in bijective correspondence $\Omega^\Es(\T^{\Es^{\Gamma(\Es)}}_\epsilon)(\hat{\sigma})\cap\pi^{-1}(\U)=\T^\Es_\epsilon(\sigma)\cap\U$.  

\medskip 

In this way we got that $\Omega^\Es$ is an isomorphism of Fell bundles and  $\Eg=(\eta^\Es,\Omega^\Es)$ is an isomorphism of spaceoids. 

\medskip 

The natural transformation property of $\Eg$ is obtained in exactly the same way as in the proof of the commuting square in~\cite[page~597]{BCL11}. 
For every morphism in the category $\Tf$ of spaceoids   $(\Es^1,\pi^1,\Xs^1_{\theta^1,\Os^1})\xrightarrow{(f,F)}(\Es^2,\pi^2,\Xs^2_{\theta^2,\Os^2})$, we must show that $\Sigma_{\Gamma_{(f,F)}}\circ\Eg_{\Es^1}=\Eg_{\Es^2}\circ(f,F):\Es^1\to\Sigma(\Gamma(\Es^2))$. 
This means $(\lambda^{\Gamma_{(f,F)}},\Lambda^{\Gamma_{(f,F)}})\circ(\eta^{\Es^1},\Omega^{\Es^1})
=(\eta^{\Es^2},\Omega^{\Es^2})\circ(f,F)$
or, explicitly calculating compositions:  
\begin{equation*}
\left(\lambda^{\Gamma_{(f,F)}}\circ\eta^{\Es^1}, \ \Omega^{\Es^1}\circ(\eta^{\Es^1})^\bullet(\Lambda^{\Gamma_{(f,F)}}) \circ\Theta^{\Es^{\Gamma(\Es^2)}}_{\lambda^{\Gamma_{(f,F)}},\eta^{\Es^1}}\right)
=\left(\eta^{\Es^2}\circ f, \ F\circ f^\bullet(\Omega^{\Es^2}) \circ\Theta^{\Es^{\Gamma(\Es^2)}}_{\eta^{\Es^2},f}\right), 
\end{equation*} 
For all $p_{AB}\in\Xs^1$, for all $\sigma\in\Gamma(\Es^2)_{f_\Rs(AB)}$, we get:
\begin{align*}
&\lambda^{\Gamma_{(f,F)}}\circ\eta^{\Es^1}(p_{AB})
=\lambda^{\Gamma_{(f,F)}}([\ev_{(p_{AB})}]_{AB})
=[\ev_{(p_{AB})}\circ \Gamma_{(f,F)}]_{AB}, 
&&\eta^{\Es^2}\circ f(p_{AB})
=[\ev_{f_\Xs(p_{AB})}]_{f_{\Rs}(AB)}, 
\\
&\ev_{(p_{AB})}\circ \Gamma_{(f,F)}(\sigma)=\ev_{p_{AB}}(F(f^\bullet(\sigma)))=F(\sigma(f(p_{AB}))), 
&&\ev_{f_\Xs(p_{AB})}(\sigma)=\sigma(f(p_{AB})). 
\end{align*}
Since $F$ is a $*$-functor, if $F(e^*e)$ is non zero also $F(e)$ is nonzero, for all non-zero $e\in\Es$, and hence we have an isomorphism of full 1-C*-sub-categories $F(f^\bullet(\Es^2|_{\Xs^2_{f(p)}}))\subset \Es^1|_{\Xs^1_p}$ and considering its unique zero extension we obtain the required equality of the equivalence classes $[\ev_{(p_{AB})}\circ \Gamma_{(f,F)}]_{AB}=[\ev_{f_\Xs(p_{AB})}]_{f_{\Rs}(AB)}$. 

\medskip 

Finally notice that, by the previous calculation, we have, for all $\sigma+\ke(\lambda_\Xs^{\Gamma_{(f,F)}}\circ\eta_\Xs^{\Es^1}(p_{AB}))\in\Es^{\Gamma(\Es^2)}$: 
\begin{align*}
\Omega^{\Es^1}\circ(\eta^{\Es^1})^\bullet(\Lambda^{\Gamma_{(f,F)}}) 
&\circ 
\Theta^{\Es^{\Gamma(\Es^2)}}_{\lambda^{\Gamma_{(f,F)}},\eta^{\Es^1}}  \left(\sigma+\ke(\lambda_\Xs^{\Gamma_{(f,F)}}\circ\eta_\Xs^{\Es^1}(p_{AB}))_{\lambda_\Rs^{\Gamma_{(f,F)}}\circ\eta_\Rs^{\Es^1}(AB)}\right)_{p_{AB}}
\\
&=\Omega^{\Es^1}\circ(\eta^{\Es^1})^\bullet(\Lambda^{\Gamma_{(f,F)}})  
\left(\left(\sigma+\ke(\lambda_\Xs^{\Gamma_{(f,F)}}\circ\eta_\Xs^{\Es^1}(p_{AB}))_{\lambda_\Rs^{\Gamma_{(f,F)}}\circ\eta_\Rs^{\Es^1}(AB)}\right)_{\eta^{\Es^1}(p_{AB})}\right)_{p_{AB}} 
\\
&=\Omega^{\Es^1}\left(\Gamma_{(f,F)}(\sigma) +\ke(\eta^{\Es^1}_\Xs(p_{AB}))_{\eta^{\Es^1}_\Rs(AB)}\right)_{p_{AB}}
\\
&=\Omega^{\Es^1}\left(F\circ(f^\bullet(\sigma)) +\ke(\eta^{\Es^1}_\Xs(p_{AB}))_{\eta^{\Es^1}_\Rs(AB)}
\right)_{p_{AB}}
\\
&=F\Big(f^\bullet(\sigma)(p_{AB})\Big)
\\
&=F(\sigma(f(p_{AB}))) 
\\
&=F\circ f^\bullet(\Omega^{\Es^2}) 
\left(
\left(
\sigma +\ke(\eta_\Xs^{\Es^2}\circ f_\Xs(p_{AB}))_{\eta_\Rs^{\Es^2}\circ f_\Rs(AB)}
\right)_{f(p_{AB})}
\right)_{p_{AB}} 
\\
&=F\circ f^\bullet(\Omega^{\Es^2})\circ \Theta^{\Es^{\Gamma(\Es^2)}}_{\eta^{\Es^2},f}
\left(
\sigma +\ke(\eta_\Xs^{\Es^2}\circ f_\Xs(p_{AB}))_{\eta_\Rs^{\Es^2}\circ f_\Rs(AB)}
\right)_{p_{AB}} 
\\
&=F\circ f^\bullet(\Omega^{\Es^2})\circ \Theta^{\Es^{\Gamma(\Es^2)}}_{\eta^{\Es^2},f}
\left(
\sigma +\ke(\lambda_\Xs^{\Gamma_{(f,F)}}\circ\eta_\Xs^{\Es^1}(p_{AB}))_{\lambda_\Rs^{\Gamma_{(f,F)}}\circ\eta_\Rs^{\Es^1}(AB)}
\right)_{p_{AB}}. 
\end{align*}
This completes our proof of naturality for the isomorphism $(\eta^\Es,\Omega^\Es)$. 
\end{proof}

\section{Gel'fand Na\u\i mark Duality for Non-full Commutative C*-categories} \label{sec: 8}

Finally we get our non-full extension of the duality~\cite[theorem~6.4]{BCL11}. 
\begin{theorem}
There {\color{brown} is} a categorical duality $\xymatrix{\Tf \rtwocell^{\Gamma}_{\Sigma}{'} & \Af}$ with units $\id_\Af\xrightarrow{\Gg}\Gamma\circ\Sigma$ and $\id_\Tf\xrightarrow{\Eg}\Sigma\circ\Gamma$.
\end{theorem}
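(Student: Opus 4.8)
The plan is to recognize that this final statement is no longer a computation but a bookkeeping assembly: it follows immediately by matching the four structural results already established against the definition of duality recalled at the start of section~\ref{sec: 2}. Recall that a duality between $\Af$ and $\Tf$ is precisely a pair of contravariant functors $\Tf\xrightarrow{\Gamma}\Af$ and $\Af\xrightarrow{\Sigma}\Tf$ together with natural isomorphisms $\Ig_\Af\xRightarrow{\Gg}\Gamma\circ\Sigma$ and $\Ig_\Tf\xRightarrow{\Eg}\Sigma\circ\Gamma$. So the entire proof consists in exhibiting these four pieces of data, each of which has been produced in one of the preceding sections, and then invoking the definition verbatim.

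First I would cite the two functoriality results to supply the contravariant functors. The section functor $\Tf\xrightarrow{\Gamma}\Af$ was shown to be a well-defined contravariant functor in the proposition of section~\ref{sec: 4}, and the spectrum functor $\Af\xrightarrow{\Sigma}\Tf$ was shown to be a well-defined contravariant functor in proposition~\ref{prop: spectral}. This already provides the pair of functors required by the definition. Next I would supply the two natural isomorphisms directly from the two main theorems of the intervening sections: theorem~\ref{th: Gg} establishes that the Gel'fand transform $\Cs\mapsto\Gg_\Cs$ constitutes a natural isomorphism $\Ig_\Af\xrightarrow{\Gg}\Gamma\circ\Sigma$, while theorem~\ref{th: Eg} establishes that the evaluation transform $(\Es,\pi,\Xs_{\theta,\Os})\mapsto\Eg_\Es$ constitutes a natural isomorphism $\Ig_\Tf\xrightarrow{\Eg}\Sigma\circ\Gamma$. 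Substituting these four ingredients into the definition of duality completes the argument, with $\Gg$ and $\Eg$ serving as the two units.

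The main point to emphasize is that there is essentially no remaining obstacle at this level of the argument: every genuine difficulty has already been absorbed into the earlier constructions. In particular, the nontrivial surjectivity (via saturation, a partition-of-unity argument on the Alexandroff compactification, and the diagonal Gel'fand--Na\u\i mark isomorphism) lives in theorem~\ref{th: Gg}, and the delicate verification that $\Omega^\Es$ is a homeomorphism of total spaces compatible with the vanishing-at-infinity, non-full structure lives in theorem~\ref{th: Eg}, where lemma~\ref{lem: lcpr} on the coincidence of the $\Gamma$- and $\Gamma_o$-tubular topologies is used. The only care required here is to confirm that the natural isomorphisms produced by those theorems are the same objects named in the duality definition, which they are by construction; hence the conclusion is immediate.
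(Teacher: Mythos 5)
Your proposal is correct and matches the paper's own proof, which simply observes that the result follows from theorem~\ref{th: Gg} and theorem~\ref{th: Eg} (with the contravariant functoriality of $\Gamma$ and $\Sigma$ already in hand from the earlier propositions). Your version just spells out the bookkeeping the paper leaves implicit.
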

\begin{proof}
The proof is already following from theorem~\ref{th: Gg} and theorem~\ref{th: Eg}. 
\end{proof} 

\section{Spectral Theorem for Non-full Imprimitivity Hilbert C*-Bimodules}\label{sec: 9}

As an application of proposition~\ref{prop: spectral}, our spectral theorem for non-full C*-categories already entails a generalization of the spectral theorem for imprimitivity C*-modules over commutative (unital) C*-algebras given in~\cite{BCL08}. 
The result is an explicit form of Rieffel's correspondence~\cite{BGR77} (see also~\cite[theorem~2.1]{KQW24} for a recent exposition) for certain types of Hilbert C*-bimodules.  

\medskip 

First of all, let us specify the type of Hilbert C*-bimodules that can be treated. 
\begin{definition}
An \emph{non-full imprimitivity Hilbert C*-bimodule} ${}_\As\Ms_\Bs$ over the pair of unital commutative \hbox{C*-algebras} $\As$ and $\Bs$, consists of a left-$\As$, right-$\Bs$ bimodule $\Ms$ that is equipped with a left $\As$-valued inner product $(x,y)\mapsto{}_\As\ip{x}{y}$ and a right $\Bs$-valued inner product $(x,y)\mapsto\ip{x}{y}_\Bs$, for all $x,y\in\Ms$, such that: ${}_\As\ip{x}{y}\cdot z=x\cdot\ip{y}{z}_\Bs$, for all $x,y,z\in\Ms$ and $\Ms$ is complete in the norms induced by the inner products. 
\end{definition} 
Notice that as soon as the two inner products above are full, the above definition reduces to the usual imprimitivity case, for which well-known spectral results are available. 

\medskip 

We need to make essential usage of a slight generalization, to the non-full situation, of the linking C*-category introduced in~\cite{BGR77} for imprimitivity C*-bimodules. 
\begin{lemma}\label{lem: linking}
To every non-full imprimitivity Hilbert C*-bimodule, ${}_\As\Ms_{\Bs}$ over a pair of unital commutative C*-algebras $\As,\Bs$, 
we can always associate a two-object \emph{linking non-full C*-category} $\Cs({}_\As\Ms_\Bs)$ as follows: 
\begin{equation*}
\Cs({}_\As\Ms_\Bs):=
\begin{bmatrix}
\As & \Ms 
\\
\Ms^* & \Bs
\end{bmatrix}, 
\quad \Cs({}_\As\Ms_\Bs)_{AA}:=\As, \quad \Cs({}_\As\Ms_\Bs)_{AB}:=\Ms, \quad 
\Cs({}_\As\Ms_\Bs)_{BA}:=\Ms^*, \quad \Cs({}_\As\Ms_\Bs)_{BB}:=\Bs, 
\end{equation*}
where ${}_\Bs{\Ms^*}_{\As}:=\{x^* \ | \ x\in\Ms\}$ denotes the Rieffel dual of ${}_\As\Ms_\Bs$ as a non-full imprimitivity left-$\Bs$, right-$\As$ C*-bimodule with actions $b\cdot x^*\cdot a:=(axb)^*$, for all $a,b,x\in\As\times\Bs\times\Ms$ and inner products given by 
${}_\Bs\ip{x^*}{y^*}:=\ip{x}{y}_\Bs^*$ and $\ip{x^*}{y^*}_\As:={}_\As\ip{x}{y}^*$, for all $x,y\in\Ms$; 
and where we define: 
\begin{itemize}
\item
composition via actions of C*-bimodules and $x^*\circ y:=\ip{x}{y}_\Bs$, 
$x\circ y^*:={}_\As\ip{x}{y}$, for all $x,y\in\Ms$
\item
involution as involution in the C*-algebras $\As$, $\Bs$ and by $x\mapsto x^*$, $x^*\mapsto x$, when $x\in\Ms$, 
\item
identities given as: $\iota^{\Cs(\Ms)}_A:=1_\As$ and $\iota^{\Cs(\Ms)}_B:=1_\Bs$, 
\item 
norm as already defined in each of the Hom-sets. 
\end{itemize}
\end{lemma}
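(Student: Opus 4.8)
The plan is to verify directly that the $2\times 2$ construction $\Cs({}_\As\Ms_\Bs)$ satisfies every clause in the definition of a (small commutative) C*-category, treating the two objects $A,B$ and the four Hom-sets $\Cs_{AA}=\As$, $\Cs_{AB}=\Ms$, $\Cs_{BA}=\Ms^*$, $\Cs_{BB}=\Bs$. Because there are only two objects, each axiom splits into finitely many combinatorial cases indexed by the entries involved, and the strategy is to dispose of the diagonal cases using that $\As,\Bs$ are already commutative unital C*-algebras, and the off-diagonal cases using the Hilbert-module structure of $\Ms$ together with the single genuinely new ingredient, the imprimitivity compatibility ${}_\As\ip{x}{y}\,z = x\,\ip{y}{z}_\Bs$.

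First I would check that $\Cs({}_\As\Ms_\Bs)$ is an involutive complex algebroid. Complex bilinearity of each composition is immediate from the bilinearity of the two module actions and of the two inner products, and unitality of $1_\As,1_\Bs$ follows from the standard fact that a Hilbert module over a unital algebra is unital (e.g.\ $\ip{x-x\cdot 1_\Bs}{x-x\cdot 1_\Bs}_\Bs=0$, and symmetrically on the left). For associativity, every case in which at most one factor is off-diagonal reduces to associativity of the $\As$- and $\Bs$-actions, valid in any bimodule; the only case that genuinely requires the imprimitivity axiom is $(x\circ y^*)\circ z={}_\As\ip{x}{y}\cdot z$ versus $x\circ(y^*\circ z)=x\cdot\ip{y}{z}_\Bs$ for $x,y,z\in\Ms$, which coincide precisely by the compatibility condition, the adjoint case being symmetric. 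Conjugate-linearity, the relation $(\cdot)^{**}=\id$, and antimultiplicativity $(u\circ v)^*=v^*\circ u^*$ of the involution follow case by case from the Rieffel-dual conventions on $\Ms^*$.

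For the metric clauses, the diagonal Hom-sets $\As,\Bs$ are complete C*-algebras and $\Ms$ (hence $\Ms^*$) is complete by hypothesis. Submultiplicativity $\|x\circ y\|\le\|x\|\,\|y\|$ is the C*-inequality on the diagonal, the boundedness of the actions when exactly one factor is diagonal, and the Cauchy--Schwarz inequality for the inner products in the two remaining cases $x\circ y^*={}_\As\ip{x}{y}$ and $x^*\circ y=\ip{x}{y}_\Bs$. The C*-identity on $\Cs_{AB}$ reads $\|x^*\circ x\|=\|\ip{x}{x}_\Bs\|=\|x\|^2$, which holds once we take the norm on $\Ms$ to be the one induced by the right inner product; positivity of $x^*\circ x=\ip{x}{x}_\Bs\in\Bs$ and of $x\circ x^*={}_\As\ip{x}{x}\in\As$ is the positivity of Hilbert-module inner products.

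The hard part will be the C*-identity on the dual Hom-set $\Cs_{BA}=\Ms^*$, namely $\|x\circ x^*\|=\|{}_\As\ip{x}{x}\|=\|x\|^2$: since the involution in any C*-category is automatically isometric (from $\|x\|^2=\|x^*\circ x\|\le\|x^*\|\,\|x\|$ and its symmetric counterpart one gets $\|x^*\|=\|x\|$), this \emph{forces} the coincidence $\|{}_\As\ip{x}{x}\|=\|\ip{x}{x}_\Bs\|$ of the left and right norms on $\Ms$. I would establish it by restriction of scalars: passing to the closed involutive ideals $\I_A:=\cj{\spa}\{{}_\As\ip{x}{y}\}\subset\As$ and $\I_B:=\cj{\spa}\{\ip{x}{y}_\Bs\}\subset\Bs$ of remark~\ref{rem: -imp}, one checks (via approximate units $e_i$ of these ideals and the non-degeneracy identities $e_i\cdot x\to x$) that $\Ms$ becomes a genuinely \emph{full} imprimitivity $\I_A$--$\I_B$-bimodule, for which the equality of the two norms is the classical result \cite{BGR77}; since an ideal is isometrically embedded, $\|{}_\As\ip{x}{x}\|_\As=\|{}_\As\ip{x}{x}\|_{\I_A}=\|\ip{x}{x}_\Bs\|_{\I_B}=\|\ip{x}{x}_\Bs\|_\Bs$. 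Granting this coincidence, the remaining axioms are routine, and commutativity of $\As,\Bs$ yields that $\Cs({}_\As\Ms_\Bs)$ is a non-full commutative C*-category whose off-diagonal Hom-set bimodule is exactly ${}_\As\Ms_\Bs$, as needed for the spectral theorem that follows.
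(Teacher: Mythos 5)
Your proposal is correct, and its overall skeleton (algebroid axioms first, then the metric axioms) necessarily matches the paper's; the difference lies in how the one non-trivial analytic point is settled. You correctly isolate it: the C*-identity on $\Ms^*$ forces $\|{}_\As\ip{x}{x}\|_\As=\|\ip{x}{x}_\Bs\|_\Bs$, equivalently the well-definedness of a single norm $\|x\|^2_\Ms=\|x^*\circ x\|_\Bs=\|x\circ x^*\|_\As$. The paper disposes of this by asserting that the direct computation of~\cite[remark~2.14]{BCL08}, written there for the full case, goes through verbatim without fullness, and it treats all algebraic verifications as immediate. You instead reduce the non-full case to the full one: restrict scalars to the inner-product ideals of remark~\ref{rem: -imp}, check that $\Ms$ is an honest full imprimitivity bimodule over them, invoke the classical norm coincidence of~\cite{BGR77}, and use that closed ideals of C*-algebras embed isometrically. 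This buys conceptual clarity---it exhibits a non-full imprimitivity bimodule as a full one over smaller, generally non-unital, C*-algebras, which is exactly the Rieffel-correspondence viewpoint the paper adopts in section~\ref{sec: 9}---at the price of verifying the hypotheses of the classical theorem: besides the non-degeneracy $e_i\cdot x\to x$ that you mention, you must also check the adjointability conditions $\ip{a\cdot x}{y}_\Bs=\ip{x}{a^*\cdot y}_\Bs$ and their left-handed mirror, which are part of the classical definition of imprimitivity bimodule; they do follow from ${}_\As\ip{x}{y}\cdot z=x\cdot\ip{y}{z}_\Bs$ by multiplying by an arbitrary $w\in\Ms$ and polarizing, so this is a detail to add, not a gap. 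Your preliminary remark that isometry of the involution in any C*-category makes the norm coincidence unavoidable is correct and is left implicit in the paper.
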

\begin{proof} 
The algebraic properties of the involutive category $\Cs({}_\As\Ms_\Bs)_{AB}$ are immediate from the definition of composition, identity and involution (the fullness condition is not relevant). 

\medskip 

We recall that, with the same argument presented in~\cite[remark 2.14]{BCL08}, on the non-full imprimitivity bimodule ${}_\As\Ms_\Bs$ there is a well-defined norm $\|x\|^2_\Ms:=\|x^*\circ x\|_\Bs=\|x\circ x^*\|_\As$, for all $x\in\Ms$ and hence the completeness of $\Ms$ in each one of the inner products are equivalent. The positivity conditions $x^*\circ x\in\Bs_+$ and $x\circ x^*\in\As_+$, for all $x\in\Ms$, are already part of the definition of C*-algebra valued inner product of our non-full imprimitivity Hilbert C*-bimodule. The C*-property is a consequence of the above defined norm on $\Ms$. 
\end{proof}

\begin{theorem}
Every non-full imprimitivity Hilbert C*-module ${}_\As\Ms_\Bs$ over commutative unital C*-algebras is canonically isomorphic to the family of continuous sections vanishing at infinity of a complex line bundle over the locally compact Hausdorff graph of a local homeomorphism between open sets of the Gel'fand spectra of the two C*-algebras.
\end{theorem}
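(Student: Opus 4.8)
The plan is to apply the non-full Gel'fand--Na\u\i mark duality of Section~\ref{sec: 8} to the linking C*-category of the bimodule. First I would pass from ${}_\As\Ms_\Bs$ to the two-object linking non-full C*-category $\Cs:=\Cs({}_\As\Ms_\Bs)$ constructed in Lemma~\ref{lem: linking}; since its diagonal Hom-sets are the commutative unital C*-algebras $\Cs_{AA}=\As$ and $\Cs_{BB}=\Bs$, the category $\Cs$ is a small commutative C*-category, hence an object of $\Af$, with off-diagonal Hom-set $\Cs_{AB}=\Ms$.

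Next I would apply the spectrum functor $\Sigma$ (Proposition~\ref{prop: spectral}) to obtain the spectral spaceoid $\Sigma(\Cs)=(\Es^\Cs,\pi^\Cs,\Xs^\Cs_{\theta^\Cs,\Os^\Cs})$ and read off the geometry of its $AB$-restriction. By Proposition~\ref{prop: X-delta} the diagonal bases are the Gel'fand spectra, $\Xs^\Cs_A\simeq\Sp(\As)$ and $\Xs^\Cs_B\simeq\Sp(\Bs)$; by Proposition~\ref{prop: X-topology} the off-diagonal base $\Xs^\Cs_{AB}$ is an open, locally compact Hausdorff subspace of $\Bs^\Cs_{AB}$, and the source/target maps $\Xs^\Cs_A\xleftarrow{t^\Cs_{AB}}\Xs^\Cs_{AB}\xrightarrow{s^\Cs_{AB}}\Xs^\Cs_B$ are homeomorphisms onto open subsets. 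Thus $\Xs^\Cs_{AB}$ is canonically identified, via $p\mapsto(t^\Cs_{AB}(p),s^\Cs_{AB}(p))$, with the graph of the (partial) homeomorphism $s^\Cs_{AB}\circ(t^\Cs_{AB})^{-1}$ between the open sets $t^\Cs_{AB}(\Xs^\Cs_{AB})\subset\Sp(\As)$ and $s^\Cs_{AB}(\Xs^\Cs_{AB})\subset\Sp(\Bs)$. Over this graph the $AB$-portion $\Es^\Cs_{AB}\to\Xs^\Cs_{AB}$ of the spaceoid is a saturated rank-one Fell bundle, i.e.~a complex line bundle (Remark~\ref{rem: Var}).

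Then I would invoke the Gel'fand natural isomorphism $\Gg_\Cs:\Cs\xrightarrow{\sim}\Gamma(\Sigma(\Cs))$ of Theorem~\ref{th: Gg}, which is an isomorphism of C*-categories, and restrict it to the $AB$-Hom-set. By Definition~\ref{def: Gamma} the target Hom-set is $\Gamma(\Sigma(\Cs))_{AB}=\Gamma_o(\Es^\Cs_{AB})$, the continuous sections of $\Es^\Cs_{AB}$ vanishing at infinity, so $\Gg_\Cs$ yields a canonical isometric isomorphism $\Ms=\Cs_{AB}\xrightarrow{\sim}\Gamma_o(\Es^\Cs_{AB})$. Because $\Gg_\Cs$ is a $*$-functor on the whole linking category, this restriction automatically intertwines all the surrounding structure: the left $\As$-action (composition $\Cs_{AA}\circ\Cs_{AB}$) and right $\Bs$-action (composition $\Cs_{AB}\circ\Cs_{BB}$) on $\Ms$, together with the two inner products $x\circ y^*={}_\As\ip{x}{y}$ and $x^*\circ y=\ip{x}{y}_\Bs$, are carried to the corresponding pointwise module actions and fibrewise inner products on sections, the diagonal actions becoming multiplication by the pullbacks $C(\Sp(\As))\to C(t^\Cs_{AB}(\Xs^\Cs_{AB}))$ and $C(\Sp(\Bs))\to C(s^\Cs_{AB}(\Xs^\Cs_{AB}))$ under the Gel'fand isomorphisms $\As\simeq\Gamma(\Es^\Cs_{AA})$, $\Bs\simeq\Gamma(\Es^\Cs_{BB})$ of Proposition~\ref{prop: X-delta}. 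This gives exactly the asserted description of ${}_\As\Ms_\Bs$.

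The routine bookkeeping (the four algebraic operations and the norm) is entirely subsumed by the fact that $\Gg_\Cs$ is already known to be a $*$-functor isomorphism, so the real content is geometric and was established in Section~\ref{sec: 5}. The only point needing care, and the main obstacle, is the faithful translation between the intrinsic groupoid description of the base $\Xs^\Cs_{AB}$ produced by the spectral construction and the classical statement as the graph of a homeomorphism between open subsets of $\Sp(\As)$ and $\Sp(\Bs)$: one must check that the images $t^\Cs_{AB}(\Xs^\Cs_{AB})$ and $s^\Cs_{AB}(\Xs^\Cs_{AB})$ coincide with the open sets dual to the ideals ${}_{\As}\ip{\Ms}{\Ms}$ and $\ip{\Ms}{\Ms}_{\Bs}$ generated by the inner products (cf.~the discussion following Remark~\ref{rem: -imp}), so that the ``non-fullness'' of the bimodule is precisely reflected in the possibly proper open inclusions of these sets.
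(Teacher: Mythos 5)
Your proposal is correct and takes essentially the same route as the paper: construct the linking C*-category of Lemma~\ref{lem: linking}, apply the spectrum functor (Proposition~\ref{prop: spectral}) together with the geometric description of the base from Propositions~\ref{prop: X-delta} and~\ref{prop: X-topology}, and then restrict the Gel'fand natural isomorphism of Theorem~\ref{th: Gg} to the $AB$-Hom-set to obtain the canonical isomorphism $\Ms\simeq\Gamma_o(\Es^{\Cs(\Ms)}_{AB})$. The only divergence is that the final check you single out as the main obstacle --- identifying $t^\Cs_{AB}(\Xs^\Cs_{AB})$ and $s^\Cs_{AB}(\Xs^\Cs_{AB})$ with the open sets dual to the ideals ${}_\As\ip{\Ms}{\Ms}$ and $\ip{\Ms}{\Ms}_\Bs$ --- is not required by the statement as given, and the paper accordingly relegates it to a remark after the proof rather than including it.
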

\begin{proof}
For the non-full Hilbert C*-bimodule ${}_\As\Ms_\Bs$, we construct as in lemma~\ref{lem: linking} the linking non-full commutative C*-category $\Cs({}_\As\Ms_\Bs)$. 

\medskip 

We now apply to $\Cs({}_\As\Ms_\Bs)$ our spectral theorem~\ref{prop: spectral} for commutative non-full C*-categories: 
in this way we obtain a spectral spaceoid $\Sigma(\Cs({}_\As\Ms_\Bs))$ that is a Fell line bundle $\Es^{\Cs(\Ms)}$ over a topological groupoid $\Xs^{\Cs(\Ms)}$, whose $AB$-Hom-set $\Es^{\Cs(\Ms)}_{AB}$ is a complex line-bundle over a unique span of homeomorphism with open subsets of the compact Hausdorff Gel'fand spectra of the C*-algebras $\As$ and $\Bs$: 
\begin{equation*}
\Sp(\As)\supset{\Xs^{\Cs(\Ms)}_{A}}\xleftarrow{t^{\Xs^{\Cs(\Ms)}}}\Xs^{\Cs(\Ms)}_{AB}\xrightarrow{s^{\Xs^{\Cs(\Ms)}}}\Xs^{\Cs(\Ms)}_{B}\subset\Sp(\Bs).
\end{equation*}
From our previous theorem~\ref{th: Gg}, we have that the restriction of the Gel'fand transform $\Gg^{\Cs(\Ms)}$ of the non-full commutative C*-category $\Cs(\Ms)$ to the $AB$-Hom-set provides the required canonical isomorphism of non-full Hilbert C*-bimodules between ${}_\As\Ms_\Bs$ and the non-full imprimitivity Hilbert C*-bimodule $\Gamma(\Es^{\Cs(\Ms)}_{AB})$ of sections of the complex line bundle $\Es^{\Cs(\Ms)}_{AB}\xrightarrow{\pi^{\Cs(\Ms)}}\Xs^{\Cs(\Ms)}_{AB}$ vanishing at infinity. 
\end{proof} 
Although we are not going to pursue this topic here, we notice that one can directly utilize the technical  details in the proof of proposition~\ref{prop: X-topology} to explicitly construct the unique homeomorphism between open subsets of the Gel'fand spectra of the two unital C*-algebras of the non-full imprimitivity bimodule ${}_{\As}\Ms_\Bs$. 

\section{Outlook} \label{sec: 10}

Some possible further generalizations of the present work might be considered such as the: 
\begin{itemize}
\item
elimination of the ``object-bijectivity'' condition for the morphisms in the category $\Af$ of small commutative C*-categories and respectively the bijective nature of the map $f_\Rs$ in the definition of morphism between the base spaces of (non-full) topological spaceoids;

\item
usage of ``non-unital'' commutative C*-categories (see~\cite{Mi02}), and respectively non-unital Fell bundles in the definition of total space of the spaceoids, a situation that corresponds to the usual extension of Gel'fand-Na\u\i mark duality to the case of non-unital commutative C*-algebras (and hence locally compact Hausdorff spaces as spectra); 
\item 
study of possible alternative descriptions of the non-full spectral spaceoids defined here, as already undertaken in the full case in~\cite{BCL13}, or even more intriguingly, introducing suitable universal ``Alexandroff zerifications'' of our non-full spaceoids, providing a unified description of spectral theory of non-unital and unital C*-categories. 
\end{itemize}

It is expected that general results obtainable using non-commutative spectral spaceoids for non-commutative C*-algebras, see~\cite{BCP19} and forthcoming work, will subsume the present material on spectral theory of C*-categories (since to every C*-category one can associate its non-commutative enveloping C*-algebra and simply study its non-commutative spectral spaceoid). 

\medskip 

Of separate independent interest is the usage of the techniques and insight acquired here for the development of a more general spectral theory for non-full Hilbert C*-bimodules along the lines presented in~\cite{BCL08}.  

\medskip 

We did not even enter here into the extremely intriguing study of ``vertical-categorification'' of the notion of spectral spaceoid. This will have to wait for a suitable definition of the notion of commutative strict/weak higher C*-category (see~\cite{BCLS20} for a possible view on strict $n$-C*-categories). 


\bigskip 

\emph{Notes and Acknowledgments:}  
P.B.~thanks Starbucks Coffee (Langsuan, Jasmine City, \hbox{T-square}, Gaysorn Plaza, USBII Tower) 
where he spent most of the time dedicated to this research project; he also thanks Fiorentino Conte of ``The Melting Clock'' for the great hospitality during many crucial on-line meetings.  

{\small

}

\end{document}